\theoremstyle{definition}
\newtheorem{definition}{Definition}[section]
\newtheorem{remark}[definition]{Remark}
\newtheorem{example}[definition]{Example}
\newtheorem{theorem}[definition]{Theorem}
\newtheorem{proposition}[definition]{Proposition}
\newtheorem{lemma}[definition]{Lemma}
\newtheorem{corollary}[definition]{Corollary}
\newcommand{\ifff}{\emph{ i\,f\,f }\;} 
\newcommand{\supp}{\ensuremath{{\rm{supp}}}}
\newcommand{\card}{\ensuremath{{\rm{card}}}}
\newcommand{\ub}{\ensuremath{\mathcal{UB}}}
\newlist{def-enumerate}{enumerate}{2}
\newlist{thm-enumerate}{enumerate}{2}
\newlist{set-enum}{enumerate}{1}
\newlist{axiom-enum}{enumerate}{2}
\setlist[def-enumerate,1]{label=\textbf{\arabic*}.,leftmargin=*,labelindent=\parindent}
\setlist[def-enumerate,2]{label=\textbf{(\alph*)},leftmargin=*,labelindent=\parindent}
\setlist[thm-enumerate]{label=(\emph{\roman*}),leftmargin=*,labelindent=\parindent,align=right}
\setlist[set-enum]{label=\textbf{Set \arabic*},leftmargin=*}
\setlist[axiom-enum,1]{label=\textbf{Axiom \arabic*}, leftmargin=*}
\setlist[axiom-enum,2]{label=\textbf{Axiom (\theenumi.\arabic*)}, leftmargin=*}
\begin{document}

\thispagestyle{empty}
\pagenumbering{roman}
\begin{center}
  \mbox{}
  \vspace{10mm}\\
  {\Huge Completeness of Ordered Fields}\\
  \vspace{10mm}
  {\large By }\\
  {\Large
    \vspace{4mm}
    James Forsythe Hall
  }\\
  \vspace{100mm}
  {\small
    SUBMITTED IN PARTIAL FULFILLMENT OF THE REQUIREMENTS\\
    FOR THE DEGREE OF BACHELOR OF SCIENCE IN MATHEMATICS\\
    AT
    CALIFORNIA POLYTECHNIC STATE UNIVERSITY\\
    SAN LUIS OBISPO\\
    DECEMBER 2010
  }
  \\
  {\scriptsize \copyright 2010 James Hall}
\end{center}
  \vspace{10mm}
  \textbf{Keywords} Totally ordered fields $\cdot$ 
  Dedekind fields $\cdot$
  Complete fields $\cdot$
  Infinitesimals $\cdot$
  Nonstandard analysis $\cdot$
  Nonstandard extension $\cdot$
  Archimedean fields $\cdot$
  Saturation principle $\cdot$
  Power-series fields $\cdot$
  Hahn field $\cdot$
  Robinson asymptotic numbers
  \mbox{}\\
  \textbf{Mathematics Subject Classification (2010)} Primary: 12J15; Secondary: 03H05 $\cdot$ 06A05 $\cdot$ 12J25 $\cdot$ 26E35\\
  {\renewcommand{\thefootnote}{}
    \footnotetext{\scriptsize{This is a senior project done under the supervision of Todor D. Todorov}}
  }

\newpage
\onehalfspacing
\thispagestyle{plain}


\newpage
\thispagestyle{plain}
\section*{Introduction}
In most textbooks, the set of real numbers $\mathbb{R}$ is commonly taken to be a totally ordered Dedekind complete field. Following from this definition, one can then establish the basic properties of $\mathbb{R}$ such as the Bolzano-Weierstrass property, the Monotone Convergence property, the Cantor completeness of $\mathbb{R}$ (Definition~\ref{D: completeness}), and the sequential (Cauchy) completeness of $\mathbb{R}$. The main goal of this project is to establish the equivalence of the preceding properties, in the setting of a totally ordered Archimedean field (Theorem~\ref{T: BIG ONE}), along with a less well known algebraic form of completeness (Hilbert completeness, Definition~\ref{D: completeness}) and a property from non-standard analysis which roughly states that every finite element from the non-standard extension of the field is ``infinitely close'' to some element from the field (Leibniz completeness, Definition~\ref{D: completeness}). (The phrase infinitely close may be off-putting to some as it is sometimes associated with mathematics lacking in rigour, but in section~\S\ref{S: inf} we properly define this terminology and in section~\S\ref{S: examples 1} and \S\ref{S: examples 2} we provide several examples of fields with non-trivial infinitesimal elements.)

As is usual in mathematics, we continued our research past that of Archimedean fields to determine how removing the assumption that the field is Archimedean affects the equivalency of the properties listed in Theorem~\ref{T: BIG ONE}. What we found is that all but three of those properties are false in non-Archimedean fields: Hilbert completeness, Cantor completeness and sequential completeness. Furthermore, we found that the last two of these three properties are no longer equivalent; rather, the latter is a necessary, but not sufficient, condition for the former (see Theorem~\ref{T: non-arch completeness} and Example~\ref{E: robinson asymptotic}).

One application of Theorem~\ref{T: BIG ONE} is given in section~\S\ref{S: axioms}, where we establish eight different equivalent collections of axioms, each of which can be used as axiomatic definitions of the set of real numbers. Another application 
is an alternative perspective of classical mathematics that results from the equivalency of Dedekind completeness (Definition~\ref{D: completeness}) and the non-standard property outlined in the first paragraph above (this property is defined in Definition~\ref{D: completeness}).



\onehalfspacing
\pagenumbering{arabic}
\pagestyle{fancy}
\renewcommand{\sectionmark}[1]{\markright{\thesection.\ #1}}
\fancyhf{}
\fancyhead[L]{\rightmark}
\fancyhead[R]{\bfseries \thepage}
\renewcommand{\headrulewidth}{0.0pt} 
\setcounter{page}{1}

\section{Orderable Fields}\label{S: orderable fields}
In this section we recall the main definitions and properties of totally ordered fields. For more details, we refer to Lang~\cite{lang} and van der Waerden~\cite{waerden}.
\begin{definition}[Orderable Field]\label{D: Orderable fields}
  A field $\mathbb K$ is \emph{orderable} if there exists a non-empty $\mathbb{K}_+ \subset\mathbb{K}$ such that 
  \begin{def-enumerate}
    \item $0\not\in\mathbb{K}_+$
    \item $(\forall x,y\in\mathbb{K}_+)(x + y\in\mathbb{K}_+ \emph{ and } xy\in\mathbb{K}_+)$ 
    \item $(\forall x\in\mathbb{K}\setminus\{0\})(x\in\mathbb{K}_+\emph{ or } -x\in\mathbb{K}_+)$
  \end{def-enumerate}
\end{definition}

  Provided that $\mathbb K$ is orderable, we can fix a set $\mathbb{K}_+$ that satisfies the properties given above and generate a strict order relation on $\mathbb K$ by $x<_{\mathbb{K}_+}y$ \ifff$y-x\in\mathbb{K}_+$. Further, we can define a total ordering (i.e. reflexive, anti-symmetric, transitive, and total) on $\mathbb K$ by $x\le_{\mathbb K_+}y$ \ifff $x<_{\mathbb K_+}y\emph{ or } x=y$. 

\begin{definition}[Totally Ordered Field]
  Let $\mathbb K$ be an orderable field and $\mathbb K_+\subset\mathbb K$. Then using the relation $\le_{\mathbb K_+}$ generated by $\mathbb K_+$, as noted above, we define $(\mathbb K,\le_{\mathbb K_+})$ to be a \emph{totally ordered field}. As well, with this ordering mind, we define the \emph{absolute value} of $x\in\mathbb{K}$ as $|x|=:\max(-x,x)$.
\end{definition}

For simplicity, when $\mathbb{K_+}$ is clear from the context we shall use the more standard notation $x\le y$ and $x<y$ in place of $x\le_{\mathbb{K}_+}y$ and $x<_{\mathbb{K}_+}y$, respectively. As well, we shall refer to $\mathbb K$ as being a totally ordered field -- or the less precise, ordered field -- rather than the more cumbersome $(\mathbb K,\le_{\mathbb K_+})$.


\begin{lemma}[Triangle Inequality]\label{L: 1 tri ineq}
  Let $\mathbb{K}$ be an ordered field and $a,b\in\mathbb{K}$, then $|a+b|\le|a|+|b|$.
\end{lemma}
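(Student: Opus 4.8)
The plan is to derive the triangle inequality directly from the order axioms for $\mathbb{K}$, using only the definition $|x| = \max(-x, x)$ and the basic compatibility of the order with addition. First I would record the two elementary facts that for any $x \in \mathbb{K}$ we have $x \le |x|$ and $-x \le |x|$; both are immediate from $|x| = \max(-x,x)$, since the maximum of two elements dominates each of them. These are the only properties of absolute value I will need.

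Next I would apply these bounds to $a$ and to $b$ separately: from $a \le |a|$ and $b \le |b|$, adding the two inequalities (legitimate in an ordered field, since $x \le y$ and $u \le v$ imply $x+u \le y+v$) gives $a + b \le |a| + |b|$. Similarly, from $-a \le |a|$ and $-b \le |b|$ we get $-(a+b) = -a + (-b) \le |a| + |b|$. So $|a|+|b|$ is an upper bound for both $a+b$ and $-(a+b)$.

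Finally, since $|a+b| = \max(-(a+b), a+b)$ and any upper bound of a two-element set dominates its maximum, the two inequalities from the previous step combine to yield $|a+b| \le |a|+|b|$, which is the claim.

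I do not anticipate a genuine obstacle here; the only thing to be slightly careful about is justifying the "add two inequalities" step and the "maximum is the least upper bound of the pair" step strictly from Definition~\ref{D: Orderable fields} (i.e.\ from closure of $\mathbb{K}_+$ under addition and the trichotomy), rather than taking them for granted. If one wants to avoid even mentioning $\max$, an alternative is a short case analysis on the signs of $a$, $b$, and $a+b$, but the $\max$-based argument above is cleaner and is the route I would present.
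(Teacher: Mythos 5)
Your proof is correct and follows essentially the same route as the paper's: bound $a+b$ and $-(a+b)$ above by $|a|+|b|$ using $\pm x \le |x|$, then conclude via $|a+b| = \max(-(a+b),\, a+b)$. You simply spell out the intermediate steps that the paper's one-line proof leaves implicit.
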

\begin{proof}
  As $-|a|-|b|\le a+ b\le |a|+|b|$, we have $|a+b|\le|a|+|b|$ because $a+b\le|a|+|b|$ and $-(a+b)\le|a|+|b|$.
\end{proof}

\begin{definition}
  Let $\mathbb K$ be a totally ordered field and $A\subset \mathbb K$. We denote the set of upper bounds of $A$ by $\ub(A)$. More formally, $$\ub(A)=:\{x\in\mathbb K : (\forall a\in A)(a\le x)\}$$
\end{definition}
  
\begin{definition}[Ordered Field Homomorphism]
  Let $\mathbb K$ and $\mathbb F$ be ordered fields and $\phi:\mathbb K\to\mathbb F$ be a field homomorphism. Then $\phi$ is called an \emph{ordered field homomorphism} if it preserves the order; that is, $x\le y$ implies $\phi(x) \le\phi(y)$ for all $x, y\in\mathbb K$.
  Definitions of \emph{ordered field ismorphisms} and \emph{embeddings} follow similarly.
\end{definition}

\begin{remark}\label{R: q embedding}
  Let $\mathbb K$ be an ordered field. Then we define the ordered field embedding $\sigma:\mathbb Q\to\mathbb K$ by $\sigma(0)=:0$, $\sigma(n)=:n\cdot1$, $\sigma(-n)=:-\sigma(n)$ and $\sigma(\frac{m}{k})=:\frac{\sigma(m)}{\sigma(k)}$ for $n\in\mathbb N$ and $m,k\in\mathbb Z$. We say that $\sigma$ is the \emph{canonical embedding} of $\mathbb Q$ into $\mathbb K$.
\end{remark}

\begin{definition}[Formally Real]
  A field $\mathbb K$ is \emph{formally real} if, for every $n\in\mathbb N$, the equation $$\sum_{k=0}^n x_k^2 = 0$$ has only the trivial solution (that is, $x_k=0$ for each $k$) in $\mathbb K$.
\end{definition}

\begin{theorem}\label{T: orderable formally real}
  A field $\mathbb K$ is orderable \ifff $\mathbb K$ is formally real.
\end{theorem}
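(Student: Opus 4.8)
The plan is to prove this as the Artin--Schreier theorem, via the standard Zorn's lemma argument. The direction ``orderable $\Rightarrow$ formally real'' is immediate: fix a set $\mathbb{K}_+$ witnessing orderability. For $x \ne 0$, exactly one of $x,-x$ lies in $\mathbb{K}_+$, and in either case $x^2 \in \mathbb{K}_+$ by closure under multiplication. Since $\mathbb{K}_+$ is closed under addition and $0 \notin \mathbb{K}_+$, any sum $\sum_{k=0}^n x_k^2$ with some $x_k \ne 0$ lies in $\mathbb{K}_+$ and is in particular nonzero; hence that equation has only the trivial solution.

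For the converse I would work with \emph{preorderings}: call $P \subseteq \mathbb{K}$ a preordering if $P + P \subseteq P$, $P \cdot P \subseteq P$, $x^2 \in P$ for every $x \in \mathbb{K}$, and $-1 \notin P$. First I would note that if $\mathbb{K}$ is formally real then the set $T$ of finite sums of squares is a preordering: the only nonobvious requirement, $-1 \notin T$, holds because $-1 = \sum_{k=1}^n x_k^2$ would give $1^2 + \sum_{k=1}^n x_k^2 = 0$, a nontrivial solution contradicting formal reality. Since the union of a chain of preorderings is again a preordering (it still omits $-1$), Zorn's lemma yields a preordering $P \supseteq T$ that is maximal with respect to inclusion.

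The heart of the argument is showing that this maximal $P$ is an ordering, i.e. $P \cup (-P) = \mathbb{K}$ and $P \cap (-P) = \{0\}$. For the first, given $a \notin P$ I would verify that $P - aP := \{\, p - aq : p,q \in P \,\}$ is again a preordering containing $P$ (take $q=0$) and $-a$ (take $p=0,\,q=1$); closure under addition is routine, closure under multiplication uses $a^2 \in P$, and the crucial point $-1 \notin P - aP$ is where I expect the real work: if $-1 = p - aq$ then $q \ne 0$ (otherwise $-1 = p \in P$), and multiplying through by $(1/q)^2 \in P$ gives $a = (p+1)\,q\,(1/q)^2 \in P$, a contradiction --- this is exactly the step where being a \emph{field} (existence of $1/q$) is indispensable. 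By maximality $P - aP = P$, so $-a \in P$, proving $P \cup (-P) = \mathbb{K}$. For the second, if $a \in P \cap (-P)$ with $a \ne 0$ then $1/a = a\,(1/a)^2 \in P$ and so $-1 = (-a)(1/a) \in P$, a contradiction; hence $a = 0$.

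Finally I would set $\mathbb{K}_+ := P \setminus \{0\}$ and check the three conditions of Definition~\ref{D: Orderable fields}: condition (1) is immediate; for condition (3), $x \ne 0$ forces $x \in P$ or $-x \in P$, hence $x \in \mathbb{K}_+$ or $-x \in \mathbb{K}_+$; for condition (2), products of nonzero field elements are nonzero, and if $x,y \in \mathbb{K}_+$ with $x+y=0$ then $y = -x \in P \cap (-P) = \{0\}$, contradicting $y \ne 0$, so $x+y \in \mathbb{K}_+$. The only genuine obstacle is the verification that $-1 \notin P - aP$ in the maximality step; the rest is bookkeeping.
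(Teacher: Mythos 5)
Your proof is correct and complete in both directions: the forward direction is the easy closure argument, and the converse is the standard Artin--Schreier argument via maximal preorderings and Zorn's lemma, with the key step (that $-1\notin P-aP$, using the invertibility of $q$) carried out correctly. The paper itself does not prove this theorem but defers to van der Waerden, Chapter 11, where essentially this same argument appears; your write-up supplies the proof the paper omits, and the only (trivial) loose end is to note $1\in P\setminus\{0\}$ so that $\mathbb K_+$ is non-empty as Definition~\ref{D: Orderable fields} requires.
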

Further discussion, as well as the proof, of the preceding theorem can be found in van der Waerden~\cite{waerden} (chapter 11).

\begin{example}[Non-Orderable Field]
  \mbox{}
  \begin{enumerate}
    \item The field of complex numbers $\mathbb C$ is not orderable. Indeed, suppose there exists a subset $\mathbb C_+\subset \mathbb C$ that satisfies the properties above. Thus $i\in\mathbb C_+$ or $-i\in\mathbb C_+$. However, either case implies that $-1=(\pm i)^2\in\mathbb C_+$ and $1=(-1)(-1)\in\mathbb C_+$. Thus $0=1-1\in\mathbb C_+$, a contradiction. Therefore, $\mathbb C$ is non-orderable.
    \item The p-adic numbers $\mathbb Q_p$ are also non-orderable for similar reasons (see Ribenboim~\cite{riben} p.144-145 and Todorov \& Vernaeve~\cite{todor})
  \end{enumerate}
\end{example}

\begin{definition}[Real Closed Field]\label{D: real closed}
  Let $\mathbb K$ be a field. We say that $\mathbb K$ is a \emph{real closed field} if it satisfies the following.
  \begin{def-enumerate}
    \item $\mathbb K$ is formally real (or orderable).
    \item $(\forall a\in\mathbb K)(\exists x\in\mathbb K)(a=x^2\emph{ or }a=-x^2)$.
    \item $(\forall P\in\mathbb K[t])(\deg(P)\emph{ is odd}\Rightarrow(\exists x\in\mathbb K)(P(x)=0))$.
  \end{def-enumerate}
\end{definition}

\begin{theorem}\label{T: 1 order real-closed}
  Let $\mathbb K$ be a real closed totally ordered field and $x\in\mathbb K$. Then $x>0$ \ifff $x=y^2$ for some $y\in\mathbb K$. Thus every real closed field is ordered in a unique way.
\end{theorem}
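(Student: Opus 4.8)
The plan is to first establish the pointwise characterization that $x>0$ holds if and only if $x$ is a nonzero square, and then to read off the uniqueness of the order from the fact that this characterization involves only the field operations of $\mathbb K$. For the forward direction, suppose $x>0$. Clause \textbf{2} of Definition~\ref{D: real closed} gives $y\in\mathbb K$ with $x=y^2$ or $x=-y^2$. To rule out the latter I would use the standard fact that squares are nonnegative in any totally ordered field: for $z\in\mathbb K$, either $z=0$ (and $z^2=0$), or $z\in\mathbb K_+$ (so $z^2=z\cdot z\in\mathbb K_+$ by clause \textbf{2} of Definition~\ref{D: Orderable fields}), or $-z\in\mathbb K_+$ (so $z^2=(-z)(-z)\in\mathbb K_+$); in all cases $z^2\geq 0$. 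Hence $-y^2\leq 0<x$, which forces $x=y^2$.

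For the converse, suppose $x=y^2$ with $y\neq 0$. Then $y\in\mathbb K_+$ or $-y\in\mathbb K_+$, and in either case $x=(\pm y)^2\in\mathbb K_+$ by closure of $\mathbb K_+$ under multiplication, so $x>0$. Since $0=0^2$, the fully precise statement is really that $x\geq 0$ if and only if $x$ is a square; I would remark on this and, wherever the theorem is subsequently applied, read it with the tacit hypothesis $y\neq 0$, equivalently $x\neq 0$.

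For the final assertion, the key observation is that clauses \textbf{1}--\textbf{3} of Definition~\ref{D: real closed} refer only to the field structure of $\mathbb K$, so whether $\mathbb K$ is real closed does not depend on any choice of order. Thus if $\le_1$ and $\le_2$ are two orders on $\mathbb K$ with positive cones $P_1$ and $P_2$, the characterization just proved applies verbatim to both, yielding $P_1=\{x\in\mathbb K : x\neq 0 \text{ and } x=y^2 \text{ for some } y\in\mathbb K\}=P_2$; hence $\le_1$ and $\le_2$ coincide.

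I do not anticipate a genuine obstacle here. The only points needing care are: invoking (or, as above, quickly reproving) the nonnegativity of squares in an ordered field; disposing of the trivial case $x=0$ in the phrasing of the statement; and, for the uniqueness clause, making explicit that the square characterization is available for \emph{every} order on $\mathbb K$ at once, which is exactly what pins the positive cone, and hence the order, down uniquely.
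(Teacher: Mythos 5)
Your proof is correct and follows essentially the same route as the paper's, which likewise derives both directions from clause \textbf{2} of Definition~\ref{D: real closed} together with the closure of $\mathbb K_+$ under multiplication. If anything, your version is the more careful one: the paper's proof does not explicitly rule out the alternative $x=-y^2$ in the forward direction, overlooks the $x=0$ (i.e.\ $y=0$) edge case when asserting $y^2\in\mathbb K_+$ in the converse, and leaves the uniqueness-of-order conclusion unargued --- all three points you address explicitly.
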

\begin{proof}
  Suppose $x>0$, then there exists $y\in\mathbb K$ such that $x=y^2$ by part 2 of Definition~\ref{D: real closed}.

  Conversely, suppose $x=y^2$ for some $y\in\mathbb K$. By the definition of $\mathbb K_+$, we have $y^2\in\mathbb K_+$ for all $y\in\mathbb K$. Thus $x>0$.
\end{proof}

\begin{remark}
  If the field $\mathbb K$ is real closed, then we shall always assume that $\mathbb K$ is ordered by the unique ordering given above.
\end{remark}

\begin{lemma}\label{P: cont}
  Let $\mathbb K$ be an ordered field and $a\in\mathbb K$ be fixed. The scaled identity function $a\cdot id(x)=:ax$ is uniformly continuous in the order topology on $\mathbb K$. Consequently, every polynomial in $\mathbb K$ is continuous.
\end{lemma}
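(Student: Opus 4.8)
The plan is to work directly with the order topology, first recording that on an ordered field $\mathbb{K}$ the order topology has as a base the family of ``$\mathbb{K}$-balls'' $B(x,\delta) = \{y\in\mathbb{K} : |x-y| < \delta\}$ with $x\in\mathbb{K}$ and $\delta\in\mathbb{K}_+$: indeed $B(x,\delta)$ is just the open interval $(x-\delta,\,x+\delta)$, and conversely any open interval $(p,q)$ containing $x$ contains $B(x,\delta)$ for $\delta=\min(x-p,\,q-x)\in\mathbb{K}_+$. With this description in hand, the relevant notion of uniform continuity of $f:\mathbb{K}\to\mathbb{K}$ reads: for every $\varepsilon\in\mathbb{K}_+$ there is $\delta\in\mathbb{K}_+$ such that $|x-y| < \delta$ implies $|f(x)-f(y)| < \varepsilon$ for all $x,y\in\mathbb{K}$. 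This is exactly what I will verify, and it is harmless that all quantities live in $\mathbb{K}$ rather than in $\mathbb{R}$, since $\mathbb{K}$ is a field.

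For the scaled identity: if $a=0$ the map is constant and any $\delta$ works; if $a\neq 0$, then $|a|\in\mathbb{K}_+$, and given $\varepsilon\in\mathbb{K}_+$ I take $\delta=\varepsilon/|a|\in\mathbb{K}_+$, so that $|x-y| < \delta$ yields $|ax-ay| = |a|\,|x-y| < |a|\delta = \varepsilon$, using that multiplication by a positive element preserves strict inequalities and the standard identity $|ab|=|a|\,|b|$ valid in any ordered field. This settles the first assertion.

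For the consequence, I will build polynomials from three facts, each proved by the usual $\varepsilon$--$\delta$ argument transplanted to $\mathbb{K}$, with the triangle inequality (Lemma~\ref{L: 1 tri ineq}) doing all the work: (i) constant maps are continuous; (ii) the identity map is continuous (the case $a=1$ above); (iii) if $f,g:\mathbb{K}\to\mathbb{K}$ are continuous at a point $x_0$, then so are $f+g$ and $fg$. For the sum one takes the minimum of the two $\delta$'s; for the product one writes $|f(x)g(x)-f(x_0)g(x_0)| \le |f(x)|\,|g(x)-g(x_0)| + |g(x_0)|\,|f(x)-f(x_0)|$, first shrinks the neighbourhood so that $|f(x)| < |f(x_0)|+1$, and then shrinks it further to control each summand. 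Iterating (ii) and (iii) shows each monomial $x\mapsto x^n$ is continuous; applying the Lemma to multiply by $a_n$ and invoking (iii) for the sum then gives continuity of any polynomial $\sum_{k=0}^n a_k x^k$.

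I do not expect a genuine obstacle: the point to watch is simply that we never invoke the Archimedean property, completeness, or anything specific to $\mathbb{R}$ — every estimate is an honest manipulation of inequalities valid in an arbitrary ordered field, and every $\delta$ is produced from $\varepsilon$ by field operations, hence automatically lies in $\mathbb{K}$. If anything is mildly delicate it is the two-stage choice of neighbourhood in the product case, but that is entirely routine.
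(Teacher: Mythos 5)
Your proof is correct and takes essentially the same approach as the paper: the same choice $\delta=\varepsilon/|a|$ and the same one-line estimate $|ax-ay|=|a|\,|x-y|<|a|\delta=\varepsilon$. In fact you are slightly more careful than the paper, which writes $\delta=\frac{\epsilon}{|a|}$ without excluding $a=0$ and asserts the polynomial consequence without the sum/product argument you supply.
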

\begin{proof}
  Given $\epsilon\in\mathbb K_+$, let $\delta = \frac{\epsilon}{|a|}$. Indeed, $(\forall x,y\in\mathbb K)(|x-y|<\delta\Rightarrow |ax-ay|=|a||x-y|<|a|\delta=\epsilon)$.
\end{proof}

\begin{lemma}[Intermediate Value Theorem]\label{L: ivt}
  Let $\mathbb K$ be an ordered field. As well, let $f:H\to\mathbb K$ be a function that is continuous in the order topology on $\mathbb K$ and $[a,b]\subset H$. If $\mathbb K$ is Dedekind complete (in the sense of sup), then, for any $u\in\mathbb K$ such that $f(a)\le u\le f(b)$ or $f(b)\le u\le f(a)$, there exists a $c\in [a,b]$ such that $f(c)=u$.
\end{lemma}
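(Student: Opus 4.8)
The plan is to prove the Intermediate Value Theorem by a Dedekind-cut argument, reducing to the case $f(a) \le u \le f(b)$ (the other case follows by applying the result to $-f$). Without loss of generality I would further reduce to $u = 0$ by replacing $f$ with $g = f - u$, which is still continuous by Lemma~\ref{P: cont} together with the fact that translations are continuous; so it suffices to show that if $g(a) \le 0 \le g(b)$ then $g(c) = 0$ for some $c \in [a,b]$. The natural set to apply Dedekind completeness to is
\[
  S = \{ x \in [a,b] : g(x) \le 0 \},
\]
which is nonempty (it contains $a$) and bounded above by $b$, hence has a supremum $c = \sup S \in \mathbb K$ with $c \in [a,b]$.

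The heart of the argument is then to rule out both $g(c) < 0$ and $g(c) > 0$ using continuity of $g$ at $c$. If $g(c) > 0$, take $\epsilon = g(c) \in \mathbb{K}_+$ and choose $\delta \in \mathbb{K}_+$ from continuity so that $|x - c| < \delta$ forces $|g(x) - g(c)| < \epsilon$, hence $g(x) > 0$; this shows no point of $(c-\delta, c]$ lies in $S$, so $c - \delta$ is an upper bound for $S$ (here one uses that $c > a$, which holds since $g(a) \le 0$ and near $a$ the sign is negative — or more carefully, if $c = a$ one still gets a contradiction because points just above $a$ would have to leave $S$ yet $\sup S = a$), contradicting $c = \sup S$. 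If instead $g(c) < 0$, then $c < b$ (since $g(b) \ge 0$), take $\epsilon = -g(c) \in \mathbb{K}_+$ and get $\delta \in \mathbb{K}_+$ with $g(x) < 0$ on $(c-\delta, c+\delta)$; picking any $x_0$ with $c < x_0 < \min(c+\delta, b)$ gives $x_0 \in S$ with $x_0 > c$, again contradicting $c = \sup S$. Therefore $g(c) = 0$, i.e. $f(c) = u$.

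The main obstacle, and the only place requiring genuine care, is the bookkeeping at the endpoints: one must make sure that in the $g(c)>0$ case there actually is room below $c$ inside $[a,b]$ to derive the contradiction, and in the $g(c)<0$ case there is room above $c$ inside $[a,b]$. Both are handled by observing that $g(a) \le 0 < g(c)$ forces $c \ne a$ in the first case and $g(c) < 0 \le g(b)$ forces $c \ne b$ in the second, after which the order-field arithmetic (choosing points strictly between $c$ and $c \pm \delta$, which exists since an ordered field is densely ordered in itself) is routine. Everything else — nonemptiness and boundedness of $S$, the reduction steps, the use of $\delta = \epsilon/|a|$-style continuity — is immediate from the definitions and from Lemma~\ref{P: cont}.
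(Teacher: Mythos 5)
Your proposal is correct and follows essentially the same route as the paper: take $c=\sup\{x\in[a,b]:f(x)\le u\}$ and use continuity at $c$ to rule out both $f(c)>u$ and $f(c)<u$; the normalization to $u=0$ and the explicit endpoint checks ($c\neq a$, $c\neq b$) are only cosmetic refinements of the paper's argument.
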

\begin{proof}
  We will only show the case when $f(a)\le f(b)$, the other should follow similarly.

  Let $S=:\{x\in [a,b]:f(x)\le u\}$. We observe that $S$ is non-empty as $a\in S$ and that $S$ is bounded above by $b$; thus, $c=:\sup(S)$ exists by assumption. As well, we observe that $c\in[a,b]$ because $c\le b$. To show that $f(c)=u$ we first observe that, as $f$ is continuous, we can find $\delta\in\mathbb K_+$ such that $(\forall x\in\mathbb K)(|x-c|<\delta \Rightarrow |f(c)-f(x)|<|f(c)-u|)$. 

  If $f(c)>u$, then, from our observation, it follows that $f(x)>f(c)-(f(c)-u)=u$ for all $x\in(c-\delta, c+\delta)$. Thus $c-\delta\in\ub(S)$, which contradicts the minimality of $c$. 

  Similarly, if $u>f(c)$, then, from our observation it follows that $u=f(c)+(u-f(c)>f(x)$ for all $x\in(c-\delta, c+\delta)$, which contradicts $c$ being an upper bound.

  Therefore $f(c)=u$ as $\mathbb K$ is totally ordered.
\end{proof}

\begin{remark}
  When dealing with polynomials, it follows from the Artin-Schrier Theorem that Dedekind completeness is not necessary to produce the results of the Intermediate Value Theorem. For a general reference, see Lang~\cite{lang}, Chapter XI.
\end{remark}

\begin{theorem}\label{T: order complete -> real closed}
  Let $\mathbb K$ be a totally ordered field which is also Dedekind complete. Then $\mathbb K$ is real closed.
\end{theorem}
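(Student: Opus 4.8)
The plan is to verify, one at a time, the three conditions in the definition of a real closed field (Definition~\ref{D: real closed}). The first is immediate: since $\mathbb K$ is a totally ordered field it is in particular orderable, hence formally real by Theorem~\ref{T: orderable formally real}. The other two conditions will both be extracted from the Intermediate Value Theorem (Lemma~\ref{L: ivt}), which applies here because $\mathbb K$ is assumed Dedekind complete and because every polynomial function on $\mathbb K$ is continuous in the order topology (Lemma~\ref{P: cont}).

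For the second condition I would show that every $a \ge 0$ has a square root; the cases $a = 0$ and $a < 0$ reduce to this (take $0$, respectively replace $a$ by $-a$). So fix $a > 0$ and set $b =: \max(1,a)$, so that $b^2 \ge b \ge a$. Applying Lemma~\ref{L: ivt} to $f(x) =: x^2$ on $[0,b]$, with $f(0) = 0 \le a \le b^2 = f(b)$, produces $c \in [0,b]$ with $c^2 = a$; hence $a = c^2$, and in the reduced cases $a = -c^2$.

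For the third condition, I would first divide a given odd-degree $P \in \mathbb K[t]$ by its leading coefficient, so that it suffices to find a root of a monic $P(t) = t^n + a_{n-1}t^{n-1} + \cdots + a_0$ with $n$ odd. Put $M =: 1 + \sum_{k=0}^{n-1} |a_k|$. The one estimate needing care is that, for $|t| = M$,
\[
  \left| a_{n-1}t^{n-1} + \cdots + a_0 \right| \le \Big( \sum_{k=0}^{n-1} |a_k| \Big) M^{n-1} = (M-1)M^{n-1} < M^n = |t|^n ,
\]
using $M \ge 1$ so that $|t|^k \le |t|^{n-1}$ for $k \le n-1$. Consequently $P(M) > 0$, while $P(-M) = -M^n + (\text{terms of absolute value} < M^n) < 0$ since $n$ is odd. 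Lemma~\ref{L: ivt} applied to $P$ on $[-M, M]$ then yields a root.

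I expect no serious obstacle: once the Intermediate Value Theorem and the continuity of polynomials are available, the argument is essentially bookkeeping. The only delicate point is the growth estimate in the third condition — choosing the threshold $M$ large enough that the leading term $t^n$ dominates the sum of all lower-order terms, and then reading off the sign change between $P(-M)$ and $P(M)$ that the oddness of $n$ supplies.
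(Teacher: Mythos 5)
Your proof is correct, but it takes a genuinely different route from the paper for the square-root condition. The paper verifies condition 2 of Definition~\ref{D: real closed} by a direct supremum argument: it sets $S =: \{x \in \mathbb K : x^2 < a\}$, lets $s =: \sup S$, and rules out both $s^2 < a$ and $s^2 > a$ by exhibiting explicit increments $h$ (e.g.\ $h =: \tfrac{1}{2}\min\{\tfrac{a-s^2}{(s+1)^2},1\}$) that contradict $s$ being the least upper bound. You instead apply the Intermediate Value Theorem (Lemma~\ref{L: ivt}) to $f(x) = x^2$ on $[0,\max(1,a)]$, which unifies conditions 2 and 3 under a single tool and spares you the fiddly algebra with $h$; the cost is that you lean on Lemma~\ref{L: ivt} (and hence on a supremum argument hidden inside it) even for square roots, whereas the paper's treatment of that case is self-contained. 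For condition 3 your argument is in fact more explicit than the paper's: the paper merely asserts $\lim_{x\to-\infty}P(x) = -\lim_{x\to\infty}P(x)$ and invokes the IVT, while you supply the concrete threshold $M =: 1+\sum_{k=0}^{n-1}|a_k|$ and the dominance estimate $\bigl|\sum_{k<n} a_k t^k\bigr| \le (M-1)M^{n-1} < M^n$ showing $P(-M) < 0 < P(M)$. That estimate is sound (it uses only $M \ge 1$, the multiplicativity of $|\cdot|$, and the triangle inequality of Lemma~\ref{L: 1 tri ineq} extended by induction), so both approaches are valid; yours buys uniformity and rigor at the third condition, the paper's buys independence from the IVT at the second.
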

\begin{proof}
  First observe that $\mathbb K$ is formally real because it is orderable.

  Now let $a\in\mathbb K_+$ and $S=:\{x\in\mathbb K: x^2<a\}$. Observe that $0\in S$ and that $m=:\max\{1,a\}$ is an upper bound of $S$. Indeed, when $a\le 1$, we have $x^2<1$, which implies $x<1$ for all $x\in S$. On the other hand, when $1<a$, we have $x^2<a<a^2$; thus, $x<a$ for all $x\in S$. From this observation, it follows that $s=:\sup S$ exists. We intend to show that $s^2=a$.

      \begin{description}
          \item[Case $(s^2<a)$:] Let $h=:\frac{1}{2}\min\{\frac{a-s^2}{(s+1)^2}, 1\}$. From this definition, it follows that
            \begin{equation}
              2h\le \frac{a-s^2}{(s+1)^2}\emph{\;\;and\;\;}2h\le 1\label{Eq: h obs}
            \end{equation}
            We wish to show that $(s+h)^2<a$.
       
            From $0<h\le\frac{1}{2}$ we have $h<s^2+1$, which implies that $h+2s<(s+1)^2$ and $h(h+2s) + s^2 < h(s+1)^2 + s^2$. Thus, we have $(s+h)^2<h(s+1)^2 + s^2$. By (\ref{Eq: h obs}), we know that $h(s+1)^2<2h(s+1)^2\le a-s^2$. Thus, we have $(s+h)^2<h(s+1)^2 + s^2 < a$. Therefore $(s+h)\in S$ which contradicts $s$ being an upper bound.
       
          \item[Case $(s^2>a)$:] Let $h=:\frac{s^2-a}{2(s+1)^2}$. First we observe that, from the definition of $h$, $s^2-a=2h(s+1)^2>h(s+1)^2$. We intend to show that $(s-h)^2>a$. Indeed, we obviously have $s^2+1>-h$ which implies that $(s+1)^2>2s-h$ and $h(s+1)^2>h(2s-h)$. Thus $s^2-h(s+1)^2<s^2-h(2s-h)=(s-h)^2$ and by our observation, we find that $a<s^2-h(s+1)^2<(s-h)^2$. Therefore $(s-h)$ is an upper bound of $S$, which contradicts the minimality of $s$.
        \end{description}

  Finally, to show that every odd degree polynomial $P(x)\in\mathbb K[x]$ has a root, we observe that $lim_{x\to-\infty}P(x)=-\lim_{x\to\infty}P(x)$. Combining this result with Lemma~\ref{P: cont} and Lemma~\ref{L: ivt}, we find that $\exists c\in\mathbb K$ such that $P(c)=0$.
\end{proof}

\section{Infinitesimals in Ordered Fields}\label{S: inf}
In this section we recall the definitions of infinitely small (infinitesimal), finite and infinitely large elements in totally ordered fields and study their basic properties. As well, we present a characterization of Archimedean fields in the languague of infinitesimals and infinitely large elements.

\begin{definition}[Archimedean Property]\label{D: Archimedean Field}
  A totally ordered field (ring) $\mathbb{K}$ is \emph{Archimedean} if for every $x\in\mathbb{K}$, there exists $n\in\mathbb{N}$ such that $|x|<n$. If $\mathbb K$ is Archimedean, we also may refer to $\mathbb K(i)$ as Archimedean. If $\mathbb K$ is not Archimedean, then we refer to $\mathbb K$ as \emph{non-Archimedean}.
\end{definition}

For the rest of the section we discuss the properties of Archimedean and non-Archimedean fields through the characteristics of infinitesimals.

\begin{definition}\label{D: infinitesimal}
  Let $\mathbb{K}$ be a totally ordered field. We define
  \begin{def-enumerate}
    \item $\mathcal{I}(\mathbb{K})=:\{x\in\mathbb{K} : (\forall n\in\mathbb{N})(|x|<\frac{1}{n})\}$
    \item $\mathcal{F}(\mathbb{K})=:\{x \in\mathbb{K} : (\exists n\in \mathbb{N})(|x|\le n)\}$
    \item $\mathcal{L}(\mathbb{K})=:\{x \in\mathbb{K} : (\forall n\in \mathbb{N})(n<|x|)\}$
  \end{def-enumerate}
  The elements in $\mathcal I(\mathbb K), \mathcal F(\mathbb K), \textrm{ and } \mathcal L(\mathbb K)$ are referred to as \emph{infinitesimal (infinitely small), finite and infinitely large}, respectively. We sometimes write $x\approx0$ if $x\in\mathcal I(\mathbb K)$ and $x\approx y$ if $x-y\approx 0$, in which case we say that $x$ is \emph{infinitesimally close} to $y$.
\end{definition}

\begin{proposition} For a totally ordered field $\mathbb K$, we have the following properties for the sets given above.
  \begin{thm-enumerate}
    \item $\mathcal I(\mathbb K)\subset \mathcal F(\mathbb K)$.
    \item $\mathbb K=\mathcal F(\mathbb K)\cup\mathcal L(\mathbb K)$.
    \item $\mathcal F(\mathbb K)\cap\mathcal L(\mathbb K)=\emptyset$.
    \item If $x\in \mathbb K\setminus \{0\}$ then $x\in\mathcal I(\mathbb K)$ iff $\frac{1}{x}\in\mathcal L(\mathbb K)$.
  \end{thm-enumerate}
\end{proposition}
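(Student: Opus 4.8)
The plan is to dispatch the four claims in turn, each being a direct consequence of the definitions in Definition~\ref{D: infinitesimal} together with the order axioms of $\mathbb{K}$; nothing deep is involved, so the write-up is mostly a matter of keeping the quantifiers straight.

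For $(i)$, if $x \in \mathcal{I}(\mathbb{K})$ then taking $n = 1$ in its defining property gives $|x| < 1$, hence $|x| \le 1$, so $x \in \mathcal{F}(\mathbb{K})$. For $(ii)$ and $(iii)$ the real content is that $\mathcal{F}(\mathbb{K})$ and $\mathcal{L}(\mathbb{K})$ are complementary subsets of $\mathbb{K}$. The inclusion $\mathcal{F}(\mathbb{K}) \cup \mathcal{L}(\mathbb{K}) \subseteq \mathbb{K}$ is immediate; conversely, given $x \in \mathbb{K}$ with $x \notin \mathcal{L}(\mathbb{K})$, the negation of the defining condition of $\mathcal{L}(\mathbb{K})$ provides some $n \in \mathbb{N}$ with $\neg(n < |x|)$, and since the order is total this forces $|x| \le n$, i.e. $x \in \mathcal{F}(\mathbb{K})$; this gives $(ii)$. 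For $(iii)$, if some $x$ belonged to both sets we would obtain $n \in \mathbb{N}$ with $|x| \le n$ together with $n < |x|$ (the latter from the $\mathcal{L}(\mathbb{K})$ condition applied at that same $n$), which is impossible in a totally ordered field.

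For $(iv)$, fix $x \in \mathbb{K} \setminus \{0\}$ and note that then $|x| > 0$ and $|1/x| = 1/|x|$. The claim reduces to the equivalence ``$|x| < \tfrac{1}{n}$ for every $n \in \mathbb{N}$'' $\iff$ ``$n < \tfrac{1}{|x|}$ for every $n \in \mathbb{N}$''. Both directions are the same one-line computation, run forwards or backwards: for a fixed $n$, multiplying $|x| < \tfrac1n$ by $n > 0$ gives $n|x| < 1$, and dividing by $|x| > 0$ gives $n < \tfrac{1}{|x|}$; conversely, multiplying $n < \tfrac{1}{|x|}$ by $|x| > 0$ gives $n|x| < 1$, and dividing by $n > 0$ gives $|x| < \tfrac1n$. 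Each step uses only compatibility of the order with multiplication by positive elements. The one point to watch — and the closest this argument comes to an obstacle, a mild one — is that the two quantifications range over the same index set, so for each fixed $n$ one invokes the hypothesis at precisely that $n$ and no uniformity is needed.
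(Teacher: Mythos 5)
Your proof is correct and follows essentially the same definition-chasing route as the paper's: $(i)$ via $n=1$, $(ii)$--$(iii)$ via the dichotomy between the defining conditions of $\mathcal F(\mathbb K)$ and $\mathcal L(\mathbb K)$, and $(iv)$ via the order-compatibility of multiplication by positive elements. If anything, your handling of the negation in $(ii)$ (turning $\neg(n<|x|)$ into $|x|\le n$ by totality) is slightly more careful than the paper's.
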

\begin{proof}
  \begin{thm-enumerate}
    \item Let $\alpha\in\mathcal I(\mathbb K)$, then $\alpha<1$, therefore $\alpha\in\mathcal F(\mathbb K)$.
    \item Suppose $x\in\mathbb K$, then either $(\exists n\in\mathbb N)(|x|<n)$ or $(\forall n\in\mathbb N)(n\le|x|)$. Thus $x\in\mathcal F(\mathbb K)$ or $x\in\mathcal L(\mathbb K)$. The other direction follows from the definition.
    \item Suppose $x\in\mathcal F(\mathbb K)\cap\mathcal L(\mathbb K)$, then $\exists n\in\mathbb N$ such that $|x|<n$, but, we also have $(\forall m\in\mathbb N)(m<|x|)$; thus $n<|x|<n$, a contradiction.
    \item Suppose $x\in\mathbb K\setminus\{0\}$. Then $x\in\mathcal L(\mathbb K)$ iff $(\forall n\in\mathbb N)(n<|x|)$ iff $(\forall n\in\mathbb N)(|\frac{1}{x}|<\frac{1}{n})$ iff $\frac{1}{x}\in\mathcal I(\mathbb K)$
  \end{thm-enumerate}
\end{proof}

\begin{proposition}[Characterizations]\label{P: 1 arch}
  Let $\mathbb{K}$ be a totally ordered field. Then the following are equivalent:
  \begin{thm-enumerate}
    \item $\mathbb{K}$ is Archimedean.
    \item $\mathcal L(\mathbb K)=\emptyset$.
    \item $\mathcal I(\mathbb K)=\{0\}$.
    \item $\mathcal{F}(\mathbb{K})=\mathbb{K}$.
  \end{thm-enumerate}
\end{proposition}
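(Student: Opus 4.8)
The plan is to prove the cycle of implications (i) $\Rightarrow$ (iv) $\Rightarrow$ (ii) $\Rightarrow$ (iii) $\Rightarrow$ (i), leaning heavily on the preceding Proposition so that essentially no new work is required. First I would dispatch (i) $\Leftrightarrow$ (iv), which is little more than a restatement of definitions: if $\mathbb{K}$ is Archimedean, then every $x\in\mathbb{K}$ satisfies $|x|<n$ for some $n\in\mathbb{N}$, hence $|x|\le n$, so $x\in\mathcal{F}(\mathbb{K})$ and $\mathcal{F}(\mathbb{K})=\mathbb{K}$; conversely, if $\mathcal{F}(\mathbb{K})=\mathbb{K}$, then for each $x$ there is $n$ with $|x|\le n<n+1$, which gives the Archimedean property. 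The only subtlety is the harmless mismatch between the strict inequality in Definition~\ref{D: Archimedean Field} and the non-strict one in Definition~\ref{D: infinitesimal}, absorbed by passing from $n$ to $n+1$.

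Next, (iv) $\Leftrightarrow$ (ii) is immediate from parts \emph{(ii)} and \emph{(iii)} of the previous Proposition: since $\mathbb{K}=\mathcal{F}(\mathbb{K})\cup\mathcal{L}(\mathbb{K})$ is a disjoint union, $\mathcal{F}(\mathbb{K})$ exhausts $\mathbb{K}$ exactly when its complement $\mathcal{L}(\mathbb{K})$ is empty. For (ii) $\Leftrightarrow$ (iii) I would invoke part \emph{(iv)} of that Proposition, namely that for $x\neq 0$ one has $x\in\mathcal{I}(\mathbb{K})$ iff $\frac{1}{x}\in\mathcal{L}(\mathbb{K})$, together with the trivial observation that $0\in\mathcal{I}(\mathbb{K})$ always (as $|0|=0<\frac{1}{n}$ for every $n$). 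Then: if $\mathcal{L}(\mathbb{K})=\emptyset$ and $x\in\mathcal{I}(\mathbb{K})\setminus\{0\}$, then $\frac{1}{x}\in\mathcal{L}(\mathbb{K})$, a contradiction, so $\mathcal{I}(\mathbb{K})=\{0\}$; conversely, if $\mathcal{I}(\mathbb{K})=\{0\}$ and $y\in\mathcal{L}(\mathbb{K})$, then $y\neq 0$ and $\frac{1}{y}$ is a nonzero infinitesimal, again a contradiction.

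Finally, (iii) $\Rightarrow$ (i) can be obtained by chaining the equivalences already proved, (iii) $\Rightarrow$ (ii) $\Rightarrow$ (iv) $\Rightarrow$ (i); alternatively, a direct argument runs: if some $x\notin\mathcal{F}(\mathbb{K})$, then $x\in\mathcal{L}(\mathbb{K})$ by part \emph{(ii)} of the previous Proposition, so $\frac{1}{x}$ is a nonzero infinitesimal, contradicting (iii). Since each step is either a definitional unwinding or a one-line appeal to the previous Proposition, I do not anticipate any genuine obstacle; the only places to be careful are the bookkeeping around the element $0$ (which lies in $\mathcal{I}(\mathbb{K})$ but has no reciprocal) and the strict-versus-non-strict inequality conventions in the two definitions.
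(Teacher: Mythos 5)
Your proof is correct and takes essentially the same route as the paper's: the same cycle of equivalences, driven by the reciprocal correspondence between nonzero infinitesimals and infinitely large elements together with the disjoint decomposition $\mathbb{K}=\mathcal{F}(\mathbb{K})\cup\mathcal{L}(\mathbb{K})$, merely traversed in a different order and with the computations delegated to the preceding Proposition rather than redone inline. Your extra bookkeeping around $0\in\mathcal{I}(\mathbb{K})$ and the strict-versus-non-strict inequality conventions is sound but does not constitute a different argument.
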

\begin{proof}
  \begin{description}
    \item[$(i)\Rightarrow(ii)$] Follows from the definition of Archimedean field.
    \item[$(ii)\Rightarrow(iii)$] Suppose $d\alpha\in\mathcal{I}(\mathbb{K})$ such that $d\alpha\neq0$. As $\mathbb{K}$ is a field, $d\alpha^{-1}$ exists. Thus $d\alpha<\frac{1}{n}$, for all $n\in\mathbb{N}$, which gives $1<\frac{1}{n}d\alpha^{-1}$ for all $n\in \mathbb{N}$. Therefore $n<d\alpha^{-1}$ for all $n\in\mathbb{N}$, which means $d\alpha\in \mathcal{L}(\mathbb{K})$.
    \item[$(iii)\Rightarrow(iv)$] Note that we clearly have $\mathcal{F}(\mathbb{K})\subseteq\mathbb{K}$. Suppose, to the contrary, there exists $\alpha\in\mathbb{K}\setminus\mathcal{F}(\mathbb{K})$. Then, by definition, $|\alpha|>n$ for all $n\in\mathbb{N}$; hence $\frac{1}{n}>\frac{1}{|\alpha|}$ for all $n\in\mathbb{N}$ because $\mathbb{K}$ is a field. Thus $\frac{1}{|\alpha|}\in\mathcal{I}(\mathbb{K})$ so that $\frac{1}{|\alpha|}=0$, a contradiction.
    \item[$(iv)\Rightarrow(i)$] By definition of $\mathcal{F}(\mathbb{K})$, we know that for every $\alpha\in\mathbb{K}=\mathcal{F}(\mathbb{K})$ there exists a $n\in\mathbb{N}$ such that $|\alpha|<n$; hence $\mathbb{K}$ is Archimedean.
  \end{description}
\end{proof}

\begin{lemma}\label{L: finite arch ring}
  Let $\mathbb{K}$ be a totally ordered field. Then
  \begin{thm-enumerate}
  \item  $\mathcal{F}(\mathbb{K})$ is an Archimedean ring.
  \item  $\mathcal{I}(\mathbb{K})$ is a maximal ideal of $\mathcal{F}(\mathbb{K})$. Moreover, $\mathcal{I}(\mathbb{K})$ is a \emph{convex ideal} in the sense that $a\in\mathcal{F}(\mathbb{K})$ and $|a|\le|b|\in\mathcal{I}(\mathbb{K})$ implies $a\in\mathcal{I}(\mathbb{K})$.
  \end{thm-enumerate}
  Consequently $\mathcal{F}(\mathbb{K})/\mathcal{I}(\mathbb{K})$ is a totally ordered Archimedean field.
\end{lemma}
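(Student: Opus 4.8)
The plan is to work through the statement in the order given: establish that $\mathcal{F}(\mathbb{K})$, carrying the order induced from $\mathbb{K}$, is a totally ordered Archimedean ring; then check that $\mathcal{I}(\mathbb{K})$ is a convex ideal; then promote this to a \emph{maximal} ideal; and finally build the quotient and put a total order on it.

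For part (i), I would first verify that $\mathcal{F}(\mathbb{K})$ is a subring of $\mathbb{K}$: it contains $0$ and $1$, it is closed under negation since $|{-x}|=|x|$, and it is closed under $+$ and $\times$, because if $|x|\le n$ and $|y|\le m$ then $|x+y|\le|x|+|y|\le n+m$ by the Triangle Inequality (Lemma~\ref{L: 1 tri ineq}) and $|xy|=|x|\,|y|\le nm$. With the restricted order it is a totally ordered ring, and it is Archimedean straight from the definition of $\mathcal{F}(\mathbb{K})$, since each $x\in\mathcal{F}(\mathbb{K})$ satisfies $|x|\le n<n+1$ for some $n\in\mathbb{N}$.

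For part (ii), the ideal axioms are short computations. The set $\mathcal{I}(\mathbb{K})$ is an additive subgroup: it contains $0$, is stable under negation, and given $x,y\in\mathcal{I}(\mathbb{K})$ and $n\in\mathbb{N}$, applying the defining inequality at $2n$ yields $|x+y|\le|x|+|y|<\frac{1}{2n}+\frac{1}{2n}=\frac{1}{n}$. It absorbs $\mathcal{F}(\mathbb{K})$: given $a\in\mathcal{F}(\mathbb{K})$ with $|a|\le m$ (enlarging $m$ so that $m\ge1$) and $x\in\mathcal{I}(\mathbb{K})$, applying the defining inequality at $mn$ gives $|ax|=|a|\,|x|<m\cdot\frac{1}{mn}=\frac{1}{n}$. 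Convexity is immediate: if $a\in\mathcal{F}(\mathbb{K})$ and $|a|\le|b|\in\mathcal{I}(\mathbb{K})$, then $|a|<\frac{1}{n}$ for every $n$, so $a\in\mathcal{I}(\mathbb{K})$. For maximality I would observe that $\mathcal{F}(\mathbb{K})$ is a \emph{local} ring with $\mathcal{I}(\mathbb{K})$ as unique maximal ideal: if $x\in\mathcal{F}(\mathbb{K})\setminus\mathcal{I}(\mathbb{K})$ then $x\ne0$, and by the earlier Proposition on $\mathcal{I},\mathcal{F},\mathcal{L}$ (namely $x\in\mathcal{I}(\mathbb{K})$ iff $\frac{1}{x}\in\mathcal{L}(\mathbb{K})$, together with $\mathbb{K}=\mathcal{F}(\mathbb{K})\cup\mathcal{L}(\mathbb{K})$ and the disjointness of $\mathcal{F}(\mathbb{K})$ and $\mathcal{L}(\mathbb{K})$) we obtain $\frac{1}{x}\in\mathcal{F}(\mathbb{K})$, so $x$ is a unit of $\mathcal{F}(\mathbb{K})$. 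Since $1\notin\mathcal{I}(\mathbb{K})$ the ideal is proper, and any ideal strictly larger than $\mathcal{I}(\mathbb{K})$ meets $\mathcal{F}(\mathbb{K})\setminus\mathcal{I}(\mathbb{K})$, hence contains a unit, hence equals $\mathcal{F}(\mathbb{K})$; so $\mathcal{I}(\mathbb{K})$ is maximal and $\mathcal{F}(\mathbb{K})/\mathcal{I}(\mathbb{K})$ is a field.

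For the concluding claim, write $\mathbb{F}=:\mathcal{F}(\mathbb{K})/\mathcal{I}(\mathbb{K})$ and order it by the cone $P=:\{[x] : x\in\mathcal{F}(\mathbb{K}),\ x\ge0\}$. Closure of $P$ under $+$ and $\times$ is clear, and $P\cup(-P)=\mathbb{F}$ since $\mathbb{K}$ is totally ordered, so every class has a representative of constant sign. The one step that genuinely uses the convexity of $\mathcal{I}(\mathbb{K})$ — and the only point I expect to require care — is antisymmetry, $P\cap(-P)=\{[0]\}$: if $x\ge0$ represents $[x]$ and $y\ge0$ represents $-[x]=[-x]$, then $x+y\in\mathcal{I}(\mathbb{K})$ while $0\le x\le x+y$, so convexity forces $x\in\mathcal{I}(\mathbb{K})$, i.e.\ $[x]=[0]$. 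Hence $\mathbb{F}$ is a totally ordered field, and it is Archimedean because any $[x]\in\mathbb{F}$ has a representative $x$ with $|x|\le n$ for some $n\in\mathbb{N}$, whence $|[x]|=[|x|]\le n\cdot[1]$ in $\mathbb{F}$.
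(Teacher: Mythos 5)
Your proof is correct, and for parts (i) and (ii) it follows essentially the same route as the paper: the same triangle-inequality computations for ring closure, the same $\frac{1}{2n}$ and $\frac{1}{mn}$ tricks for the ideal axioms, and the same core observation for maximality (any finite non-infinitesimal $k$ has $\frac{1}{k}\in\mathcal{F}(\mathbb{K})$, so a properly larger ideal contains $1$) — you merely repackage that last point as the statement that $\mathcal{F}(\mathbb{K})$ is a local ring with $\mathcal{I}(\mathbb{K})$ its set of non-units. The one genuine difference is that you prove the concluding clause, which the paper states without argument: you define the positive cone on the quotient, and correctly identify that the only nontrivial axiom is $P\cap(-P)=\{[0]\}$, which is exactly where the convexity of $\mathcal{I}(\mathbb{K})$ is needed (if $x,y\ge0$ and $x+y\in\mathcal{I}(\mathbb{K})$, then $0\le x\le x+y$ forces $x\in\mathcal{I}(\mathbb{K})$). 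That verification is a worthwhile addition, since without convexity the induced order on the quotient need not be well defined.
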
     
\begin{proof}   
  \mbox{}
  \begin{thm-enumerate}
  \item  The fact that $\mathcal{F}(\mathbb{K})$ is Archimedean follows directly from its definition.  Observe that $|-1|\le1$, therefore $-1\in\mathcal{F}(\mathbb{K})$. Suppose that $a,b,c\in\mathcal{F}(\mathbb{K})$, then $|a|\le n$, $|b|\le m$ and $|c|\le k$ for some $n,m,k\in\mathbb{N}$. Thus $|ab+c|\le|a||b| + |c|\le nm + k\in\mathbb{N}$ by Lemma~\ref{L: 1 tri ineq}, which implies $ab+c\in\mathcal{F}(\mathbb{K})$.
  \item  Let $x,y\in\mathcal{I}(\mathbb{K})$. Then, for any $n\in\mathbb{N}$, we have $|x +y|\le|x|+|y|<\frac{1}{2n}+\frac{1}{2n}=\frac{1}{n}$; thus $x+y\in\mathcal{I}(\mathbb{K})$.

  Now suppose $a\in\mathcal{I}(\mathbb{K})$ and $b\in\mathcal{F}(\mathbb{K})$. Then $|b|\le n$ for some $n\in\mathbb{N}$. As $|a|<\frac{1}{nm}$ for all $m\in\mathbb{N}$, we have $|ab|\le\frac{n}{nm}=\frac{1}{m}$ for all $m\in\mathbb{N}$. Hence, $ab\in\mathcal{I}(\mathbb{K})$.

  Suppose there exists an ideal $R\subseteq\mathcal{F}(\mathbb{K})$ that properly contains $\mathcal{I}(\mathbb{K})$ and let $k\in R\setminus\mathcal{I}(\mathbb{K})$. Then $\frac{1}{n}\le |k|$ for some $n\in\mathbb{N}$, hence $n\ge\frac{1}{|k|}\in\mathbb{K}$ which implies $\frac{1}{k}\in\mathcal{F}(\mathbb{K})$ and $1=\frac{k}{k}\in R$. Therefore $R=\mathcal{F}(\mathbb{K})$.
  
  Finally, let $b\in\mathcal{I}(\mathbb{K})$. Suppose $a\in\mathcal{F}(\mathbb{K})$ such that $|a|<|b|$. Then $|a|<|b|< \frac{1}{n}$ for all $n\in\mathbb{N}$. Therefore $a\in\mathcal{I}(\mathbb{K})$.
  \end{thm-enumerate}
\end{proof}     
  
\begin{remark}
  Archimedean rings (which are not fields) might have non-zero infinitesimals. For example, $\mathcal F(\mathbb K)$ is always an Archimedean ring, but it has non-zero infinitesimals when $\mathbb K$ is a non-Archimedean field.
\end{remark}
  
\begin{example}[Archimedean Fields]
  The fields $\mathbb R, \mathbb Q, \mathbb C$ are all Archimedean fields.
\end{example}
  
For examples of non-Archimedean fields, we refer the reader to \S~\ref{S: examples 1} and \S~\ref{S: examples 2}.
  
  
\section{Completeness of an Archimedean Field}\label{S: CAF}
  
In what follows, convergence is meant in reference to the order topology on $\mathbb K$. As well, the reader should recall that there is a natural embedding of the rationals (and, thus the natural numbers) into any totally ordered field (see remark~\ref{R: q embedding}).
  
In what follows, we provide several definitions of rather well known forms of completeness that will be used throughout the rest of this section.

\begin{definition}[Completeness]\label{D: completeness}
  Let $\mathbb{K}$ be a totally ordered field.
  \begin{def-enumerate}
  \item Let $\kappa$ be an uncountable cardinal. Then $\mathbb{K}$ is \emph{Cantor $\kappa$-complete} if every family $\{[a_\gamma,b_\gamma]\}_{\gamma\in \Gamma}$ of fewer than $\kappa$ closed bounded intervals in $\mathbb{K}$ with the finite intersection property (F.I.P.) has a non-empty intersection, $\bigcap_{\gamma\in \Gamma} [a_\gamma,b_\gamma]\neq \emptyset$. If $\mathbb{K}$ is Cantor $\aleph_1$-complete, where $\aleph_1=\aleph_0^+$ (the successor of $\aleph_0=\card(\mathbb{N})$), then we say that $\mathbb{K}$ is \emph{Cantor complete}. The latter means that every nested sequence of bounded closed intervals in $\mathbb{K}$ has a non-empty intersection.
  \item Let $^*\mathbb K$ be a non-standard extension of $\mathbb K$  (see either Lindstr\o m~\cite{lindstrom} or Davis~\cite{davis}) and let $\mathcal F(^*\mathbb K)$ and $\mathcal I(^*\mathbb K)$ be the sets of finite and infinitesimal elements in $^*\mathbb K$, respectively (see Definition~\ref{D: infinitesimal}). Then we say that $\mathbb{K}$ is \emph{Leibniz complete} if for every $\alpha\in\mathcal{F}(^*\mathbb{K})$, there exists unique $L\in\mathbb K$ and $dx\in\mathcal{I}(^*\mathbb{K})$ such that $\alpha=L+dx$; we will sometimes denote this by $\mathcal{F}(^*\mathbb{K})=\mathbb{K}\oplus \mathcal{I}(^*\mathbb{K})$ which is equivalent to saying $\mathcal{F}(^*\mathbb{K})/\mathcal{I}(^*\mathbb{K})=\mathbb{K}$.
  \item $\mathbb{K}$ is \emph{Dedekind complete} if every non-empty subset of $\mathbb{K}$ that is bounded from above has a supremum.
  \item $\mathbb{K}$ is \emph{sequentially complete} if every fundamental (Cauchy) sequence in $\mathbb{K}$ converges. Recall that a sequence $\{a_n\}$ in a totally ordered field $\mathbb{K}$ (not necessarily Archimedean) is called \emph{fundamental} if for all $\epsilon\in\mathbb{K}_+$, there exists an $N\in\mathbb{N}$ such that for all $n,m\in\mathbb{N}$, $n,m\ge N$ implies that $|a_n-a_m|<\epsilon$.
  \item We say that $\mathbb{K}$ is \emph{Bolzano-Weierstrass complete} if every bounded sequence has a convergent subsequence.
  \item We say that $\mathbb{K}$ is \emph{Bolzano complete} if every bounded infinite set has a cluster point.
  \item We say that $\mathbb{K}$ is \emph{monotone complete} if every bounded monotonic sequence is convergent.
  \item Suppose that $\mathbb{K}$ is Archimedean. Then $\mathbb{K}$ is \emph{Hilbert complete} if $\mathbb{K}$ has no proper totally ordered Archimedean field extensions.
  \end{def-enumerate}
\end{definition}

\begin{remark}[Completeness of the Reals in History]
  \mbox{}
  \begin{def-enumerate}
  \item Leibniz completeness (number 2 in Definition~\ref{D: completeness} above) appears in the early Leibniz-Euler Infinitesimal Calculus as the statement that ``every finite number is infinitesmially close to a unique usual quantity.'' Here the ``usual quantities'' are what we now refer to as the real numbers and can be identified with $\mathbb{K}$ in the definition above. This form of completeness was more or less, always treated as an obvious fact; what was not obvious, and a possible reason for the demise of the infinitesimals, was the validity of what has come to be known as the Leibniz Principle (see H. J. Keisler~\cite{keisler} p. 42 and Stroyan \& Luxemburg~\cite{strolux76} p. 22), which roughly states that there is a non-Archimedean field extension $^*\mathbb{K}$ of $\mathbb{K}$ such that every function $f$ on $\mathbb{K}$ has an extension $^*f$ to $^*\mathbb{K}$ that ``preserves all the properties of $\mathbb{K}$.'' For example, $(x+y)^2=x^2+2xy+y^2$ and $^*\sin(x+y)={^*\sin(x)}{^*\cos(y)}+{^*\sin(y)}{^*\cos(x)}$ hold in $^*\mathbb{K}$ because the analogous statements hold in $\mathbb{K}$ (note that $^*\sin$ is usually written as $\sin$ for convenience). All attempts to construct a field with such properties failed until the 1960's when A. Robinson developed the theory of non-standard analysis along with the Transfer Principle, which is analogous to the Leibniz Principle, and proved that every field $\mathbb{K}$ has a non-standard extension $^*\mathbb{K}$. For a detailed exposition, we refer to Lindstr\o m~\cite{lindstrom}, Davis~\cite{davis} and Keisler~\cite{keisler},\cite{keisler2}.
  \item Dedekind completeness was introduce by Dedekind (independently from many others, see O'Connor~\cite{oconnor}) at the end of the 19th century. From the point of view of modern mathematics, Dedekind proved the consistency of the axioms of the real numbers by constructing an example from Dedekind cuts.
  \item Sequential completeness, listed as number 4 above, is a well known form of completeness of metric spaces, but it has also been used in constructions of the real numbers: Cantor's construction using Cauchy sequences (see O'Connor~\cite{oconnor}), an example of which can be found in Hewitt \& Stromberg~\cite{hewitt}.
  \item Cantor completeness (also known as the ``nested interval property''), monotone completeness, Bolzano-Weierstrass completeness, and Bolzano completeness typically appear in real analysis as ``theorems'' or ``important principles'' rather than as forms of completeness; however, in non-standard analysis, Cantor completeness takes a much more important role along with the concept of algebraic saturation which is defined in Definition~\ref{D: algebraic saturation}.
  \item Hilbert completeness, listed as number 8 above, is a less well-known form of completeness that was originally introduced by Hilbert in 1900 with his axiomatic definition of the real numbers (see Hilbert~\cite{hilbert} and O'Connor~\cite{oconnor}).
  \end{def-enumerate}
\end{remark}

\begin{theorem}\label{T: exist ded}
  There exists a Dedekind complete field.
\end{theorem}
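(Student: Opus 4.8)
The plan is to exhibit a concrete Dedekind complete field, namely the field $\mathbb{R}$ obtained from $\mathbb{Q}$ by Dedekind's method of cuts. First I would define a \emph{cut} to be a set $A\subsetneq\mathbb{Q}$ that is nonempty, bounded above in $\mathbb{Q}$, downward closed (if $q\in A$ and $p<q$ then $p\in A$), and without a largest element. Let $\mathbb{R}$ denote the collection of all cuts, ordered by set inclusion. Totality of this order is immediate: given cuts $A,B$, either $A\subseteq B$, or there is some $q\in A\setminus B$, and then downward closure of $B$ forces $B\subseteq A$. Antisymmetry, reflexivity, and transitivity are those of $\subseteq$.

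Next I would equip $\mathbb{R}$ with field operations. Addition is painless: set $A+B=\{a+b:a\in A,\ b\in B\}$, with zero $0^{*}=\{q\in\mathbb{Q}:q<0\}$ and $-A=\{q\in\mathbb{Q}:(\exists r>q)(-r\notin A)\}$; one checks these are cuts and that the abelian-group axioms hold. Multiplication is the delicate part: one first defines $A\cdot B$ for $A,B\ge 0^{*}$ by $A\cdot B=0^{*}\cup\{ab:a\in A,\ b\in B,\ a,b\ge 0\}$, and then extends it to all of $\mathbb{R}$ by the usual sign rules (e.g. $A\cdot B=-\big((-A)\cdot B\big)$ when $A<0^{*}\le B$, and symmetrically), with $1^{*}=\{q\in\mathbb{Q}:q<1\}$ and, for $A>0^{*}$, inverse $A^{-1}=0^{*}\cup\{q\in\mathbb{Q}_{+}:(\exists r<q^{-1})(r\notin A)\}$. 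Verifying associativity, distributivity, and the inverse law then reduces, after peeling off signs, to the nonnegative case, where it follows from the corresponding identities in $\mathbb{Q}$; compatibility of inclusion with $+$ and $\cdot$ is checked directly, so $\mathbb{R}$ is a totally ordered field (equivalently, one could invoke Definition~\ref{D: Orderable fields} with $\mathbb{R}_{+}=\{A:0^{*}\subsetneq A\}$).

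Finally, and this is the whole point, I would prove Dedekind completeness. Let $\mathcal{A}$ be a nonempty family of cuts bounded above by some cut $B$, and put $S=\bigcup_{A\in\mathcal{A}}A$. Then $S$ is nonempty (it contains any member of $\mathcal{A}$), downward closed (a union of downward closed sets), and has no maximum (any $q\in S$ lies in some $A\in\mathcal{A}$, which has no maximum); it is bounded above because $S\subseteq B\neq\mathbb{Q}$. Hence $S$ is a cut. By construction $A\subseteq S$ for every $A\in\mathcal{A}$, so $S\in\ub(\mathcal{A})$; and if $C$ is any cut with $A\subseteq C$ for all $A\in\mathcal{A}$, then $S=\bigcup A\subseteq C$. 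Thus $S=\sup\mathcal{A}$, and every nonempty bounded-above subset of $\mathbb{R}$ has a supremum, so $\mathbb{R}$ is a Dedekind complete field.

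I expect the main obstacle to be the bookkeeping around multiplication — defining $A^{-1}$ so that it is again a cut with no largest element, and discharging the sign-case analysis in associativity and distributivity — since the completeness argument itself is short once the field structure is in hand. An alternative route avoiding cuts is to take the field of Cauchy sequences of rationals modulo null sequences, but establishing Dedekind completeness there takes more work than with cuts, so I would stay with the cut construction.
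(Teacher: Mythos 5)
Your proposal is correct and follows essentially the same route as the paper, whose ``proof'' of Theorem~\ref{T: exist ded} simply points to the Dedekind-cut construction in Rudin~\cite{poma} (as well as to the Cauchy-sequence construction and to the non-standard construction of \S~\ref{S: construct}); your sketch is precisely the first of these, carried out in outline. The only quibble is cosmetic: as written your $A^{-1}=0^{*}\cup\{q\in\mathbb{Q}_{+}:\dots\}$ omits the rational $0$ and so is not downward closed, but that is a one-symbol fix and does not affect the argument.
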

\begin{proof}
  For the proof of this we refer to either the construction by means of Dedekind cuts in Rudin~\cite{poma} or the construction using equivalence classes of Cauchy sequences in Hewitt \& Stromberg~\cite{hewitt}, or to \S~\ref{S: construct} of this text where we present a construction using non-standard analysis.
\end{proof}

\begin{lemma}\label{L: ded cuts properties}
  Let $\mathbb{A}$ be an Archimedean field and $\mathbb{K}$ be a Dedekind complete field. Define $C_\alpha=:\{q\in\mathbb{Q}: q<\alpha\}$ for $\alpha\in\mathbb{A}$. Then
  \begin{thm-enumerate}
  \item For all $\alpha,\beta\in\mathbb{A}$ we have $\sup_\mathbb{K}(C_{\alpha+\beta})=\sup_\mathbb{K}(C_\alpha)+\sup_\mathbb{K}(C_\beta)$.
  \item For all $\alpha,\beta\in\mathbb{A}$ we have  $\sup_\mathbb{K}(C_{\alpha\beta})=\sup_\mathbb{K}(C_\alpha)\sup_\mathbb{K}(C_\beta)$.
  \end{thm-enumerate}
\end{lemma}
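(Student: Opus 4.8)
The plan is to abbreviate $\hat\gamma := \sup_{\mathbb K}(C_\gamma)$ for $\gamma\in\mathbb A$ and to show that the map $\gamma\mapsto\hat\gamma$ respects addition and multiplication. First I would check that each $\hat\gamma$ is well defined: since $\mathbb A$ is Archimedean the rationals are dense in $\mathbb A$ (a direct consequence of the Archimedean property), so $C_\gamma$ is non-empty and bounded above in $\mathbb Q\subset\mathbb K$ (via the canonical embedding of Remark~\ref{R: q embedding}); as $\mathbb K$ is Dedekind complete, $\hat\gamma$ exists. I will also use the standard fact that a Dedekind complete field is Archimedean, so that $\mathbb Q$ is dense in $\mathbb K$ as well. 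Each of (i) and (ii) will then be proved by establishing the two opposite inequalities separately.

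For (i): to get $\hat\alpha+\hat\beta\le\widehat{\alpha+\beta}$, observe that $q\in C_\alpha$ and $r\in C_\beta$ force $q+r\in C_{\alpha+\beta}$, hence $q+r\le\widehat{\alpha+\beta}$ in $\mathbb K$; fixing $r$ and taking the supremum over $q$ yields $\hat\alpha+r\le\widehat{\alpha+\beta}$, and then taking the supremum over $r$ gives the inequality. For the reverse, take $s\in C_{\alpha+\beta}$, so $s\in\mathbb Q$ and $s-\beta<\alpha$ in $\mathbb A$; density of $\mathbb Q$ in $\mathbb A$ furnishes $q\in\mathbb Q$ with $s-\beta<q<\alpha$, and then $q\in C_\alpha$ while $r:=s-q\in\mathbb Q\cap C_\beta$, so $s=q+r\le\hat\alpha+\hat\beta$; taking the supremum over $s$ completes (i). Signs play no role here.

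Next I would record two easy consequences: $\hat 0=\sup_{\mathbb K}\{q\in\mathbb Q:q<0\}=0$, and therefore, applying (i) to $\alpha$ and $-\alpha$, $\widehat{-\alpha}=-\hat\alpha$ for every $\alpha\in\mathbb A$. With these, the multiplicative identity (ii) reduces to the case $\alpha,\beta>0$ by the usual four-case sign bookkeeping: for instance, if $\alpha>0>\beta$ then $\widehat{\alpha\beta}=-\widehat{\alpha(-\beta)}=-\hat\alpha\,\widehat{-\beta}=\hat\alpha\hat\beta$, and the remaining cases (one factor zero, both negative, $\beta>0>\alpha$) are handled the same way.

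So it remains to prove $\widehat{\alpha\beta}=\hat\alpha\hat\beta$ when $\alpha,\beta>0$; here I would work with positive rationals only, which are cofinal in each of $C_\alpha$, $C_\beta$, $C_{\alpha\beta}$, and note $\hat\alpha,\hat\beta>0$. For $\hat\alpha\hat\beta\le\widehat{\alpha\beta}$: positive rationals $q<\alpha$ and $r<\beta$ give $qr\in C_{\alpha\beta}$, hence $qr\le\widehat{\alpha\beta}$; fixing $r$ and taking the supremum over $q$ gives $r\hat\alpha\le\widehat{\alpha\beta}$, and then taking the supremum over $r$ gives $\hat\alpha\hat\beta\le\widehat{\alpha\beta}$ (both steps using that the relevant factors are positive). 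For $\widehat{\alpha\beta}\le\hat\alpha\hat\beta$: given a positive $s\in C_{\alpha\beta}$, one has $s/\alpha<\beta$, so choose a rational $r$ with $s/\alpha<r<\beta$; then $s/r<\alpha$, so choose a rational $q$ with $s/r<q<\alpha$; now $q\in C_\alpha$, $r\in C_\beta$, and $s<qr\le\hat\alpha\hat\beta$, so taking the supremum over $s$ finishes the proof. I expect this last two-step density approximation — picking $r$ just below $\beta$ and then $q$ just below $\alpha$ so that the product $qr$ still overshoots $s$ — to be the only genuine subtlety; everything else is routine manipulation of suprema, with only a little care that each supremum invoked actually exists and that every division is by a positive element.
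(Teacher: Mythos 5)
Your proposal is correct and takes essentially the same route as the paper: part (i) by decomposing each element of $C_{\alpha+\beta}$ as a sum of a rational below $\alpha$ and one below $\beta$ via density of $\mathbb{Q}$ in the Archimedean field $\mathbb{A}$, and part (ii) by reducing to positive $\alpha,\beta$ through $\sup_{\mathbb K}(C_{-\alpha})=-\sup_{\mathbb K}(C_\alpha)$ followed by the same two-step rational approximation with positive rationals. The only cosmetic differences are that you derive $\sup_{\mathbb K}(C_{-\alpha})=-\sup_{\mathbb K}(C_\alpha)$ as a corollary of part (i) (using $\sup_{\mathbb K}(C_0)=0$, which needs $\mathbb{K}$ Archimedean) where the paper argues it directly, and that in the reverse inequality of (ii) you settle for $s<qr$ (cofinality) where the paper produces an exact factorization $p=uv$; both suffice.
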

\begin{proof}
  First note that, as both $\mathbb{A}$ and a $\mathbb{K}$ are ordered fields, they each contain a copy of $\mathbb{Q}$ so that our claim actually makes sense.
  \begin{thm-enumerate}
  \item We only need to show that $C_{\alpha+\beta}=C_\alpha+C_\beta$, where $C_\alpha+C_\beta=:\{q+p:q\in C_\alpha,p\in C_\beta\}$, and the rest will follow from properties of supremum. It should be clear that $C_\alpha+C_\beta\subseteq C_{\alpha+\beta}$, thus suppose $p\in C_{\alpha+\beta}$. Let $r\in\mathbb{Q}_+$ and $u\in\mathbb{Q}$ be such that $0<r<\alpha+\beta - p$ and $\alpha-r<u<\alpha$ (which is possible because the rationals are dense in any Archimedean field). Then $p-u<p+r-\alpha<\beta$, and as $p,u\in\mathbb{Q}$, we know that $p-u\in\mathbb{Q}$ so if we define $v=:p-u$, then $u\in C_\alpha, v\in C_\beta$ and $u+v=p$.
  \item To show this, we will first prove the case when both $\alpha$ and $\beta$ are positive. Let $P_\gamma=:C_\gamma\cap \mathbb{Q}_+$ for $\gamma\in\mathbb{A}$ and suppose both $\alpha$ and $\beta$ are positive; then $\sup_\mathbb{K}(C_\alpha)=\sup_\mathbb{K}(P_\alpha)$ and $\sup_\mathbb{K}(C_\beta)=\sup_\mathbb{K}(P_\beta)$. As before, we will show that $P_{\alpha\beta}=P_\alpha\cdot P_\beta=:\{qp : q\in P_\alpha,p\in P_\beta\}$ from which the desired result will follow by basic properties of supremum. Note that we clearly have $P_\alpha\cdot P_\beta\subseteq P_{\alpha\beta}$. Suppose $p\in P_{\alpha\beta}$, let $u\in\mathbb{Q}$ be such that $\frac{p}{\beta}<u<\alpha$ and define $v=:\frac{p}{u}$. Then $v<\frac{p}{p/\beta}=\beta$ so that $u\in P_\alpha, v\in P_\beta$ and $uv=p$; thus $p\in P_\alpha\cdot P_\beta$. Therefore $\sup_\mathbb{K}(C_\alpha)\sup_\mathbb{K} (C_\beta)=\sup_\mathbb{K}(C_{\alpha\beta})$

    For the remaining cases, first note that if $q<\alpha$, then $-\alpha<-q$ so that $\sup_\mathbb{K}(C_{-\alpha})<-q$, thus $q<-\sup_\mathbb{K}(C_{-\alpha})$ which implies that $\sup_\mathbb{K}(C_{\alpha})\le -\sup_\mathbb{K}(C_{-\alpha})$; hence, by symmetry, $\sup_\mathbb{K}(C_{-\alpha})=-\sup_\mathbb{K}(C_\alpha)$. So, if either $\alpha$ or $\beta$ are zero, then $\sup_\mathbb{K}(C_0)=-\sup_\mathbb{K}(C_0)$ so that $\sup_\mathbb{K}(C_0)=0$ and thus our desired result holds. If instead, both $\alpha$ and $\beta$ are negative, then we have $\sup_\mathbb{K}(C_\alpha)\sup_\mathbb{K}(C_\beta)= \sup_\mathbb{K}(C_{-\alpha})\sup_\mathbb{K}(C_{-\beta}) =\sup_\mathbb{K}(C_{(-\alpha)(-\beta)}) =\sup_\mathbb{K}(C_{\alpha\beta})$. Otherwise, if $\alpha\beta<0$, then without loss of generality, we can assume that $\alpha>0$ and $\beta<0$ so that we have $\sup_\mathbb{K}(C_\alpha)\sup_\mathbb{K}(C_\beta)=-\sup_\mathbb{K}(C_\alpha)\sup_\mathbb{K}(C_{-\beta})=-\sup_\mathbb{K}(C_{-\alpha\beta})=\sup_\mathbb{K}(C_{\alpha\beta})$.
  \end{thm-enumerate}
\end{proof}

\begin{theorem}\label{T: arch embed ded}
  Let $\mathbb{A}$ be a totally ordered Archimedean field and let $\mathbb{K}$ be a totally ordered Dedikind complete field. Then the mapping $\sigma:\mathbb{A}\to\mathbb{K}$ given by $\sigma(\alpha)=:\sup_\mathbb{K}(C_\alpha)$, where $C_\alpha=:\{q\in\mathbb{Q}: q<\alpha\}$, is an order field embedding of $\mathbb{A}$ into $\mathbb{K}$.
\end{theorem}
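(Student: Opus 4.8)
The plan is to verify in turn that $\sigma$ is well defined, that it is a field homomorphism, that it preserves the order, and that it is injective; the real substance has already been isolated in Lemma~\ref{L: ded cuts properties}, so what remains is bookkeeping, the only genuinely delicate point being where the Archimedean hypothesis on $\mathbb{A}$ enters.

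First I would check that $\sigma$ is well defined. Fix $\alpha\in\mathbb{A}$. Using the canonical embedding of Remark~\ref{R: q embedding} we regard $\mathbb{Q}$ as a common subfield of both $\mathbb{A}$ and $\mathbb{K}$, so that $C_\alpha=\{q\in\mathbb{Q}:q<\alpha\}$ is literally a subset of $\mathbb{K}$. Since $\mathbb{A}$ is Archimedean, $\mathbb{Q}$ is dense in $\mathbb{A}$ (if $0<y-x$ in $\mathbb{A}$, choose $n\in\mathbb{N}$ with $\tfrac1n<y-x$ and then an integer multiple of $\tfrac1n$ lying strictly between $x$ and $y$); in particular there is a rational below $\alpha$ and a rational above $\alpha$, so $C_\alpha$ is non-empty and bounded above in $\mathbb{K}$. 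By Dedekind completeness of $\mathbb{K}$ the supremum $\sup_{\mathbb{K}}(C_\alpha)$ exists, so $\sigma(\alpha)$ is a well-defined element of $\mathbb{K}$. Note also that for a rational $r$ one has $\sigma(r)=\sup_{\mathbb{K}}\{q\in\mathbb{Q}:q<r\}=r$, so $\sigma$ extends the canonical embedding of $\mathbb{Q}$ into $\mathbb{K}$; in particular $\sigma(1)=1$.

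Next I would observe that $\sigma$ is a field homomorphism: additivity, $\sigma(\alpha+\beta)=\sigma(\alpha)+\sigma(\beta)$, is exactly part $(i)$ of Lemma~\ref{L: ded cuts properties}, and multiplicativity, $\sigma(\alpha\beta)=\sigma(\alpha)\sigma(\beta)$, is part $(ii)$; together with $\sigma(1)=1$ this is all that is required. For the order, if $\alpha\le\beta$ then $C_\alpha\subseteq C_\beta$, hence $\sigma(\alpha)=\sup_{\mathbb{K}}(C_\alpha)\le\sup_{\mathbb{K}}(C_\beta)=\sigma(\beta)$, so $\sigma$ is order preserving. Finally, for injectivity, the kernel of the ring homomorphism $\sigma$ is an ideal of the field $\mathbb{A}$ not containing $1$, hence is $\{0\}$; equivalently, combining order preservation with injectivity, $\alpha<\beta$ forces $\sigma(\alpha)<\sigma(\beta)$, and one can also see this directly by using density to pick rationals $r<s$ with $\alpha<r<s<\beta$, whence $\sigma(\alpha)\le r<s\le\sigma(\beta)$. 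Thus $\sigma$ is an injective order-preserving field homomorphism, i.e. an order field embedding of $\mathbb{A}$ into $\mathbb{K}$.

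The only place anything could go wrong is the well-definedness step, and this is precisely where the Archimedean hypothesis on $\mathbb{A}$ is indispensable: without density of $\mathbb{Q}$ in $\mathbb{A}$ the cut $C_\alpha$ could fail to be bounded above in $\mathbb{K}$ (if $\alpha$ is infinitely large) or to distinguish distinct elements of $\mathbb{A}$, and the construction would collapse. Everything downstream of that is a direct appeal to Lemma~\ref{L: ded cuts properties} and to elementary properties of suprema, so I would not expect a genuine obstacle there.
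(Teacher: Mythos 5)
Your proposal is correct and follows essentially the same route as the paper: well-definedness of $\sigma$ via the Archimedean property (rationals above and below $\alpha$), the homomorphism property delegated to Lemma~\ref{L: ded cuts properties}, and strict order preservation (hence injectivity) via density of $\mathbb{Q}$ in $\mathbb{A}$. Your explicit two-rationals argument $\sigma(\alpha)\le r<s\le\sigma(\beta)$ is in fact slightly more careful than the paper's one-line ``$C_\alpha\neq C_\beta$, therefore $\sigma(\alpha)<\sigma(\beta)$,'' but it is the same idea.
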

\begin{proof}
  Recall that $\mathbb{Q}$ can be embedded into both $\mathbb{A}$ and $\mathbb{K}$ as they are ordered fields. Note that for $\alpha\in\mathbb{A}$, there exists $q,p\in\mathbb{Q}$ such that $q<\alpha<p$ by the Archimedean property; thus the set $C_\alpha$ is both non-empty and bounded from above in $\mathbb{A}$ and $\mathbb{K}$. Now let $\alpha,\beta\in\mathbb{A}$. To show that $\sigma$ preserves order, suppose $\alpha<\beta$; then $C_\alpha\subseteq C_\beta$, and because $\mathbb{Q}$ is dense in every Archimedean field, $C_\alpha\neq C_\beta$, therefore $\sigma(\alpha)<\sigma(\beta)$. The fact that $\sigma$ is an isomorphism follows from Lemma~\ref{L: ded cuts properties} above.
\end{proof}

\begin{corollary}\label{C: ded order iso}
  All Dedekind complete fields are mutually order-isomorphic. Consequently they have the same cardinality, which is usually denoted by $\mathfrak c$.
\end{corollary}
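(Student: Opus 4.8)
The plan is to reduce everything to Theorem~\ref{T: arch embed ded}, which already produces an order embedding of any Archimedean field into any Dedekind complete field via rational cuts. Two ingredients are missing: (a) a Dedekind complete field is itself Archimedean, so that the theorem applies with a Dedekind complete field in the source slot; and (b) the particular embedding $\sigma$ built from rational cuts is onto when the source is also Dedekind complete.

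First I would prove that every Dedekind complete totally ordered field $\mathbb{K}$ is Archimedean. If it were not, then by Proposition~\ref{P: 1 arch} there is an infinitely large element, so the canonical copy of $\mathbb{N}$ (see Remark~\ref{R: q embedding}) is bounded above in $\mathbb{K}$; let $s$ be its supremum, which exists by Dedekind completeness. Then $s-1$ is not an upper bound, so $s-1<n$ for some $n\in\mathbb{N}$, whence $s<n+1$, contradicting that $s$ is an upper bound. Hence $\mathbb{K}$ is Archimedean.

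Now let $\mathbb{K}_1$ and $\mathbb{K}_2$ be two Dedekind complete fields; by the previous step both are Archimedean, so Theorem~\ref{T: arch embed ded} gives an order field embedding $\sigma\colon\mathbb{K}_1\to\mathbb{K}_2$ with $\sigma(\alpha)=\sup_{\mathbb{K}_2}C_\alpha$, where $C_\alpha=\{q\in\mathbb{Q}:q<\alpha\}$. It remains to show $\sigma$ is surjective. Given $\beta\in\mathbb{K}_2$, the set $C=\{q\in\mathbb{Q}:q<\beta\}$ is non-empty and bounded above in $\mathbb{K}_1$ by the Archimedean property of $\mathbb{K}_2$, so $\alpha=\sup_{\mathbb{K}_1}C$ exists. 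I would then check, using density of $\mathbb{Q}$ in Archimedean fields, that for $q\in\mathbb{Q}$ one has $q<\alpha$ in $\mathbb{K}_1$ iff $q<\beta$ in $\mathbb{K}_2$, i.e.\ $C_\alpha=C$; consequently $\sigma(\alpha)=\sup_{\mathbb{K}_2}C=\beta$, the last equality holding because $\beta=\sup_{\mathbb{K}_2}\{q\in\mathbb{Q}:q<\beta\}$ in any Archimedean field. Thus $\sigma$ is an order isomorphism. Since an order isomorphism is a bijection, all Dedekind complete fields are equinumerous; by Theorem~\ref{T: exist ded} one exists, so we may denote this common cardinality by $\mathfrak c$.

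The main obstacle I anticipate is the surjectivity step, specifically the clean verification that $C_\alpha=C$: one must rule out the boundary case $q=\alpha\in C$, which is where density of the rationals in an Archimedean field does the work. By contrast, the Archimedean-ness of Dedekind complete fields, though genuinely needed here, is routine.
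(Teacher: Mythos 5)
Your proposal is correct and follows essentially the same route as the paper: the paper's own proof simply applies the embedding $\sigma$ of Theorem~\ref{T: arch embed ded} and asserts that $\sup_{\mathbb{K}}C_k\mapsto k$, which is exactly your surjectivity argument spelled out. You also supply two details the paper leaves implicit at this point, namely that Dedekind completeness forces the Archimedean property (the paper only records this later, as Lemma~\ref{L: order -> arch}) and the verification that $C_\alpha=C$, both of which are handled correctly.
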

\begin{proof}
  Let $\mathbb{K}$ and $\mathbb{F}$ be Dedekind complete fields. Using the mapping $\sigma:\mathbb{K}\to\mathbb{F}$ from the preceding proof, for any $k\in \mathbb{F}$ it is easy to see that $\sup_\mathbb{K} C_k$ maps to $k$.
\end{proof}

\begin{corollary}\label{C: arch card}
  Every Archimedean field has cardinality at most $\mathfrak c$.
\end{corollary}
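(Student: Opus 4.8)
The plan is to read off the result directly from the embedding theorem already established. By Theorem~\ref{T: exist ded} there exists a Dedekind complete field $\mathbb{K}$, and by Corollary~\ref{C: ded order iso} every such field has cardinality exactly $\mathfrak c$. So it suffices to exhibit an injection from an arbitrary Archimedean field into $\mathbb{K}$.

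First I would let $\mathbb{A}$ be an arbitrary totally ordered Archimedean field. Then I would invoke Theorem~\ref{T: arch embed ded} to get the order field embedding $\sigma:\mathbb{A}\to\mathbb{K}$ given by $\sigma(\alpha)=\sup_{\mathbb{K}}(C_\alpha)$. Since $\sigma$ is in particular injective (being an order embedding, or a nonzero field homomorphism), we conclude $\card(\mathbb{A})\le\card(\mathbb{K})=\mathfrak c$.

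One small point worth addressing: Definition~\ref{D: Orderable fields} and the surrounding discussion always take ``field'' to mean a field that is being equipped with an order, so ``Archimedean field'' here means ``totally ordered Archimedean field,'' which is exactly the hypothesis needed to apply Theorem~\ref{T: arch embed ded}. There is essentially no obstacle here — the entire content has been front-loaded into Theorem~\ref{T: arch embed ded} and Corollary~\ref{C: ded order iso}; the only thing to be careful about is simply noting that an order embedding is injective so that cardinalities compare in the right direction.
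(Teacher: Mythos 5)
Your proof is correct and is exactly the argument the paper intends (the corollary is stated without proof, but it is meant to follow immediately from Theorem~\ref{T: arch embed ded} and Corollary~\ref{C: ded order iso}): embed the Archimedean field into a Dedekind complete field via $\sigma$, note injectivity of the order embedding, and use that Dedekind complete fields have cardinality $\mathfrak c$. Nothing is missing.
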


The next result shows that an Archimedean field can never be order-isomorphic to one of its proper subfields.

\begin{theorem}
  Let $\mathbb{K}$ be a totally ordered Archimedean field and $\mathbb{F}$ be a subfield of $\mathbb{K}$. If $\sigma:\mathbb{K}\to\mathbb{F}$ is an order-isomorphism between $\mathbb{K}$ and $\mathbb{F}$, then $\mathbb{K}=\mathbb{F}$ and $\sigma=id_\mathbb{K}$.
\end{theorem}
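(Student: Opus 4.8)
The plan is to exploit two facts: every ordered field homomorphism is the identity on the canonical copy of $\mathbb{Q}$, and $\mathbb{Q}$ is dense in every Archimedean field. First I would observe that since $\sigma:\mathbb{K}\to\mathbb{F}\subseteq\mathbb{K}$ is in particular a field homomorphism and both fields have characteristic $0$, $\sigma$ must fix the prime subfield; concretely, $\sigma(1)=1$ forces $\sigma(n\cdot 1)=n\cdot 1$ for $n\in\mathbb{N}$, hence $\sigma(q)=q$ for every $q$ in the canonical image of $\mathbb{Q}$ (Remark~\ref{R: q embedding}), using that $\sigma$ respects sums, negatives, and quotients.

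Next I would fix an arbitrary $x\in\mathbb{K}$ and compare the rationals below $x$ with those below $\sigma(x)$. For any $q\in\mathbb{Q}$ with $q<x$, order preservation and $\sigma(q)=q$ give $q=\sigma(q)<\sigma(x)$ (the inequality is strict because $\sigma$ is an isomorphism, hence injective and in fact order-reflecting); symmetrically $q>x$ implies $q>\sigma(x)$. Thus $x$ and $\sigma(x)$ have exactly the same set of rational lower bounds.

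Now suppose toward a contradiction that $\sigma(x)\neq x$, say $\sigma(x)<x$ (the other case is symmetric). Since $\mathbb{K}$ is Archimedean, $\mathbb{Q}$ is dense in $\mathbb{K}$, so there is a rational $q$ with $\sigma(x)<q<x$; but $q<x$ then forces $q<\sigma(x)$ by the previous paragraph, contradicting $\sigma(x)<q$. Hence $\sigma(x)=x$ for all $x\in\mathbb{K}$, i.e.\ $\sigma=id_\mathbb{K}$, and therefore $\mathbb{F}=\sigma(\mathbb{K})=\mathbb{K}$.

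I do not expect a serious obstacle here; the only points requiring a little care are justifying strict inequality (using injectivity of the isomorphism together with totality of the order, as in Lemma~\ref{L: ivt}'s final step) and invoking density of $\mathbb{Q}$, which is exactly where the Archimedean hypothesis enters and without which the statement fails. If one wanted to be economical, the argument could even be phrased as: $\sigma$ agrees with $id_\mathbb{K}$ on the dense subset $\mathbb{Q}$ and both are continuous in the order topology (Lemma~\ref{P: cont} gives continuity of $id$, and order-isomorphisms are automatically continuous), so they coincide everywhere.
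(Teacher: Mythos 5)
Your proposal is correct and follows essentially the same route as the paper: both arguments rest on $\sigma$ fixing the canonical copy of $\mathbb{Q}$ and on the density of $\mathbb{Q}$ in an Archimedean field to trap a rational strictly between $x$ and $\sigma(x)$ whenever they differ. The only cosmetic difference is that the paper first shows $\sigma(a)\ge a$ by viewing $a$ as $\sup_{\mathbb{K}}\{q\in\mathbb{Q}:q<a\}$ and then excludes $\sigma(a)>a$, whereas you treat the two cases symmetrically by comparing rational lower bounds directly; this changes nothing of substance.
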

\begin{proof}
  Suppose $\sigma:\mathbb{K}\to\mathbb{F}$ is an order-preserving isomorphism. Note that, as an isomorphism, $\sigma$ fixes the rationals. Let $a\in\mathbb{K}$ and $A=:\{q\in\mathbb{Q}:q<a\}$. Recall that the rationals are dense in an Archimedean field, hence we have $a=\sup_\mathbb{K}A$. Then because $\sigma$ is order preserving and fixes $\mathbb{Q}$, we know that $\sigma(a)\in\ub(A)$ and thus $\sigma(a)\ge a$. To show that $\sigma(a)=a$, suppose, to the contrary that $\sigma(a)>a$. Then we can find a rational $q$ such that $a<q<\sigma(a)$, but $\sigma$ is order-preserving, so $\sigma(a)<\sigma(q)=q$, a contradiction. Therefore $\sigma=id_\mathbb{K}$ and $\mathbb{K}=\mathbb{F}$.
\end{proof}

As a counter-example to the preceding theorem for when $\mathbb{K}$ is non-Archimedean we have the field of rational functions $\mathbb{R}(x)$.

\begin{example}
  Let $\mathbb{R}(x)$ be the field of rational functions over $\mathbb{R}$ with indeterminate $x$ and supply the field with an ordering given by $f< g$ if and only if there exists a $N\in\mathbb{N}$ such that $g(x)-f(x)>0$ for all $x\in\mathbb{R}, x\ge N$. Then the field $\mathbb{R}(x^2)$ is a proper subfield of $\mathbb{R}(x)$ which is order-isomorphic to $\mathbb{R}(x)$ under the map $\sigma:\mathbb{R}(x)\to \mathbb{R}(x^2)$ given by $\sigma(f(x))=f(x^2)$ for all $f(x)\in\mathbb{R}(x)$.
\end{example}

\begin{lemma}\label{L: order -> arch}
  Let $\mathbb{K}$ be a Dedekind complete ordered field, then $\mathbb{K}$ is Archimedean.
\end{lemma}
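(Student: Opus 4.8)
The plan is to prove the statement by contradiction: assume $\mathbb{K}$ is Dedekind complete but \emph{not} Archimedean, and derive a contradiction from the (non)existence of $\sup\mathbb{N}$.

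First I would unwind the failure of the Archimedean property. By Definition~\ref{D: Archimedean Field}, if $\mathbb{K}$ is not Archimedean then there is some $x\in\mathbb{K}$ with $n\le |x|$ for all $n\in\mathbb{N}$ (equivalently, by Proposition~\ref{P: 1 arch}, $\mathcal{L}(\mathbb{K})\neq\emptyset$). Identifying $\mathbb{N}$ with its image under the canonical embedding $\sigma:\mathbb{Q}\to\mathbb{K}$ of Remark~\ref{R: q embedding}, this says precisely that $\sigma(\mathbb{N})$ is a non-empty subset of $\mathbb{K}$ that is bounded above, with $|x|\in\ub(\sigma(\mathbb{N}))$.

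Next I would invoke Dedekind completeness to produce $s=:\sup\sigma(\mathbb{N})\in\mathbb{K}$, and then exploit that $\sigma(\mathbb{N})$ is closed under adding $1$. Since $s-1<s$, the element $s-1$ is not an upper bound of $\sigma(\mathbb{N})$, so there is $n\in\mathbb{N}$ with $\sigma(n)>s-1$; as $\sigma(n+1)=\sigma(n)+1$ and $1>0$ in any ordered field, this yields $\sigma(n+1)>s$ with $\sigma(n+1)\in\sigma(\mathbb{N})$, contradicting that $s\in\ub(\sigma(\mathbb{N}))$. Hence no such $x$ exists, i.e.\ for every $x\in\mathbb{K}$ there is $n\in\mathbb{N}$ with $|x|<n$, which is exactly the Archimedean property.

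I do not expect any genuine obstacle here: the argument is essentially the classical one-liner ``$\sup\mathbb{N}$ cannot exist.'' The only points deserving attention are bookkeeping ones — phrasing everything in terms of the embedded copy $\sigma(\mathbb{N})\subset\mathbb{K}$ rather than $\mathbb{N}$ itself, and recording that $\sigma$ is a strictly order-preserving ring embedding, so that $\sigma(n+1)=\sigma(n)+1$ and $\sigma(n)<\sigma(n+1)$ (both immediate from Remark~\ref{R: q embedding} together with $1\in\mathbb{K}_+$).
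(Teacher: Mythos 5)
Your proposal is correct and is essentially the paper's own argument: assume non-Archimedean, conclude $\mathbb{N}$ is bounded above in $\mathbb{K}$, take $s=\sup\mathbb{N}$, and observe that $s-1$ fails to be an upper bound, forcing some $n$ with $n+1>s$, a contradiction. The extra care you take with the canonical embedding $\sigma$ is harmless bookkeeping that the paper simply suppresses.
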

\begin{proof}
  Suppose, to the contrary, that $\mathbb{K}$ is non-Archimedean, then $\mathbb{N}\subset\mathbb{K}$ is bounded from above. Let $\alpha\in\mathbb{K}$ be the least upper bound of $\mathbb{N}$. Then it must be that, for some $n\in\mathbb{N}$, $\alpha-1< n$, thus $\alpha< n+1$, a contradiction.
\end{proof}
                
The following theorem, shows that, under the assumption that we are working with a totally ordered Archimedean field, all of the forms of completeness listed in Definition~\ref{D: completeness} are in fact equivalent. Later (in \S~\ref{S: non-arch completeness}) we will examine these properties without the assumption that the field is Archimedean, to find that this equivalence does not necessarily hold.

\begin{theorem}[Completeness of an Archimedean Field]\label{T: BIG ONE} Let $\mathbb{K}$ be a totally ordered Archimedean field. Then the following are equivalent. 
  \begin{thm-enumerate}
    \item $\mathbb{K}$ is Cantor $\kappa$-complete for any infinite cardinal $\kappa$.
     
    \item $\mathbb{K}$ is Leibniz complete (see remark~\ref{R: NSA Unique} for uniqueness).
                     
    \item $\mathbb{K}$ is monotone complete.
                     
    \item $\mathbb{K}$ is Cantor complete (i.e. Cantor $\aleph_1$-complete, not for all cardinals).
                     
    \item $\mathbb{K}$ is Bolzano-Weierstrass complete.
      
    \item $\mathbb{K}$ is Bolzano complete.
                     
    \item $\mathbb{K}$ is sequentially complete.
                     
    \item $\mathbb{K}$ is Dedekind complete.

    \item $\mathbb{K}$ is Hilbert complete.
  \end{thm-enumerate}
\end{theorem}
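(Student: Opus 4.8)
The plan is to establish enough implications among the nine statements to make the implication digraph strongly connected. Concretely I would prove
\[(viii)\Rightarrow(i)\Rightarrow(iv)\Rightarrow(vii)\Rightarrow(viii),\]
together with the branches $(viii)\Rightarrow(iii)\Rightarrow(v)\Rightarrow(vi)\Rightarrow(vii)$, $(viii)\Rightarrow(ii)\Rightarrow(iv)$, and $(viii)\Leftrightarrow(ix)$. Every node then has a directed path to $(viii)$ and $(viii)$ reaches every node, so all nine are equivalent. Apart from the two nonstandard steps and the Hilbert step, everything is routine $\epsilon$-$N$ analysis; the one structural thing to watch is that the Archimedean hypothesis is genuinely invoked exactly where a quantity of the form $1/k$ or $(b_0-a_0)/2^n$ must be forced below an arbitrary positive $\epsilon$ — which is why $(iv)\Rightarrow(vii)$ and $(vii)\Rightarrow(viii)$ break in the non-Archimedean case, and it is there I expect the only real care to be needed.

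For the main cycle: $(viii)\Rightarrow(i)$ is the observation that if $\{[a_\gamma,b_\gamma]\}_\gamma$ has the finite intersection property, then applying the F.I.P.\ to pairs gives $a_\gamma\le b_\delta$ for all $\gamma,\delta$, so each $b_\delta\in\ub(\{a_\gamma\})$; hence $c=\sup\{a_\gamma\}$ exists and satisfies $a_\gamma\le c\le b_\delta$ for all indices, i.e. $c\in\bigcap_\gamma[a_\gamma,b_\gamma]$ (no cardinality bound is used). $(i)\Rightarrow(iv)$ is trivial. For $(iv)\Rightarrow(vii)$, from a Cauchy sequence choose $N_k$ with $|a_n-a_m|<1/k$ for $n,m\ge N_k$ and set $J_k=\bigcap_{j\le k}[a_{N_j}-1/j,\,a_{N_j}+1/j]$: a nested sequence of closed bounded intervals, each containing the tail $\{a_n:n\ge N_k\}$ and so nonempty, hence with a common point $c$; then $|a_n-c|\le|a_n-a_{N_k}|+|a_{N_k}-c|<2/k$ for $n\ge N_k$ gives $a_n\to c$ by the Archimedean property. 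For $(vii)\Rightarrow(viii)$, given nonempty $A$ bounded above, start from $a_0$ strictly below some element of $A$ and $b_0\in\ub(A)$, bisect, always keeping $b_{n+1}\in\ub(A)$ and $a_{n+1}\notin\ub(A)$; then $b_n-a_n=(b_0-a_0)/2^n\to0$, both $\{a_n\}$ and $\{b_n\}$ are Cauchy, and their common limit $c$ is checked to be an upper bound of $A$ and the least one, i.e. $c=\sup A$.

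For the remaining branches: $(viii)\Rightarrow(iii)$ because a bounded increasing (resp. decreasing) sequence converges to the supremum (resp. infimum) of its range. $(iii)\Rightarrow(v)$ uses the purely order-theoretic fact that every sequence in a totally ordered set has a monotone subsequence (peak / no-peak dichotomy); a subsequence of a bounded sequence is bounded, so it converges by $(iii)$. $(v)\Rightarrow(vi)$: extract a sequence of distinct points from a bounded infinite set, take a convergent subsequence by $(v)$, and note its limit is a cluster point of the set (every neighbourhood contains cofinitely many of the distinct subsequence terms, at most one of which is the limit). $(vi)\Rightarrow(vii)$: a Cauchy sequence with finite range is eventually constant, while one with infinite range is bounded, its range has a cluster point $c$, and the Cauchy estimate together with ``infinitely many terms near $c$'' gives $a_n\to c$.

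Finally the nonstandard and Hilbert pieces. $(viii)\Rightarrow(ii)$: for $\alpha\in\mathcal F({}^*\mathbb K)$ set $L=\sup\{x\in\mathbb K:x<\alpha\}$, which exists since $\alpha$ finite makes this set nonempty and bounded above; if $\alpha-L$ were not infinitesimal, a standard $1/m$ with $|\alpha-L|\ge 1/m$ would let $L+\tfrac1{2m}$ or $L-\tfrac1{2m}$ contradict the definition of $L$; uniqueness follows since the difference of two candidate standard parts is a standard infinitesimal, hence $0$ by Proposition~\ref{P: 1 arch} — the only use of Archimedeanness here, and the content of the uniqueness clause (Remark~\ref{R: NSA Unique}). $(ii)\Rightarrow(iv)$: for a nested sequence $[a_n,b_n]$ and infinite $N\in{}^*\mathbb N$, transfer shows ${}^*a_N$ is finite, write ${}^*a_N=c+dx$; transferring ``$\forall m,k\in\mathbb N\,(m\le k\Rightarrow a_m\le a_k\le b_m)$'' and using that $c,a_n,b_n$ are standard while $dx$ is infinitesimal yields $a_n\le c\le b_n$ for every standard $n$, so $c\in\bigcap_n[a_n,b_n]$. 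For $(viii)\Rightarrow(ix)$, if $\mathbb F\supseteq\mathbb K$ is a totally ordered Archimedean extension, Theorem~\ref{T: arch embed ded} gives an order embedding $\tau:\mathbb F\to\mathbb K$ whose restriction to $\mathbb K$ is an order embedding of $\mathbb K$ into itself fixing $\mathbb Q$, hence the identity by density of $\mathbb Q$ in the Archimedean field $\mathbb K$; then $\tau(\tau(\beta))=\tau(\beta)$ for $\beta\in\mathbb F$, so $\beta=\tau(\beta)\in\mathbb K$ by injectivity, i.e. $\mathbb F=\mathbb K$. For $(ix)\Rightarrow(viii)$, by Theorems~\ref{T: exist ded} and~\ref{T: arch embed ded} the field $\mathbb K$ order-embeds into a Dedekind complete field $\mathbb R$, which is Archimedean by Lemma~\ref{L: order -> arch}; thus $\mathbb R$ is an Archimedean extension of $\mathbb K$, and Hilbert completeness forces $\mathbb R=\mathbb K$. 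The main obstacle, as flagged above, is the bookkeeping: making sure the bridging sequences in $(iv)\Rightarrow(vii)$ and $(vii)\Rightarrow(viii)$ really are Cauchy (precisely where Archimedeanness is indispensable), and that Transfer in $(viii)\Rightarrow(ii)$ and $(ii)\Rightarrow(iv)$ is applied to the correct first-order statement with $\mathbb K$ genuinely sitting inside ${}^*\mathbb K$.
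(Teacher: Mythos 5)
Your proposal is correct, but it is organized quite differently from the paper's proof. The paper establishes the single cycle $(i)\Rightarrow(ii)\Rightarrow\cdots\Rightarrow(ix)\Rightarrow(i)$, so each property is proved from its immediate predecessor; you instead build a hub around $(viii)$ with several short branches, which changes which individual implications must be supplied. Several of your local arguments differ from the paper's in substance: you get $(viii)\Rightarrow(ii)$ directly by taking $L=\sup\{x\in\mathbb K: x<\alpha\}$, whereas the paper derives $(ii)$ from Cantor $\kappa$-completeness with $\kappa=\card(\mathbb K)^+$ (which is precisely why its item $(i)$ must be stated for all cardinals); you prove $(iii)\Rightarrow(v)$ via the monotone-subsequence (peak point) lemma, while the paper reaches $(v)$ from $(iv)$ by bisection; you prove $(iv)\Rightarrow(vii)$ directly from nested intervals around Cauchy tails, while the paper routes through $(v)$ and $(vi)$; and your $(ii)\Rightarrow(iv)$ extracts the standard part of ${}^*a_N$ for infinite $N$, whereas the paper uses $(ii)$ to prove monotone completeness. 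Your $(vii)\Rightarrow(viii)$ is the classical bisection construction of the supremum rather than the Hewitt--Stromberg dyadic minimization the paper quotes, and your $(viii)\Leftrightarrow(ix)$ is essentially the paper's argument (the idempotence trick $\tau(\tau(\beta))=\tau(\beta)$ is a nice way to phrase it). What your decomposition buys is that each edge is proved by the most natural direct argument and the role of the Archimedean hypothesis is isolated exactly where you flag it; what the paper's single cycle buys is economy --- nine implications instead of twelve --- at the cost of some less obvious individual steps. Two small bookkeeping points you should make explicit in a write-up: in $(iv)\Rightarrow(vii)$ the $N_k$ should be chosen increasing so that the tails really are nested, and in $(ii)\Rightarrow(iv)$ a countable F.I.P.\ family must first be replaced by the nested sequence of partial intersections (as the paper does in its $(iii)\Rightarrow(iv)$) before transferring.
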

 
\begin{proof}
  \mbox{} 
  \begin{description}
  \item[$(i)\Rightarrow(ii)$:] Let $\kappa$ be the successor of $\card(\mathbb{K})$. As well, let $\alpha\in\mathcal F(^*\mathbb K)$ and $S=:\{[a,b]:a,b\in\mathbb K\emph{ and } a\le \alpha\le b\textrm{ in }{^*\mathbb K}\}$. Clearly $S$ satisfies the finite intersection property and $\card(S)=\card(\mathbb{K}\times\mathbb{K})=\card(\mathbb{K})<\kappa$; thus, by assumption, there exists $L\in \bigcap_{[a,b]\in S} [a,b]$. To show $\alpha-L\in\mathcal I(^*\mathbb K)$, suppose, to the contrary, that $\alpha-L\not\in\mathcal I(^*\mathbb K)$, i.e. $\frac{1}{n}<|\alpha-L|$ for some $n\in\mathbb N$. Then either $\alpha<L-\frac{1}{n}$ or $L+\frac{1}{n}<\alpha$. However the former implies $L\le L-\frac{1}{n}$ and the latter implies $L+\frac{1}{n}\le L$. In either case we reach a contradiction, therefore $\alpha-L\in\mathcal I(^*\mathbb K)$.
 
  \item[$(ii)\Rightarrow(iii)$:] Let $\{x_n\}_{n\in\mathbb N}$ be a bounded monotonic sequence in $\mathbb K$; without loss of generality, we can assume that $\{x_n\}$ is increasing. We denote the non-standard extension of $\{x_n\}_{n\in\mathbb N}$ by $\{^*x_{\nu}\}_{\nu \in {^*\mathbb N}}$. Observe that, by the Transfer Principle (see Davis~\cite{davis}), $\{^*x_{\nu}\}$ is increasing as $\{x_n\}$ is increasing. Also (by the Transfer Principle), for any $b\in\ub(\{x_n\})$ we have $(\forall \nu\in{^*\mathbb N})(^*x_{\nu}\le b)$. Now choose $\nu \in\mathcal L(^*\mathbb N)$. Then $^*x_{\nu}\in\mathcal F(^*\mathbb K)$, because $\{^*x_{\nu}\}$ is bounded by a standard number; thus, there exists $L\in\mathbb K$ such that $L\approx {^*x_{\nu}}$ by assumption. Since $\{^*x_{\nu}\}$ is increasing, it follows that $L\in\ub(\{x_n\})$. To show that $x_n\to L$, suppose that it does not. Then, there exists $\epsilon\in\mathbb K_+$ such that $(\forall n\in\mathbb N)(L-x_n\ge\epsilon)$. Thus, we have $L-\epsilon\in\ub(\{x_n\})$, which implies $^*x_{\nu}\le L-\epsilon$ (by the transfer principle) contradicting $^*x_{\nu}\approx L$.
 
  \item[$(iii)\Rightarrow(iv)$:] Suppose that $\{[a_i,b_i]\}_{i\in\mathbb{N}}$ satisfies the finite intersection property. Let $\Gamma_n=:\cap_{i=1}^n[a_i,b_i]$ and observe that $\Gamma_n=[\alpha_n, \beta_n]$ where $\alpha_n=:\max_{i\le n} a_i$ and $\beta_n=:\min_{i\le n}b_i$. Then $\{\alpha_n\}_{n\in\mathbb N}$ is a bounded increasing sequence and $\{\beta_n\}_{n\in\mathbb N}$ is a bounded decreasing sequence; thus $\alpha=:\lim_{n\to\infty}\alpha_n$ and $\beta=:\lim_{n\to\infty}\beta_n$ exist by assumption. If $\beta<\alpha$, then for some $n$ we would have $\beta_n<\alpha_n$, a contradiction; hence, $\alpha\le\beta$. Therefore $\cap_{i=1}^{\infty}[a_i,b_i]=[\alpha,\beta]\neq\emptyset$.
 
  \item[$(iv)\Rightarrow(v)$:] This is the familiar \textbf{Bolzano-Weierstrass Theorem} (Bartle \& Sherbert~\cite{bartle}, p. 79). Let $\{x_n\}_{n\in\mathbb{N}}$ be a bounded sequence in $\mathbb{K}$, then there exists $a,b\in\mathbb K$ such that $\{x_n:n\in\mathbb N\}\subset [a,b]$. Let $\Gamma_1=:[a,b]$, $n_1=:1$ and divide $\Gamma_1$ into two equal subintervals $\Gamma_1'$ and $\Gamma_1''$. Let $A_1=:\{n\in\mathbb N: n> n_1, x_n\in \Gamma_1'\}$ and $B_1=:\{n\in\mathbb N: n> n_1, x_n\in \Gamma_1''\}$. If $A_1$ is infinite, then take $\Gamma_2=:\Gamma_1'$ and $n_2=:\min A_1$; otherwise, $B_1$ is infinite, thus we take $\Gamma_2=: \Gamma_1''$ and $n_2=:\min B_1$. Continuing in this manner, by the Axiom of Choice, we can produce a nested sequence $\{\Gamma_n\}$ and a subsequence $\{x_{n_k}\}$ of $\{x_n\}$ such that $x_{n_k}\in \Gamma_k$ for $k\in\mathbb N$. As well, we observe that $|\Gamma_k|=\frac{b-a}{2^{k-1}}$. By assumption $(\exists L\in\mathbb K)(\forall k\in\mathbb N)(L\in \Gamma_k)$; thus $|x_{n_k}-L|\le \frac{b-a}{2^{k-1}}$. Therefore $\{x_{n_k}\}$ converges to $L$ as $\frac{b-a}{2^{k-1}}$ converges to 0 (see remark~\ref{R: 2 Converge}).

  \item[$(v)\Rightarrow(vi)$:] Let $A\subset \mathbb{K}$ be a bounded infinite set. By the Axiom of Choice, $A$ has a denumerable subset -- that is, there exists an injection $\{x_n\}:\mathbb{N}\to A$. As $A$ is bounded, $\{x_n\}$ has a subsequence $\{x_{n_k}\}$ that converges to a point $x\in\mathbb{K}$ by assumption. Then $x$ must be a cluster point of $A$ because the sequence $\{x_{n_k}\}$ is injective, and thus not eventually constant.

  \item[$(vi)\Rightarrow(vii)$:] Let $\{x_n\}$ be a Cauchy sequence in $\mathbb{K}$. Then $\{x_n\}$ is bounded, because we can find $N\in\mathbb{N}$ such that $n\ge N$ implies that $|x_N-x_n|<1$; hence $|x_n|<1+|x_N|$ so that $\{x_n\}$ is bounded by $\max\{|x_1|,\ldots,|x_{N-1}|,|x_N|+1\}$. Thus $\mathrm{range}(\{x_n\})$ is a bounded set. If $\mathrm{range}(\{x_n\})=\{a_1,\ldots, a_k\}$ is finite, then $\{x_n\}$ is eventually constant (and thus convergent) because for sufficiently large $n,m\in\mathbb{N}$, we have $|x_n-x_m|<\min_{p\neq q}|a_p-a_q|$, where $p$ and $q$ range from $1$ to $k$. Otherwise, $\mathrm{range}(\{x_n\})$ has a cluster point $L$ by assumption. To show that $\{x_n\}\to L$, let $\epsilon\in\mathbb{K}_+$ and $N\in\mathbb{N}$ be such that $n,m\ge N$ implies that $|x_n-x_m|<\frac{\epsilon}{2}$. Observe that the set $\{n\in\mathbb{N} : |x_n-L|<\frac{\epsilon}{2} \}$ is infinite because $L$ is a cluster point (see Theorem 2.20 in Rudin~\cite{poma}), so that $A=:\{n\in\mathbb{N} : |x_n-L|<\frac{\epsilon}{2} \}\cap\{n\in \mathbb{N} : n\ge N\}$ is non-empty. Let $M=:\min A$. Then, for $n\ge N$, we have $|x_n-L|\le|x_n-x_M|+|x_M-L|<\epsilon$.
  
  \item[$(vii)\Rightarrow(viii)$:] (This proof can be found in Hewitt \& Stromberg~\cite{hewitt}, p. 44.) Let $S$ be a non-empty set bounded from above. We will construct a decreasing Cauchy sequence in $\ub( S)$ and show that the limit of the sequence is $\sup(S)$. Let $b\in\ub( S)$ and $a\in S$. Notice that there exists $M$ and $-m$ in $\mathbb{N}$ such that $m<a\le b<M$. For each $p\in\mathbb N$, we define
    \[S_p=:\left\{k\in\mathbb Z:\frac{k}{2^p}\in\ub(S)\emph{ and }k\le2^pM\right\}\]
    Clearly $2^pm$ is a lower bound of $S_p$ and $2^pM\in S_p$; hence, $S_p$ is finite which implies that $k_p=:\min S_p$ exists. We define $a_p=:\frac{k_p}{2^p}$ for all $p\in\mathbb N$. From the definition of $k_p$, it follows that $\frac{2k_p}{2^{p+1}}=\frac{k_p}{2^p}$ is an upper bound of $S$ and $\frac{2k_p-2}{2^{p+1}}=\frac{k_p-1}{2^p}$ is not. Thus, either $k_{p+1}=2k_p$ or $k_{p+1}=2k_p-1$, so that either $a_{p+1}=a_p$ or $a_{p+1}=a_p-\frac{1}{2^{p+1}}$; in either case, we have $a_{p+1}\le a_p$ and $a_p-a_{p+1}\le\frac{1}{2^{p+1}}$. Now, if $q>p\ge 1$, then
    \begin{align*}
     0\le a_p-a_q&=(a_p-a_{p+1}) +(a_{p+1} -a_{p+2})+ \cdots+(a_{q-1}-a_q)\\
     &\le \frac{1}{2^{p+1}} +\cdots +\frac{1}{2^q}
     = \frac{1}{2^{p+1}} (2-\frac{1}{2^{q-p-1}})<\frac{1}{2^p} 
    \end{align*}
    Therefore $a_p$ is a Cauchy sequence and $L=:\lim_{p\to\infty}a_p$ exists by assumption. To reach a contradiction, suppose $L\not\in\ub( S)$. Then there exists $x\in S$ such that $x>L$, and hence there exists $p\in\mathbb N$ such that $a_p-L=|a_p-L|< x-L$; thus $a_p<x$, which contradicts the fact that $a_p\in\ub( S)$. Now assume there exists $L'\in\ub( S)$ such that $L'<L$ and choose $p\in\mathbb N$ such that $\frac{1}{2^p}<L-L'$ (see remark~\ref{R: 2 Converge}). Then $a_p-\frac{1}{2^p}\ge L-\frac{1}{2^p}>L'$, thus $a_p-\frac{1}{2^p}=\frac{k_p-1}{2^p}\in\ub( S)$ which contradicts the minimality of $k_p$. Thus $L=\sup(S)$.

  \item[$(viii)\Rightarrow(ix)$:] Suppose that $\mathbb{K}$ is Dedekind complete and that $\mathbb{A}$ is a totally ordered Archimedean field extension of $\mathbb{K}$. Recall that $\mathbb{Q}$ is dense in $\mathbb{A}$ as it is Archimedean; hence, the set $\{q\in \mathbb{Q} : q<a\}$ is non-empty and bounded above in $\mathbb{K}$ for all $a\in\mathbb{A}$. Define the mapping $\sigma:\mathbb{A}\to\mathbb{K}$ by
    \[\sigma(a)=:\sup_\mathbb{K}\{q\in\mathbb{Q} : q<a\}\]
    To show that $\mathbb{A}=\mathbb{K}$ we will show that $\sigma$ is just the identity map. Note that $\sigma$ fixes $\mathbb{K}$. To reach a contradiction, suppose that $\mathbb{A}\neq\mathbb{K}$ and let $a\in\mathbb{A}\setminus\mathbb{K}$. Then $\sigma(a)\neq a$ so that either $\sigma(a)>a$ or $\sigma(a)<a$. If it is the former, then there exists $q\in\mathbb{Q}$ such that $a<q<\sigma(a)$, and if it is the latter then there exists $q\in\mathbb{Q}$ such that $\sigma(a)<q<a$ (because $\mathbb{K}$ is Archimedean by assumption so that $\mathbb{Q}$ is dense in $\mathbb{K}$). In either case we reach a contradiction. Therefore $\mathbb{K}$ has no proper Archimedean field extensions.

  \item[$(ix)\Rightarrow(i)$:] Suppose, to the contrary, there is an infinite cardinal $\kappa$ and a family $[a_i,b_i]_{i\in I}$ of fewer than $\kappa$ closed bounded intervals with the finite intersection property such that $\bigcap_{i\in I}[a_i,b_i]=\emptyset$. Let $\overline{\mathbb{K}}$ be a Dedekind complete field (see Theorem~\ref{T: exist ded}). As $\mathbb{K}$ is an Archimedean field, there is a natural embedding of $\mathbb{K}$ into $\overline{\mathbb{K}}$, so we can consider $\mathbb{K}\subseteq \overline{\mathbb{K}}$ (see Theorem~\ref{T: arch embed ded}). Because $[a_i,b_i]$ satisfies the finite intersection property, the set $A=:\{a_i :i\in I\}$ is bounded from above and non-empty so that $c=:\sup(A)$ exists in $\overline{\mathbb{K}}$, but then $a_i\le c\le b_i$ for all $i\in I$ so that $c\not\in\mathbb{K}$. Thus $\overline{\mathbb{K}}$ is a proper field extension of $\mathbb{K}$ which is Archimedean by Lemma~\ref{L: order -> arch}, a contradiction.
  \end{description}
\end{proof}

\begin{remark}
  It should be noted that the equivalence of $(ii)$ and $(vii)$ above was proved in Keisler (\cite{keisler}, pp 17-18). Also, the equivalence of $(viii)$ and $(ix)$ was proved in Banaschewski~\cite{bana} using a different method than ours that relies on the axiom of choice.
\end{remark}

\begin{remark}\label{R: NSA Unique}
  Using the Archimedean property assumed of $\mathbb K$, we can actually show that the standard and infinitesimal parts of the decomposition of a finite number are unique: let $\alpha\in\mathcal{F}(^*\mathbb{K})$ and $a,b\in\mathbb{K}$ such that $\alpha-a,\alpha-b\in\mathcal{I}(^*\mathbb{K})$, then $\alpha -a - \alpha + b=b-a\in\mathcal{I}(^*\mathbb{K})$; however, $\mathbb{K}\cap\mathcal{I}( ^*\mathbb{K})=\{0\}$ because $\mathbb K$ is Archimedean. Therefore $b=a$.
\end{remark}
\begin{remark}\label{R: 2 Converge}
  As $\mathbb K$ is Archimedean, for any $\epsilon\in\mathbb K_+$, there exists $n\in\mathbb N$ such that $\frac{1}{n}<\epsilon$. Thus, the fact that both of the sequences, $\frac{1}{n}$ and $\frac{1}{2^n}$, converge to $0$ depends only on the Archimedean property.
\end{remark}

\begin{corollary}
  If $\mathbb{K}$ is an ordered Archimedean field that is not Dedekind complete (e.g. $\mathbb{Q}$), then $^*\mathbb{K}$ contains finite numbers that are not infinitesimally close to some number of $\mathbb{K}$.
\end{corollary}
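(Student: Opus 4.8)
The plan is to read this off directly from Theorem~\ref{T: BIG ONE} by contraposition. Observe first that the assertion ``$^*\mathbb{K}$ contains a finite number that is not infinitesimally close to any number of $\mathbb{K}$'' is precisely the negation of the existence half of Leibniz completeness (item $(ii)$ of Definition~\ref{D: completeness}): it says there is some $\alpha\in\mathcal F(^*\mathbb K)$ for which no $L\in\mathbb K$ satisfies $\alpha-L\in\mathcal I(^*\mathbb K)$. So it suffices to show that a non-Dedekind-complete Archimedean field fails to be Leibniz complete.

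Next I would invoke the equivalence of $(ii)$ and $(viii)$ in Theorem~\ref{T: BIG ONE}, which applies because $\mathbb K$ is assumed totally ordered and Archimedean. Since $\mathbb K$ is not Dedekind complete, it is not Leibniz complete. The one small point to address is that Leibniz completeness as stated demands a \emph{unique} decomposition $\alpha=L+dx$; one must make sure that its failure here is a failure of existence (which is what the corollary claims) and not merely of uniqueness. But Remark~\ref{R: NSA Unique} shows that for Archimedean $\mathbb K$ the decomposition, when it exists, is automatically unique because $\mathbb K\cap\mathcal I(^*\mathbb K)=\{0\}$; hence the only way Leibniz completeness can fail is that some finite $\alpha$ has no such $L$ at all. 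That is exactly the stated conclusion.

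There is essentially no obstacle here: the corollary is a formal consequence of the main theorem plus Remark~\ref{R: NSA Unique}. The only thing worth spelling out in the write-up is the translation between the ``negation of item $(ii)$'' and the sentence as phrased in the corollary, together with the uniqueness remark so that the reader sees why ``not Leibniz complete'' yields a genuine finite element with no standard part in $\mathbb K$. The parenthetical example $\mathbb Q$ then just records that $\mathbb Q$ is Archimedean and not Dedekind complete, so the hypothesis is nonvacuous.
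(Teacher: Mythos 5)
Your proposal is correct and follows the same route as the paper, which simply derives the corollary from the equivalence of Leibniz completeness and Dedekind completeness in Theorem~\ref{T: BIG ONE}. Your extra care in separating failure of existence from failure of uniqueness (via Remark~\ref{R: NSA Unique}) is a worthwhile clarification that the paper's one-line proof leaves implicit.
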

\begin{proof}
  Follows from Theorem~\ref{T: BIG ONE} as we showed $(ii)\Leftrightarrow(vii)$.
\end{proof}

In Theorem~\ref{T: BIG ONE} we showed that the nine properties listed above were equivalent under the assumption that $\mathbb K$ is Archimedean. However, just as we showed in Lemma~\ref{L: order -> arch}, we have observed that this assumption is not necessary for \emph{some} of the properties. 

To the end of this section, we show that, along with property $(viii)$, properties $(i), (ii), (iii), (v)$ and $(vi)$ imply the Archimedean property.

\begin{lemma}
  Let $\mathbb{K}$ be an ordered field. If $\mathbb{K}$ is Bolzano complete, then $\mathbb{K}$ is Archimedean.
\end{lemma}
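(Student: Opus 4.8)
The plan is to argue by contraposition: assuming $\mathbb{K}$ is non-Archimedean, I will exhibit a bounded infinite subset of $\mathbb{K}$ having no cluster point, which contradicts Bolzano completeness. The natural candidate is the copy of $\mathbb{N}$ inside $\mathbb{K}$ given by the canonical embedding of $\mathbb{Q}$ (Remark~\ref{R: q embedding}); for brevity I write simply $\mathbb{N}\subseteq\mathbb{K}$.

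First I would record two facts about this set. It is infinite, since the embedding is injective (equivalently, an ordered field has characteristic $0$). And it is bounded: by Proposition~\ref{P: 1 arch}, if $\mathbb{K}$ is non-Archimedean then $\mathcal{L}(\mathbb{K})\neq\emptyset$, so there is some $\omega\in\mathbb{K}$ with $n\le|\omega|$ for every $n\in\mathbb{N}$; hence $|\omega|$ is an upper bound and $-|\omega|$ a lower bound for $\mathbb{N}$. Thus $\mathbb{N}$ is a bounded infinite subset of $\mathbb{K}$.

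The key observation is a separation fact: any open interval $(c,d)$ with $d-c\le 1$ contains at most one natural number, because two distinct naturals $m\neq n$ give a nonzero integer $m-n$, so $|m-n|\ge 1$, which is incompatible with $m,n\in(c,d)$. Using this I would show that no $x\in\mathbb{K}$ is a cluster point of $\mathbb{N}$. The interval $(x-\tfrac12,\,x+\tfrac12)$ meets $\mathbb{N}$ in at most one point $n_0$. If there is no such point, this interval is already a neighborhood of $x$ disjoint from $\mathbb{N}$. If $n_0=x$, the same interval contains no point of $\mathbb{N}\setminus\{x\}$. If $n_0\neq x$, then $\delta=:\min\{\tfrac12,\,|x-n_0|\}$ is a positive element of $\mathbb{K}_+$, and the neighborhood $(x-\delta,\,x+\delta)$ of $x$ is disjoint from $\mathbb{N}$. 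In every case $x$ fails the definition of a cluster point of $\mathbb{N}$.

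Therefore $\mathbb{N}$ is a bounded infinite subset of $\mathbb{K}$ with no cluster point, contradicting the assumption that $\mathbb{K}$ is Bolzano complete; hence $\mathbb{K}$ is Archimedean. I do not anticipate a genuine obstacle here — the only mildly delicate point is the case split on whether the unique nearby natural equals $x$, which is exactly where Hausdorffness of the order topology (the freedom to shrink neighborhoods to exclude a single unwanted point) is used.
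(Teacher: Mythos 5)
Your proof is correct and follows essentially the same route as the paper's: in a non-Archimedean field the copy of $\mathbb{N}$ is a bounded infinite set, and since distinct naturals are at distance at least $1$, no point can be a cluster point of it. The paper organizes this as a contradiction (a hypothetical cluster point $L$ would force two distinct naturals within distance $1$, or else a single nearest natural beyond which no others approach $L$), whereas you verify the contrapositive directly with an explicit shrinking of neighborhoods, which handles the case where the nearby natural equals the candidate point a bit more cleanly; the substance is the same.
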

\begin{proof}
  Suppose, to the contrary, that $\mathbb{K}$ is non-Archimedean. Then $\mathbb{N}\subset \mathbb{K}$ is bounded from above, and hence has a cluster point $L\in\mathbb{K}$. Then the set $A=:\{n\in \mathbb{N} : |L-n|<\frac{1}{2}, n\neq L\}$ is non-empty. If $A$ contains only one element $m\in\mathbb{N}$, then the set $\{n\in\mathbb{N}: |L-n|<|L-m|\}$ must be empty, which contradicts the fact that $L$ is a cluster point of $\mathbb{N}$. Otherwise, if $A$ contains two distinct elements $p,q\in\mathbb{N}$, then $|p-q|\le|p-L|+|q-L|<1$, but $p,q\in\mathbb{N}$, so this implies $p=q$, a contradiction.
\end{proof}

\begin{lemma}\label{L: bw -> mct -> arch}
  Let $\mathbb K$ be an ordered field. If $\mathbb{K}$ is either Bolzano-Weierstrass complete or monotone complete, then $\mathbb K$ is Archimedean.
\end{lemma}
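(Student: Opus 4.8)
The plan is to prove the contrapositive for each of the two hypotheses separately: assuming $\mathbb{K}$ is non-Archimedean, I will exhibit a bounded monotone sequence with no limit (killing monotone completeness) and a bounded sequence with no convergent subsequence (killing Bolzano–Weierstrass completeness). The monotone case is the cleaner of the two, so I would do it first. Since $\mathbb{K}$ is non-Archimedean, $\mathbb{N} \subset \mathbb{K}$ is bounded above (Proposition~\ref{P: 1 arch}), so the sequence $x_n =: n$ is a bounded increasing sequence. If it converged to some $L \in \mathbb{K}$, then for $\epsilon = 1$ there would be $N$ with $|n - L| < 1$ for all $n \geq N$; but then $|{(N+2)} - {(N+1)}| \leq |{(N+2)} - L| + |L - {(N+1)}| < 2$ forces nothing contradictory directly, so instead I argue: $L - 1 < N < L$ and $L-1 < N+1 < L$ would give $|{(N+1)} - N| < 1$, i.e. $1 < 1$, a contradiction. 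Hence $\{n\}$ is a bounded increasing sequence that does not converge, so $\mathbb{K}$ is not monotone complete.

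For the Bolzano–Weierstrass case I would reuse essentially the same example: the sequence $x_n =: n$ is bounded (again because $\mathbb{K}$ is non-Archimedean) and I claim it has no convergent subsequence. Indeed, any subsequence $\{x_{n_k}\} = \{n_k\}$ is still a strictly increasing sequence of natural numbers, hence satisfies $n_{k+1} \geq n_k + 1$; if it converged to $L$, then for $\epsilon = \tfrac12$ we would eventually have $|n_k - L| < \tfrac12$ and $|n_{k+1} - L| < \tfrac12$, giving $|n_{k+1} - n_k| < 1$, contradicting $n_{k+1} - n_k \geq 1$. So no bounded sequence witness is needed beyond the natural numbers themselves, and $\mathbb{K}$ fails to be Bolzano–Weierstrass complete.

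I expect the main (minor) obstacle to be purely cosmetic: making sure the triangle-inequality manipulations are phrased so that the contradiction "$1 < 1$" or "$n_{k+1} - n_k < 1$ while $n_{k+1} - n_k \geq 1$" comes out cleanly, using Lemma~\ref{L: 1 tri ineq} and the fact that the canonical embedding $\sigma: \mathbb{Q} \to \mathbb{K}$ of Remark~\ref{R: q embedding} is order-preserving (so that distinct naturals in $\mathbb{K}$ differ by at least $1$). One could alternatively deduce both implications at once by invoking the previous lemma (Bolzano complete $\Rightarrow$ Archimedean) together with the chain of implications, but since the statement is listed separately it is cleaner to give the two short direct arguments above rather than route through cluster points.
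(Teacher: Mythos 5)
Your argument is correct and rests on the same witness as the paper's: in a non-Archimedean field the natural numbers form a bounded increasing sequence, and neither it nor any of its subsequences can converge. The organization differs slightly. The paper first proves the general implication that Bolzano--Weierstrass completeness implies monotone completeness (valid in any ordered field), and then only has to rule out convergence of $\{n\}$ once, arguing that $L-n$ remains infinitely large for every $n$; you instead give two direct contrapositive arguments, handling subsequences of $\{n\}$ by hand via $n_{k+1}-n_k\ge 1$. Both routes are fine: the paper's yields a reusable intermediate implication, while yours avoids the (mildly fussy) verification that a convergent subsequence of an increasing sequence forces convergence of the whole sequence. One small wobble in your monotone case: from $\epsilon=1$ you assert $L-1<N<L$ and $L-1<N+1<L$, but $|N-L|<1$ only gives $N\in(L-1,L+1)$; to place $N$ and $N+1$ strictly below $L$ you must first observe that the limit of an increasing sequence is an upper bound of it (as the paper does in its Bolzano--Weierstrass step), or, more simply, take $\epsilon=\tfrac12$ as you already do in the subsequence argument, so that $|(N+1)-N|\le|(N+1)-L|+|L-N|<1$ contradicts $(N+1)-N=1$ directly.
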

\begin{proof}
  We show that Bolzano-Weiestrass completeness implies the monotone completeness, and then show that the monotone completeness implies the Archimedean property.


  Suppose that $\mathbb{K}$ is Bolzano-Weierstrass complete and let $\{x_n\}$ be a bounded monotonic sequence. Without loss of generality, we can assume that $\{x_n\}$ is increasing. Then $\{x_n\}$ has a subsequence $\{x_{n_k}\}$ that converges to some point $L\in\mathbb K$, by assumption. As $\{x_n\}$ is increasing, it follows that $\{x_{n_k}\}$ is increasing and that $L\in\ub(\{x_n\})$. Given $\epsilon\in\mathbb K_+$, we know there exists $\delta\in\mathbb K_+$ such that $(\forall k\in\mathbb N)(\delta\le k\Rightarrow|x_{n_k}-L|<\epsilon)$, but we also know that for any $m\ge n_{\delta}$, we have $x_{n_{\delta}}\le x_m\le L$. Therefore $\{x_n\}$ converges to $L$.

  Now suppose that the $\mathbb{K}$ is monotone complete. To reach a contradiction, suppose that $\mathbb K$ is a non-Archimedean. From this, it follows that the sequence $\{n\}$ is bounded and, thus, converges to some point $L$ by assumption. It should be clear that $L\in\mathcal L(\mathbb K)$. As well, we have $L-n\in\mathcal L(\mathbb K)$ for any $n\in\mathbb N$ (because $L-n\not\in\mathcal L(\mathbb K)$ implies $L<m+n\in\mathbb N$ for some $m\in\mathbb N$). However, this last condition contradicts $\{n\}$ converging to $L$, as the difference between the sequence and the point $L$ will always be infinitely large. Therefore $\mathbb K$ must be Archimedean.
\end{proof}

\begin{lemma}\label{L: gcantor -> arch}
  Let $\mathbb K$ be an ordered field. If $\mathbb{K}$ is Cantor $\kappa$-complete for $\kappa=\card(\mathbb{K})^+$, then $\mathbb K$ is Archimedean. Consequently, if $\mathbb{K}$ is Cantor $\kappa$-complete for every cardinal $\kappa$, then $\mathbb{K}$ is Archimedean.
\end{lemma}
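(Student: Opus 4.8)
The plan is to prove this by contradiction: assume $\mathbb{K}$ is \emph{not} Archimedean, and manufacture a family of at most $\card(\mathbb{K})$ closed bounded intervals that has the finite intersection property yet empty intersection. Since $\card(\mathbb{K}) < \card(\mathbb{K})^+ = \kappa$, this will contradict Cantor $\kappa$-completeness.

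First I would record the needed facts about infinitely large elements. By Proposition~\ref{P: 1 arch}, being non-Archimedean means $\mathcal{L}(\mathbb{K}) \ne \emptyset$, so (passing to absolute values) the set
$$\Lambda := \{\lambda \in \mathbb{K} : n < \lambda \text{ for all } n \in \mathbb{N}\}$$
is non-empty; moreover $\Lambda$ is exactly $\ub(\mathbb{N})$, since any upper bound of $\mathbb{N}$ dominates $n+1$ and hence strictly dominates $n$, for every $n$. The key point is that $\Lambda$ has no least element: if $\lambda \in \Lambda$ then $\lambda - 1 \in \Lambda$ as well, for otherwise $\lambda - 1 < n$ for some $n \in \mathbb{N}$, whence $\lambda < n+1 \in \mathbb{N}$, contradicting $\lambda \in \ub(\mathbb{N})$. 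Then, fixing any $\lambda_0 \in \Lambda$, I would take the family
$$\mathcal{C} := \{\, [1,\lambda] : \lambda \in \Lambda \,\} \cup \{\, [n, \lambda_0] : n \in \mathbb{N} \,\}.$$
Every listed interval is a non-empty bounded closed interval (because $1 < \lambda$ and $n < \lambda_0$ for all $n \in \mathbb{N}$ and $\lambda \in \Lambda$), and $\card(\mathcal{C}) \le \card(\mathbb{K}) < \kappa$.

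It then remains to check two things. For the finite intersection property: a non-empty finite subfamily of $\mathcal{C}$ intersects in $[N, \mu]$, where $N$ is the maximum of the lower endpoints — a natural number — and $\mu$ is the minimum of the upper endpoints — an element of $\Lambda$; since $\mu \in \Lambda$ we get $N < \mu$, so the intersection is non-empty. For emptiness of $\bigcap \mathcal{C}$: if $x \in \bigcap \mathcal{C}$, then $x \ge n$ for every $n \in \mathbb{N}$ (using the intervals $[n,\lambda_0]$), so $x \in \Lambda$; but then $x - 1 \in \Lambda$, hence $[1, x-1] \in \mathcal{C}$ and $x \in [1, x-1]$, i.e. $x \le x-1$, which is absurd. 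This contradiction establishes that $\mathbb{K}$ is Archimedean, and the final assertion follows by specializing to $\kappa = \card(\mathbb{K})^+$. I expect the only genuine subtlety is the design of $\mathcal{C}$: the naive countable family $\{[n,\lambda_0]\}_{n \in \mathbb{N}}$ has non-empty intersection (it contains $\lambda_0$), so one is forced to index intervals over the whole, possibly large, set $\Lambda$ in order to trim off every potential common point — which is precisely where the no-least-element property of $\ub(\mathbb{N})$ does the work.
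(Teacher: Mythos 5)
Your proof is correct, and it takes a mildly different route from the paper's, although both arguments are built around essentially the same family of intervals. The paper argues in the positive direction: for an arbitrary non-empty $S$ bounded from above it forms $\Gamma=\{[a,b] : a\in S,\ b\in\ub(S)\}$, notes that $\Gamma$ has the F.I.P. and cardinality at most $\card(\mathbb K)<\kappa$, and shows that any point of $\bigcap\Gamma$ is $\sup(S)$; thus Cantor $\card(\mathbb K)^+$-completeness yields full Dedekind completeness, after which the Archimedean property follows from Lemma~\ref{L: order -> arch}. You instead argue contrapositively and specialize to $S=\mathbb N$: assuming $\Lambda=\ub(\mathbb N)\neq\emptyset$, you exhibit an explicit family with the F.I.P. and empty intersection, using the fact that $\Lambda$ has no least element (via $\lambda\in\Lambda\Rightarrow\lambda-1\in\Lambda$) to trim away every candidate common point. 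Your version is self-contained and makes visible exactly where non-Archimedean behaviour violates Cantor completeness; the paper's version buys the stronger intermediate fact that such a field is already Dedekind complete. The details you flag all check out: the cardinality bound $\card(\mathcal C)\le\card(\Lambda)+\aleph_0\le\card(\mathbb K)<\kappa$ (using that $\mathbb K\supseteq\mathbb Q$ is infinite), the F.I.P. computation $[N,\mu]$ with $N\in\mathbb N<\mu\in\Lambda$, and your closing observation that the countable subfamily $\{[n,\lambda_0]\}$ alone would not suffice, which is precisely why the intervals $[1,\lambda]$ must range over all of $\Lambda$.
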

\begin{proof}
Let $S\subset\mathbb K$ be bounded from above and $\Gamma=:\{[a,b] : a\in S, b\in\ub(S)\}$. Observe that $\Gamma$ satisfies the finite intersection property and that $\card(\Gamma)\le\card(\mathbb K)\times\card(\mathbb K)=\card(\mathbb K)$. Then there exists $\sigma\in\bigcap_{\gamma\in\Gamma} \gamma$ by assumption. Clearly $(\forall a\in S)(a\le\sigma)$, thus $\sigma\in\ub(S)$. But we also have $(\forall b\in\ub(S))(\sigma\le b)$. Therefore $\sigma=\sup(S)$.
\end{proof}

\begin{lemma}\label{L: nsa -> arch}
  Let $\mathbb K$ be a totally ordered field. If $\mathcal F(^*\mathbb K)=\mathbb K\oplus\mathcal I(^*\mathbb K)$, in the sense that every finite number can be decomposed uniquely into the sum of an element from $\mathbb K$ and an element from $\mathcal I(^*\mathbb K)$, then $\mathbb K$ is Archimedean.
\end{lemma}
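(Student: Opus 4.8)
The plan is to prove the contrapositive: assuming $\mathbb{K}$ is non-Archimedean, I will exhibit a single finite element of $^*\mathbb{K}$ that admits two genuinely different decompositions of the form (element of $\mathbb{K}$) $+$ (infinitesimal), contradicting the hypothesized uniqueness.

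First, since $\mathbb{K}$ is non-Archimedean, Proposition~\ref{P: 1 arch} (the equivalence $(i)\Leftrightarrow(iii)$) produces a non-zero $dx\in\mathcal I(\mathbb{K})$; that is, $dx\in\mathbb{K}$, $dx\neq 0$, and $|dx|<\frac1n$ for every $n\in\mathbb{N}$. Next I would check that $dx$ remains infinitesimal when viewed inside $^*\mathbb{K}$: because the canonical embedding $\mathbb{K}\hookrightarrow{}^*\mathbb{K}$ is an ordered field embedding that carries the copy of $\mathbb{N}$ sitting in $\mathbb{K}$ onto the standard naturals of $^*\mathbb{K}$, the inequalities $|dx|<\frac1n$ persist in $^*\mathbb{K}$, so $dx\in\mathcal I(^*\mathbb{K})$. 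In particular $dx\in\mathcal F(^*\mathbb{K})$ as well, since $\mathcal I(^*\mathbb{K})\subset\mathcal F(^*\mathbb{K})$.

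Now consider the finite element $\alpha=dx\in\mathcal F(^*\mathbb{K})$. On one hand $\alpha=dx+0$ with $dx\in\mathbb{K}$ and $0\in\mathcal I(^*\mathbb{K})$; on the other hand $\alpha=0+dx$ with $0\in\mathbb{K}$ and $dx\in\mathcal I(^*\mathbb{K})$. Since $dx\neq 0$, these two decompositions have different $\mathbb{K}$-parts (and different infinitesimal parts), which contradicts the assumed uniqueness in $\mathcal F(^*\mathbb{K})=\mathbb{K}\oplus\mathcal I(^*\mathbb{K})$. Hence $\mathbb{K}$ must be Archimedean.

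There is essentially no hard step here; the only point that requires a moment's care is the observation that a non-zero infinitesimal of $\mathbb{K}$ is still infinitesimal — and in particular still non-zero — inside $^*\mathbb{K}$. This is exactly where the structure of $^*\mathbb{K}$ as an ordered-field extension of $\mathbb{K}$ enters, and it is precisely the failure of $\mathbb{K}\cap\mathcal I(^*\mathbb{K})=\{0\}$ in the non-Archimedean case that drives the contradiction, mirroring the uniqueness argument of Remark~\ref{R: NSA Unique}.
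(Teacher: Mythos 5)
Your proof is correct and follows essentially the same route as the paper: both arguments extract a non-zero $dx\in\mathcal I(\mathbb K)$ from Proposition~\ref{P: 1 arch}, observe that $dx$ remains infinitesimal in $^*\mathbb K$ because the ordering extends, and use it to produce two distinct decompositions, violating uniqueness. The only cosmetic difference is that you exhibit the failure on the specific element $\alpha=dx$ (writing $dx+0=0+dx$), whereas the paper perturbs the decomposition of an arbitrary finite $\alpha$ by $dx$; the underlying idea is identical.
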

\begin{proof}
  Suppose that $\mathbb K$ is non-Archimedean. Then there exists a $dx\in\mathcal I(\mathbb K)$ such that $dx\neq0$ by Proposition~\ref{P: 1 arch}. Now take $\alpha\in\mathcal F(^*\mathbb K)$ arbitrarily. By assumption there exists unique $k\in\mathbb K$ and $d\alpha\in\mathcal I(^*\mathbb K)$ such that $\alpha=k+d\alpha$. However, we know that $dx\in\mathcal I(^*\mathbb K)$ as well because $\mathbb{K}\subset{^*\mathbb{K}}$ and the ordering in $^*\mathbb{K}$ extends that of $\mathbb{K}$. Thus $(k+dx)+(d\alpha-dx)=k+d\alpha=\alpha$ where $k+dx\in\mathbb K$ and $d\alpha-dx\in\mathcal I(^*\mathbb K)$. This contradicts the uniqueness of $k$ and $d\alpha$. Therefore $\mathbb K$ is Archimedean.
\end{proof}

\section{Axioms of the Reals}\label{S: axioms}
In modern mathematics, the set of real numbers is most commonly defined in an axiomatic fashion as a totally ordered, Dedekind complete field; however, the result of Theorem~\ref{T: BIG ONE} presents multiple options for an axiomatic definition of $\mathbb{R}$. What follows are several different axiomatic definitions of the set of real numbers: the first two are based on Cantor completeness, the next three are sequential approaches, the sixth and seventh are based on properties of subsets, the eigth is based on non-standard analysis, and the last is based on an algebraic characterization.

\subsubsection*{Axioms of the Reals based on Cantor Completeness}
\begin{enumerate}[label=\textbf{C\arabic*.},ref=\textbf{C\arabic*}]
\item \label{D: R ckc}
  $\mathbb{R}$ is the set that satisfies the following axioms
  \begin{axiom-enum}
    \item $\mathbb R$ is a totally ordered field.
    \item $\mathbb R$ is Cantor $\kappa$-complete for any infinite cardinal $\kappa$ (Theorem~\ref{T: BIG ONE} number $(i)$).
  \end{axiom-enum}
\item\label{D: R cc}
  $\mathbb{R}$ is the set that satisfies the following axioms
  \begin{axiom-enum}
    \item $\mathbb R$ is a totally ordered Archimedean field.
    \item $\mathbb R$ is Cantor complete (Theorem~\ref{T: BIG ONE} number $(iv)$).
  \end{axiom-enum}
\end{enumerate}

\subsubsection*{Sequential Axioms of the Reals}

\begin{enumerate}[label=\textbf{S\arabic*.},ref=\textbf{S\arabic*}]
\item \label{D: R bw}
  $\mathbb{R}$ is the set that satisfies the following axioms
  \begin{axiom-enum}
    \item $\mathbb R$ is a totally ordered field.
    \item $\mathbb R$ is Bolzano-Weierstrass complete (Theorem~\ref{T: BIG ONE} $(v)$).
  \end{axiom-enum}
\item\label{D: R mct}
  $\mathbb{R}$ is the set that satisfies the following axioms
  \begin{axiom-enum}
    \item $\mathbb R$ is a totally ordered field.
    \item $\mathbb R$ monotone complete (Theorem~\ref{T: BIG ONE} $(iii)$).
  \end{axiom-enum}
\item\label{D: R seq}
  $\mathbb{R}$ is the set that satisfies the following axioms
  \begin{axiom-enum}
    \item $\mathbb R$ is a totally ordered Archimedean field.
    \item $\mathbb R$ is sequentially complete (Theorem~\ref{T: BIG ONE} $(vii)$).
  \end{axiom-enum}
\end{enumerate}

\subsubsection*{Set-Based Axioms of the Reals}
\begin{enumerate}[label=\textbf{B\arabic*.},ref=\textbf{B\arabic*}]
\item \label{D: R bol}
  $\mathbb{R}$ is the set that satisfies the following axioms
  \begin{axiom-enum}
    \item $\mathbb{R}$ is a totally ordered field.
    \item $\mathbb{R}$ is Bolzano complete (Theorem~\ref{T: BIG ONE} $(vi)$).
  \end{axiom-enum}
\item \label{D: R ded}
  $\mathbb{R}$ is the set that satisfies the following axioms
  \begin{axiom-enum}
    \item $\mathbb{R}$ is a totally ordered field.
    \item $\mathbb{R}$ is Dedekind complete (Theorem~\ref{T: BIG ONE} $(viii)$).
  \end{axiom-enum}
\end{enumerate}

\subsubsection*{Axioms of the Reals from Non-standard Analysis}

\begin{enumerate}[label=\textbf{N\arabic*.},ref=\textbf{N\arabic*}]
\item \label{D: R nsa}
  $\mathbb{R}$ is the set that satisfies the following axioms
  \begin{axiom-enum}
    \item $\mathbb{R}$ is a totally ordered field.
    \item $\mathbb{R}$ is Leibniz complete (Theorem~\ref{T: BIG ONE} $(ii)$ and Remark~\ref{R: NSA Unique}).
  \end{axiom-enum}
\end{enumerate}

\subsubsection*{Algebraic Axioms of the Reals}
\begin{enumerate}[label=\textbf{A\arabic*.},ref=\textbf{A\arabic*}]
\item\label{D: R alg}
  $\mathbb{R}$ is the set that satisfies the following axioms
  \begin{axiom-enum}
  \item $\mathbb{R}$ is a totally ordered Archimedean field.
  \item $\mathbb{R}$ is Hilbert complete (Theorem~\ref{T: BIG ONE} $(ix)$).
  \end{axiom-enum}
\end{enumerate}

Notice that definitions \ref{D: R alg}, \ref{D: R cc} and \ref{D: R seq} explicitly assert that $\mathbb{R}$ is an \emph{Archimedean} field, while the others do not. From the preceding section, we know that properties of the remaining definitions above (definitions \ref{D: R ckc}, \ref{D: R bw}, \ref{D: R mct}, \ref{D: R nsa}, \ref{D: R bol}, \ref{D: R ded}) are sufficient to establish the Archimedean property, but it should be clear that every non-Archimedean field is Hilbert complete, and as we will see in the next section, sequential completeness and Cantor completeness can hold in non-Archimedean fields as well.

\section{Non-standard Construction of the Real Numbers}\label{S: construct}
What we present in this section is a very quick presentation of one way to construct a Dedekind complete field using the techniques of non-standard analysis; however, this construction can not be considered a ``proof of existence of Dedekind complete fields'' because we rely on the assumption that every Archimedean field has cardinality at most $\mathfrak c$ where $\mathfrak c$ is defined in Corollary~\ref{C: ded order iso}. An alternative approach using non-standard analysis, that does not rely on the assumption that Dedekind complete fields exist, can be found in Davis~\cite{davis}, but his method is somewhat outdated as it uses the Concurrence Theorem rather than the more current concept of saturation that we employ. It is our opinion that our method can be modified to remove the assumption mentioned above, while still being simpler than the approach used in Davis.

As well, we note that a slight refinement of the construction given here can be found in Hall \& Todorov~\cite{HallTodDedekind11}.

To begin, let $^*\mathbb{Q}$ be a $\mathfrak c^+$-saturated non-standard extension of $\mathbb{Q}$ and recall that $^*\mathbb{Q}$ is a totally ordered non-Archimedean field extension of $\mathbb{Q}$.

Now define $\overline{^*\mathbb{Q}}=:\mathcal{F}(^*\mathbb{Q})/\mathcal{I}(^*\mathbb{Q})$ (see \S~\ref{S: inf} for definition of $\mathcal{F}$ and $\mathcal{I}$). Notice that $\overline{^*\mathbb{Q}}$ is a totally ordered Archimedean field since $\mathcal{F}(^*\mathbb{Q})$ is a totally ordered Archimedean ring and $\mathcal{I}(^*\mathbb{Q})$ is a maximal convex ideal in $\mathcal{F}(^*\mathbb{Q})$ (see Lemma~\ref{L: finite arch ring}). Let $q:\mathcal{F}(^*\mathbb{Q})\to\overline{^*\mathbb{Q}}$ be the cannonical homomorphism.

\begin{theorem}
  $\overline{^*\mathbb{Q}}$ is Dedekind complete.
\end{theorem}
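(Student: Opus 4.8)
The plan is to reduce to Theorem~\ref{T: BIG ONE}. By Lemma~\ref{L: finite arch ring}, $\overline{^*\mathbb{Q}}=\mathcal F(^*\mathbb Q)/\mathcal I(^*\mathbb Q)$ is a totally ordered Archimedean field, so it suffices to verify any one of the equivalent completeness properties listed there; I would check that $\overline{^*\mathbb{Q}}$ is Cantor complete (Cantor $\aleph_1$-complete) and then quote $(iv)\Rightarrow(viii)$ to conclude Dedekind completeness. In fact the same argument gives Cantor $\kappa$-completeness for every $\kappa\le\mathfrak c^+$, and since $\overline{^*\mathbb{Q}}$ has at most $\mathfrak c$ distinct closed bounded intervals it then yields Cantor $\kappa$-completeness for all $\kappa$, i.e.\ condition $(i)$; but $\aleph_1$-completeness is all that is needed.

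So let $\{[\bar a_n,\bar b_n]\}_{n\in\mathbb N}$ be a nested sequence of bounded closed intervals in $\overline{^*\mathbb{Q}}$, let $q:\mathcal F(^*\mathbb Q)\to\overline{^*\mathbb{Q}}$ be the canonical homomorphism, and pick representatives $a_n,b_n\in\mathcal F(^*\mathbb Q)$ with $q(a_n)=\bar a_n$ and $q(b_n)=\bar b_n$. The one algebraic fact I would isolate first is that, for finite $x,y$, one has $q(x)\le q(y)$ exactly when $x<y+\tfrac1m$ for every $m\in\mathbb N$ (since $q(y-x)\ge 0$ iff $y-x$ is non-negative or infinitesimal). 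Hence producing a point of $\bigcap_n[\bar a_n,\bar b_n]$ reduces to producing a single $c\in{^*\mathbb Q}$ with
\[
a_n-\tfrac1m<c<b_n+\tfrac1m\qquad\text{for all }n,m\in\mathbb N;
\]
any such $c$ is automatically finite (take $n=m=1$), so $q(c)$ then lies in every $[\bar a_n,\bar b_n]$. Each of these countably many conditions cuts out an internal subset of $^*\mathbb Q$, namely a $^*$-interval with endpoints in $^*\mathbb Q$, so I would invoke the $\mathfrak c^+$-saturation of $^*\mathbb Q$ once the finite intersection property is established.

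For the finite intersection property, given finitely many of these conditions with indices bounded by $N$ and $M$, I claim $c=a_N$ works: nestedness gives $\bar a_n\le\bar a_N\le\bar b_N\le\bar b_n$ for $n\le N$, and feeding each of these three inequalities through the displayed characterization yields $a_n-\tfrac1m<a_N$ and $a_N<b_n+\tfrac1m$ for all $m$. This is precisely the step where the $\pm\tfrac1m$ enlargement of the intervals is essential: the naive internal sets $\{x\in{^*\mathbb Q}:a_n\le x\le b_n\}$ may well be empty, because $\bar a_n\le\bar b_n$ only forces $a_n\le b_n+\varepsilon$ for some non-negative infinitesimal $\varepsilon$ in $^*\mathbb Q$. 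Saturation then delivers the required $c$, giving $q(c)\in\bigcap_n[\bar a_n,\bar b_n]$, and Theorem~\ref{T: BIG ONE} finishes the argument. I expect the only genuine obstacle to be tracking the passage between the order of $\overline{^*\mathbb{Q}}$ and that of $^*\mathbb Q$ carefully enough to guarantee that the internal sets handed to the saturation principle are nonempty — which is exactly what forces the $\tfrac1m$-slack in the formulation above.
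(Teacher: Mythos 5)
Your proof is correct, but it takes a genuinely different route from the paper's. The paper proves Dedekind completeness directly: given a nonempty bounded-above set $A\subset\overline{^*\mathbb{Q}}$ with $A\cap\ub(A)=\emptyset$, it chooses representatives $f(a)\in a$ and $g(b)\in b$ for $a\in A$ and $b\in\ub(A)$, notes that the family $\{[f(a),g(b)]\}_{a\in A,\,b\in\ub(A)}$ of intervals in $^*\mathbb{Q}$ has the finite intersection property (because $a<b$ strictly, so $f(a)<g(b)$ outright) and cardinality at most $\mathfrak c$ by Corollary~\ref{C: arch card}, and then applies $\mathfrak c^+$-saturation to get $\gamma$ with $\sup A=q(\gamma)$. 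You instead verify only Cantor ($\aleph_1$-)completeness of the quotient by lifting a nested sequence of intervals and inserting the $\tfrac1m$-slack so that the lifted internal intervals are provably nonempty and have the F.I.P., then invoke Theorem~\ref{T: BIG ONE}, $(iv)\Rightarrow(viii)$, the quotient being Archimedean by Lemma~\ref{L: finite arch ring}; your characterization of $q(x)\le q(y)$ and the choice $c=a_N$ in the F.I.P.\ check are both correct, and the slack genuinely is needed for the reason you give. Your route buys two things. First, it uses only countable saturation of $^*\mathbb{Q}$ rather than $\mathfrak c^+$-saturation. Second, and more significantly, your main line never invokes the bound $\mathrm{card}(\overline{^*\mathbb{Q}})\le\mathfrak c$ of Corollary~\ref{C: arch card} --- which is precisely the ingredient the paper flags as preventing its construction from counting as a proof of existence of Dedekind complete fields, since that corollary rests on Theorem~\ref{T: exist ded}. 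The implications $(iv)\Rightarrow(v)\Rightarrow(vi)\Rightarrow(vii)\Rightarrow(viii)$ of Theorem~\ref{T: BIG ONE} that you need do not pass through Theorem~\ref{T: exist ded}, so no circularity enters. (Your parenthetical upgrade to Cantor $\kappa$-completeness for all $\kappa$ does reuse the cardinality bound, but, as you note, it is dispensable.)
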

\begin{proof}
  Let $A\subset\overline{^*\mathbb{Q}}$ be non-empty and bounded from above. If either $A$ is finite or $A\cap\ub(A)\neq\emptyset$ then we are done as $\max(A)$ exists; thus, suppose $A$ is infinite and $A\cap\ub(A)=\emptyset$. Let $B=:\ub(A)$. Then by the Axiom of Choice, there exist functions $f:A\to {^*\mathbb{Q}}$ and $g:B\to{^*\mathbb{Q}}$ such that, for all $a\in A$, $f(a)\in a$ and for all $b\in B$, $g(b)\in b$. Observe that $f(a)< g(b)$ for all $a\in A$ and $b\in B$ because $a<b$ by assumption. Consequently, the family $\{[f(a),g(b)]\}_{a\in A,b\in B}$ has the finite intersection property, and because $\overline{^*\mathbb{Q}}$ is Archimedean, we have that $\card(A\times B)=\card(\overline{^*\mathbb{Q}})\le\mathfrak c$ (see Corollary~\ref{C: arch card}). Thus, by the Saturation Principle and Theorem~\ref{T: non-arch completeness}, there exists a $\gamma\in\bigcap_{a\in A,b\in B}[f(a),g(b)]$, at which point we clearly have $\sup A=q(\gamma)$.
\end{proof}

Thus $\overline{^*\mathbb{Q}}$ is a Dedekind complete field, and from Corollary~\ref{C: ded order iso} we know that $\overline{^*\mathbb{Q}}$ is order field isomorphic to the field of Dedekind cuts $\mathbb{R}$ under the mapping from $q(\alpha)\mapsto C_\alpha$ where $C_\alpha=:\{p\in\mathbb{Q} : p<\alpha\}$ for $\alpha\in\mathcal{F}({^*\mathbb{Q}})$. What's interesting about this observation is that the sets $C_\alpha$ provide explicit form to represent a Dedekind cut using only $\mathbb{Q}$ and its non-standard extension $^*\mathbb{Q}$.

\section{Completeness of a Non-Archimedean Field}\label{S: non-arch completeness}



In this section we present some basic results concerning completeness of non-Archimedean fields. Most of the results in this section are due to H. Vernaeve~\cite{vernaeve}.


As before, $\kappa^+$ stands for the successor of $\kappa$ and $\aleph_1=\aleph_0^+$.

\begin{theorem}
  Let $\mathbb K$ be an ordered field. If $\mathbb K$ is non-Archimedean and Cantor $\kappa$-complete (see Definition~\ref{D: completeness}), then $\kappa\le\card(\mathbb K)$.
\end{theorem}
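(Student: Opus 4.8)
The plan is to argue by contradiction: assume $\mathbb{K}$ is non-Archimedean and Cantor $\kappa$-complete but that $\kappa>\card(\mathbb{K})$, and then produce a family of at most $\card(\mathbb{K})$ (hence fewer than $\kappa$) closed bounded intervals with the finite intersection property whose intersection is nonetheless empty. The only structural input needed is that a non-Archimedean $\mathbb{K}$ has a \emph{positive} infinitely large element: by Proposition~\ref{P: 1 arch}, $\mathcal{L}(\mathbb{K})\neq\emptyset$, and since $0\notin\mathcal{L}(\mathbb{K})$ and $|-x|=|x|$, replacing an element by its negative if necessary yields one that is positive; of course $\mathbb{K}$ also has positive finite elements (e.g. every natural number, via the embedding of Remark~\ref{R: q embedding}).

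The family I would use is
$$\mathcal{A}=\bigl\{\,[x,M]\ :\ x\in\mathcal{F}(\mathbb{K}),\ x>0,\ M\in\mathcal{L}(\mathbb{K}),\ M>0\,\bigr\}.$$
By the remarks above $\mathcal{A}\neq\emptyset$, and $\card(\mathcal{A})\le\card(\mathbb{K}\times\mathbb{K})=\card(\mathbb{K})<\kappa$ since $\mathbb{K}$ is infinite. For the finite intersection property, given $[x_1,M_1],\dots,[x_k,M_k]\in\mathcal{A}$, the element $x^{*}=\max_i x_i$ is still positive and finite and $M^{*}=\min_j M_j$ is still positive and infinitely large, so $x^{*}<M^{*}$ and $\bigcap_i[x_i,M_i]=[x^{*},M^{*}]\neq\emptyset$. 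For emptiness of $\bigcap\mathcal{A}$: any $y$ lying in it would satisfy $y\ge x$ for every positive finite $x$ — in particular $y\ge n$ for all $n\in\mathbb{N}$, so $y$ is positive and infinitely large — and also $y\le M$ for every positive infinitely large $M$; but then taking $M=y/2$, which is again positive and infinitely large, forces $y\le y/2$, i.e. $y\le 0$, a contradiction. Thus $\mathcal{A}$ contradicts Cantor $\kappa$-completeness, and therefore $\kappa\le\card(\mathbb{K})$.

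The one delicate point — and the only place a naive attempt fails — is the design of $\mathcal{A}$: one must straddle the ``Dedekind gap'' between $\mathcal{F}(\mathbb{K})$ and the positive infinitely large elements from \emph{both} sides. Letting the left endpoints sweep out all of $\mathcal{F}(\mathbb{K})_{+}$ is exactly what forces a hypothetical common point $y$ to be infinitely large; letting the right endpoints sweep out all positive infinitely large elements — exploiting that this set has no least member, since $M\mapsto M/2$ strictly decreases within it — is what then rules $y$ out entirely. A one-sided family, or one with any endpoint held fixed, would retain a nonempty intersection (for instance some $M/2$), so both sweeps are essential; once $\mathcal{A}$ is chosen this way, the remaining verifications are routine order arithmetic in $\mathbb{K}$, and the cardinality bookkeeping uses only $\card(\mathbb{K}\times\mathbb{K})=\card(\mathbb{K})$.
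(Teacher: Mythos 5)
Your proof is correct. It differs from the paper's in that the paper simply observes that $\kappa>\card(\mathbb{K})$ would make $\mathbb{K}$ Cantor $\card(\mathbb{K})^{+}$-complete and then invokes Lemma~\ref{L: gcantor -> arch} (Cantor $\card(\mathbb{K})^{+}$-completeness forces every bounded-above set to have a supremum, hence Dedekind completeness, hence the Archimedean property by Lemma~\ref{L: order -> arch}), contradicting the non-Archimedean hypothesis in two lines. You instead exhibit a concrete family of at most $\card(\mathbb{K})$ closed bounded intervals with the finite intersection property and empty intersection, straddling the gap between $\mathcal{F}(\mathbb{K})_{+}$ and the positive elements of $\mathcal{L}(\mathbb{K})$. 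In effect you are instantiating the paper's general lemma at the one set that witnesses non-Archimedean-ness, and doing so self-containedly: the key observations --- that $\max$ of finitely many positive finite elements is finite, $\min$ of finitely many positive infinite elements is infinite, a common point $y$ would have to be infinitely large yet lie below $y/2$, and the cardinality bound $\card(\mathbb{K}\times\mathbb{K})=\card(\mathbb{K})<\kappa$ --- are all verified correctly. What the paper's route buys is brevity and reuse of machinery already proved; what yours buys is an explicit witness to the failure of Cantor $\card(\mathbb{K})^{+}$-completeness and independence from the Dedekind-completeness detour, including your accurate remark that the set of positive infinitely large elements has no least member, which is exactly why the intersection is empty.
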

\begin{proof}
  Suppose, to the contrary, that $\kappa>\card(\mathbb K)$, then $\mathbb K$ is Cantor $\card(\mathbb K)^+$-complete. Then it follows from Lemma~\ref{L: gcantor -> arch} that $\mathbb{K}$ is Archimedean, a contradiction. 
\end{proof}

It should be noted, that in non-standard analysis there is a generalization of the following definition to what are known as \emph{internal sets}, for more information we refer to Lindstr\o m~\cite{lindstrom}.

\begin{definition}[Algebraic Saturation]\label{D: algebraic saturation}
  Let $\kappa$ be an infinite cardinal. A totally ordered field $\mathbb K$ is \emph{algebraically $\kappa$-saturated} if every family $\{(a_\gamma,b_\gamma)\}_{\gamma\in \Gamma}$ of fewer than $\kappa$ open intervals in $\mathbb K$ with the F.I.P. (finite intersection property) has a non-empty intersection, $\bigcap_{\gamma\in \Gamma} (a_\gamma,b_\gamma)\neq \emptyset$. If $\mathbb{K}$ is algebraically $\aleph_1$-saturated -- i.e. every sequence of open intervals with the F.I.P. has a non-empty intersection -- then we simply say that $\mathbb{K}$ is \emph{algebraically saturated}. As well, we say that $\mathbb K$ is \emph{algebraically $\kappa$-saturated at infinity} if every collection of fewer than $\kappa$ elements from $\mathbb K$ is bounded. Also, $\mathbb{K}$ is \emph{algebraically saturated at infinity} if $\mathbb K$ is algebraically $\aleph_1$-saturated at infinity -- i.e. every countable subset of $\mathbb{K}$ is bounded.
\end{definition}

Notice that every totally ordered field is algebraically $\aleph_0$-saturated and $\aleph_0$-saturated at infinity (in a trivial way).

\begin{theorem}\label{T: sat <-> cantor}
  Let $\mathbb K$ be an ordered field and $\kappa$ be an uncountable cardinal. Then the following are equivalent:
  \begin{thm-enumerate}
    \item $\mathbb K$ is algebraically $\kappa$-saturated 
    \item $\mathbb K$ is Cantor $\kappa$-complete and algebraically $\kappa$-saturated at infinity.
  \end{thm-enumerate}
\end{theorem}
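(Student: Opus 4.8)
The plan is to prove the two implications by translating freely between the closed‑interval condition (Cantor $\kappa$-completeness, Definition~\ref{D: completeness}), the open‑interval condition (algebraic $\kappa$-saturation, Definition~\ref{D: algebraic saturation}), and the boundedness condition (saturation at infinity). The direction $(i)\Rightarrow(ii)$ is the routine one; the reverse direction carries the real content, and the crux there is that a point lying in every \emph{closed} interval $[a_\gamma,b_\gamma]$ can fail to lie in every \emph{open} interval $(a_\gamma,b_\gamma)$ because it is pinned at an attained endpoint — exactly the pathology that saturation at infinity is designed to eliminate.

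For $(i)\Rightarrow(ii)$ I would argue twice. To get Cantor $\kappa$-completeness, take $\{[a_\gamma,b_\gamma]\}_{\gamma\in\Gamma}$ with the F.I.P. and $\card(\Gamma)<\kappa$, and pass to the family indexed by the finite nonempty subsets $F\subseteq\Gamma$ of the intervals $I_F=\bigcap_{\gamma\in F}[a_\gamma,b_\gamma]=[\alpha_F,\beta_F]$; this family is closed under finite intersections, has the same total intersection, has at most $\card(\Gamma)<\kappa$ members, and all its members are nonempty by the F.I.P. If some $I_{F_0}$ is a singleton $\{c\}$, then $c$ lies in every $I_F$ because $I_{F_0}\cap I_F=I_{F_0\cup F}\neq\emptyset$, and we are done; otherwise every $I_F$ has $\alpha_F<\beta_F$, so the open intervals $(\alpha_F,\beta_F)$ are nonempty and inherit the F.I.P. (for finitely many $F_1,\dots,F_n$, the nonempty interval $(\alpha_{F_0},\beta_{F_0})$ with $F_0=\bigcup_i F_i$ sits inside each $(\alpha_{F_i},\beta_{F_i})$), whence algebraic $\kappa$-saturation gives a point in $\bigcap_F(\alpha_F,\beta_F)\subseteq\bigcap_{\gamma}[a_\gamma,b_\gamma]$. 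To get saturation at infinity, given fewer than $\kappa$ elements, discard the zero ones (they do not affect boundedness) and apply algebraic $\kappa$-saturation to the open intervals $(0,1/|c_\gamma|)$, which have the F.I.P.; a point $x$ in their intersection satisfies $0<x<1/|c_\gamma|$ for all $\gamma$, so $1/x$ bounds the collection.

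For $(ii)\Rightarrow(i)$, let $\{(a_\gamma,b_\gamma)\}_{\gamma\in\Gamma}$ have the F.I.P. with $\card(\Gamma)<\kappa$; applying the F.I.P. to pairs gives $a_\gamma<b_\delta$ for all $\gamma,\delta$. Then $\{[a_\gamma,b_\delta]\}_{(\gamma,\delta)\in\Gamma\times\Gamma}$ has at most $\card(\Gamma)^2$ members (again $<\kappa$) and the F.I.P. — a finite subfamily intersects in $[\max_i a_{\gamma_i},\min_j b_{\delta_j}]$, nonempty since every $a_{\gamma_i}<b_{\delta_j}$ — so Cantor $\kappa$-completeness produces $c$ with $a_\gamma\le c\le b_\delta$ for all $\gamma,\delta$, in particular $a_\gamma\le c\le b_\gamma$. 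If $c$ equals no $a_\gamma$ and no $b_\gamma$, then $a_\gamma<c<b_\gamma$ for every $\gamma$ and we are finished. The remaining case is $c=a_{\gamma_0}$ for some $\gamma_0$ (the case $c=b_{\gamma_0}$ being symmetric, and both impossible at once since $a_{\gamma_0}<b_{\gamma_0}$): here $a_{\gamma_0}=\max_{\gamma}a_\gamma$, and I would invoke algebraic $\kappa$-saturation at infinity on the fewer than $\kappa$ positive elements $\{1/(b_\delta-a_{\gamma_0})\}_{\delta\in\Gamma}$ to obtain a bound $M>0$, so that $b_\delta-a_{\gamma_0}\ge 1/M$ for all $\delta$; then $c':=a_{\gamma_0}+\tfrac{1}{2M}$ satisfies $a_\gamma\le a_{\gamma_0}<c'<a_{\gamma_0}+\tfrac{1}{M}\le b_\delta$ for all $\gamma,\delta$, hence $c'\in\bigcap_{\gamma}(a_\gamma,b_\gamma)$.

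I expect the only genuine subtlety to be the endpoint obstruction in $(ii)\Rightarrow(i)$: recognizing that the witness $c$ produced by Cantor completeness can land on an attained endpoint, and that saturation at infinity is precisely what nudges it into the open intersection. Everything else is bookkeeping — checking that $\card(\Gamma)$, $\card(\Gamma)^2$, and the number of finite subsets of $\Gamma$ all stay below the infinite cardinal $\kappa$, and that the auxiliary families inherit the F.I.P. Specializing to $\kappa=\aleph_1$ recovers the statement that algebraic saturation is equivalent to Cantor completeness together with every countable subset being bounded.
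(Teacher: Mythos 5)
Your proof is correct. It uses the same two ingredients as the paper's but arranges them in the opposite order in the substantive direction $(ii)\Rightarrow(i)$: the paper first applies saturation at infinity to the reciprocal gaps $\{1/(b_l-a_k)\}_{l,k\in\Gamma}$ to extract a uniform $\rho>0$ with $b_l-a_k\ge\rho$ for all $l,k$, shrinks every interval to $[a_\gamma+\rho/2,\,b_\gamma-\rho/2]$ (verifying the F.I.P.\ of the shrunk family), and only then invokes Cantor $\kappa$-completeness, so the witness lands inside the open intervals automatically; you invoke Cantor $\kappa$-completeness first and repair the witness by a perturbation of size $1/(2M)$ only when it is pinned at an attained endpoint. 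Your $1/(2M)$ is exactly the paper's $\rho/2$, produced by the same appeal to saturation at infinity, so the difference is one of arrangement rather than of substance: the paper's order of operations buys freedom from your case analysis (including the observation that $c$ cannot simultaneously be an attained left and right endpoint), while yours makes more visible the precise role of saturation at infinity, namely dislodging the Cantor witness from an attained endpoint. In $(i)\Rightarrow(ii)$ the arguments essentially coincide: the paper's degenerate case ``$a_k=b_p$ for some $k,p$'' plays the role of your singleton case $I_{F_0}=\{c\}$, and for saturation at infinity the paper intersects the unbounded intervals $(a,\infty)$ where you use $(0,1/|c_\gamma|)$ --- a cosmetic difference, though your choice sidesteps any quibble about whether $(a,\infty)$ qualifies as an open interval in Definition~\ref{D: algebraic saturation}. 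One small economy: your crossed family $\{[a_\gamma,b_\delta]\}_{(\gamma,\delta)\in\Gamma\times\Gamma}$ has the same total intersection as the diagonal family $\{[a_\gamma,b_\gamma]\}_{\gamma\in\Gamma}$, so the passage to $\Gamma\times\Gamma$ is harmless but unnecessary.
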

\begin{proof}
  \mbox{}
  \begin{description}
    \item[$(i)\Rightarrow(ii)$:] Let $\mathcal C=:\{[a_{\gamma},b_{\gamma}]\}_{\gamma\in \Gamma}$ and $\mathcal O=:\{(a_{\gamma}, b_{\gamma})\}_{\gamma\in \Gamma}$ be families of fewer than $\kappa$ bounded closed and open intervals, respectively, where $\mathcal C$ has the F.I.P.. If $a_k=b_p$ for some $k,p\in\Gamma$, then  $\bigcap_{\gamma\in \Gamma}[a_{\gamma},b_{\gamma}]=\{a_k\}$ by the F.I.P. in $\mathcal C$. Otherwise, $\mathcal O$ has the F.I.P.; thus, there exists $\alpha\in\bigcap_{\gamma\in \Gamma} (a_{\gamma}, b_{\gamma})\subseteq\bigcap_{\gamma\in \Gamma}[a_{\gamma}, b_{\gamma}]$ by algebraic $\kappa$-saturation. Hence $\mathbb K$ is Cantor $\kappa$-complete. To show that $\mathbb K$ is algebraically $\kappa$-saturated at infinity, let $A\subset\mathbb K$ be a set with $\card(A)<\kappa$. Then $\bigcap_{a\in A}(a,\infty)\neq\emptyset$ by algebraic $\kappa$-saturation.

    \item[$(ii)\Rightarrow(i)$:] Let $\{(a_{\gamma},b_{\gamma})\}_{\gamma\in \Gamma}$ be a family of fewer than $\kappa$ elements with the F.I.P.. Without loss of generality, we can assume that each interval is bounded. As $\mathbb K$ is algebraically $\kappa$-saturated at infinity, there exists $\frac{1}{\rho} \in \ub(\{ \frac{1}{b_l -a_k} : l, k\in \Gamma \})$ (that is, $\frac{1}{b_l-a_k}\le\frac{1}{\rho}$ for all $l,k\in \Gamma$) which implies that $\rho>0$ and that $\rho$ is a lower bound of $\{b_l-a_k : l, k\in \Gamma\}$. Next, we show that the family $\{[a_\gamma+\frac{\rho}{2},b_\gamma-\frac{\rho}{2}]\}_{\gamma\in\Gamma}$ satisfies the F.I.P.. Let $\gamma_1,\ldots,\gamma_n\in\Gamma$ and $\zeta=:\max_{k\le n}\{a_{\gamma_k} + \frac{\rho}{2}\}$. Then, for all $m\in\mathbb N$ such that $m\le n$, we have $a_{\gamma_m} + \frac{\rho}{2}\le \zeta \le b_{\gamma_m} - \frac{\rho}{2}$ by the definition of $\rho$; thus, $\zeta\in[a_{\gamma_m}+\frac{\rho}{2}, b_{\gamma_m}-\frac{\rho}{2}]$ for $m\le n$. By Cantor $\kappa$-completeness, there exists $\alpha\in\bigcap_{\gamma\in\Gamma} [a_{\gamma}+\frac{\rho}{2}, b_{\gamma}-\frac{\rho}{2}]\subseteq\bigcap_{\gamma\in\Gamma}(a_{\gamma},b_{\gamma})$.
  \end{description}
\end{proof}

\begin{corollary}\label{C: sat -> seq}
  Let $\mathbb K$ be an ordered field. If $\mathbb K$ is algebraically saturated, then every convergent sequence is eventually constant. Consequently, $\mathbb K$ is sequentially complete.
\end{corollary}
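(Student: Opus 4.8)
The plan is to prove the contrapositive of the first assertion: if some convergent sequence in $\mathbb{K}$ is \emph{not} eventually constant, then $\mathbb{K}$ is not algebraically saturated, i.e. there is a countable family of open intervals with the finite intersection property whose intersection is empty. So suppose $\{x_n\}$ converges to $L$ but is not eventually constant. Passing to a subsequence, I may assume $x_n \neq L$ for all $n$ (infinitely many terms differ from $L$; relabel them). Then $\{|x_n - L|\}$ is a sequence of strictly positive elements of $\mathbb{K}$ converging to $0$; by passing to a further subsequence I can also arrange that the distances are strictly decreasing, say $\delta_1 > \delta_2 > \cdots > 0$ with $\delta_n = |x_n - L| \to 0$. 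The key point to extract is that the nested ``punctured'' neighborhoods $(L - \delta_n, L + \delta_n) \setminus \{L\}$ have empty intersection over $n \in \mathbb{N}$, because any $y$ in all of them would satisfy $0 < |y - L| < \delta_n$ for every $n$, contradicting $\delta_n \to 0$ (this is exactly where convergence of $\{\delta_n\}$, hence of $\{x_n\}$, is used).

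The mild technical obstacle is that $(L-\delta_n, L+\delta_n)\setminus\{L\}$ is not an open interval, so I cannot feed it directly to the definition of algebraic saturation. I would fix this the standard way: consider instead the countable family consisting of the intervals $(L - \delta_n,\, L + \delta_n)$ for $n \in \mathbb{N}$ together with the two intervals $(-\infty, L)$ and $(L, \infty)$ — or, to stay with bounded intervals if one prefers, $(L - \delta_1, L)$ and $(L, L + \delta_1)$. Every finite subfamily has nonempty intersection: finitely many of the symmetric intervals intersect in $(L - \delta_N, L + \delta_N)$ for the largest index $N$ present, and since $\delta_N > 0$ this punctured interval meets both $(-\infty,L)$ and $(L,\infty)$ (e.g. $L - \delta_N/2$ and $L + \delta_N/2$ lie in it). So the family has the F.I.P. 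But its total intersection is $\big(\bigcap_n (L-\delta_n, L+\delta_n)\big) \cap \{x \neq L\} = \emptyset$ by the previous paragraph. Hence $\mathbb{K}$ fails to be algebraically saturated, proving the first claim.

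For the ``consequently'' part, I invoke Theorem~\ref{T: sat <-> cantor}: an algebraically saturated field is, in particular, Cantor complete (take $\kappa = \aleph_1$ in that theorem, whose hypothesis that $\kappa$ be uncountable is met). Now let $\{x_n\}$ be any Cauchy sequence in $\mathbb{K}$. If its range is finite, the Cauchy condition forces it to be eventually constant, hence convergent. If its range is infinite, I would build a nested sequence of closed intervals capturing the tail of $\{x_n\}$: using the Cauchy property, pick $N_1 < N_2 < \cdots$ and positive $\epsilon_k \to 0$ with $|x_n - x_m| < \epsilon_k$ for all $n,m \ge N_k$, and set $I_k = [x_{N_k} - \epsilon_k,\, x_{N_k} + \epsilon_k]$; these are nested (for $k < k'$, $x_{N_{k'}} \in I_k$ and $\epsilon_{k'} < \epsilon_k$ give $I_{k'} \subseteq I_k$, after a routine check) and bounded, so Cantor completeness yields a point $L \in \bigcap_k I_k$. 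Then $|x_n - L| \le 2\epsilon_k$ for $n \ge N_k$, so $x_n \to L$ since $\epsilon_k \to 0$. Thus every Cauchy sequence converges, i.e. $\mathbb{K}$ is sequentially complete. (One subtlety worth a sentence: one must know $\epsilon_k \to 0$ is attainable and meaningful, but that is automatic since the $\epsilon_k$ can be taken from a fixed null sequence such as $\epsilon_k = |x_{m_k} - x_{n_k}|$ arising from distinct range values, or simply chosen decreasing; no Archimedean assumption is needed because the argument only uses that the chosen bounds tend to $0$ in $\mathbb{K}$, which we arrange by construction.) The main obstacle in the whole argument is the cosmetic one of replacing punctured neighborhoods by genuine open intervals while preserving the F.I.P.; everything else is bookkeeping with subsequences.
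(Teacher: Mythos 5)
There is a genuine gap in your proof of the first assertion. The family you build to witness failure of saturation --- the intervals $(L-\delta_n,\,L+\delta_n)$ together with the two rays $(-\infty,L)$ and $(L,\infty)$ (or the bounded versions $(L-\delta_1,L)$ and $(L,L+\delta_1)$) --- does \emph{not} have the finite intersection property: the two-element subfamily $\{(-\infty,L),\,(L,\infty)\}$ already has empty intersection. You verified that $(L-\delta_N,L+\delta_N)$ meets each ray separately, but the F.I.P. requires the \emph{common} intersection of every finite subfamily to be nonempty, and $(-\infty,L)\cap(L,\infty)=\emptyset$. Relatedly, the total intersection of your family is contained in $(-\infty,L)\cap(L,\infty)$, not in the union $\mathbb K\setminus\{L\}$ as you wrote; so the family is simply not a counterexample to saturation, and the contrapositive argument collapses. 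The repair is short: either pass to a subsequence lying entirely on one side of $L$ (infinitely many terms satisfy $x_{n_k}>L$ or infinitely many satisfy $x_{n_k}<L$) and use the one-sided intervals $(L,\,L+\delta_k)$, whose F.I.P. is clear and whose common point would violate $\delta_k\to 0$; or do what the paper does and forget about $L$ altogether, taking the family $(0,\delta_k)$ with $\delta_k=|x_{n_k}-L|>0$ --- a point of $\bigcap_k(0,\delta_k)$ is a positive element below every $\delta_k$, contradicting $\delta_k\to 0$.

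Your treatment of the ``consequently'' part is essentially correct but much more roundabout than necessary, and differs from the paper's route. The paper simply notes that for a Cauchy sequence $|x_{n+1}-x_n|\to 0$, so by the first assertion this sequence is eventually constant, hence eventually $0$, hence $\{x_n\}$ is eventually constant and therefore convergent. Your detour through Theorem~\ref{T: sat <-> cantor} and nested closed intervals works, but note two things: (a) the infinite-range case you analyze is actually vacuous in an algebraically saturated field, since producing a positive null sequence $\epsilon_k\to 0$ there contradicts the first assertion you just proved; and (b) your intervals $I_k=[x_{N_k}-\epsilon_k,\,x_{N_k}+\epsilon_k]$ need not be nested as claimed ($|x_{N_{k'}}-x_{N_k}|+\epsilon_{k'}<2\epsilon_k$ is all you get), though they do satisfy the F.I.P., which is all that Cantor completeness as defined in Definition~\ref{D: completeness} requires.
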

\begin{proof}
  Let $x_n\to L$ and assume that $\{x_n\}$ is not eventually constant. Then there exists a subsequence $\{x_{n_k}\}$ such that $\delta_k=:|x_{n_k}-L|>0$ for all $k\in\mathbb N$. Thus, there exists $\epsilon\in\bigcap_{k\in\mathbb N} (0,\delta_k)$ by assumption; hence $0<\epsilon<\delta_k$ for all $k\in\mathbb K$, which contradicts $\delta_k\to0$. Finally, suppose $\{x_n\}$ is a Cauchy sequence and observe that $|x_{n+1}-x_n|\to0$. Thus, by what we just proved, $|x_{n+1}-x_n|=0$ for sufficiently large $n\in\mathbb N$. Hence $\{x_n\}$ is eventually constant.
\end{proof}

\begin{corollary}\label{C: cantor -> sequential}
  Let $\mathbb K$ be an ordered field. If $\mathbb K$ is Cantor complete, but not algebraically saturated, then:
  \begin{thm-enumerate}
  \item $\mathbb{K}$ has an increasing unbounded sequence.
  \item $\mathbb{K}$ is sequentially complete.
  \end{thm-enumerate}
\end{corollary}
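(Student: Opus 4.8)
The plan is to leverage Corollary~\ref{C: sat -> seq} together with Theorem~\ref{T: sat <-> cantor} to reduce everything to a single dichotomy about saturation at infinity. The key observation is this: by Theorem~\ref{T: sat <-> cantor}, a field that is Cantor $\aleph_1$-complete is algebraically saturated if and only if it is algebraically saturated at infinity. So if $\mathbb{K}$ is Cantor complete but \emph{not} algebraically saturated, then $\mathbb{K}$ must fail to be algebraically saturated at infinity — that is, there is a countable set $A = \{c_n : n \in \mathbb{N}\} \subset \mathbb{K}$ which is unbounded above (if it were unbounded below, replace each $c_n$ by $-c_n$; if it happened to be bounded above, the family $\{(c,\infty)\}_{c \in A}$ would witness a failure of the corresponding intersection, but unboundedness of a countable set is exactly what ``not saturated at infinity'' gives us). This immediately yields part (\emph{i}): from the countable unbounded set $A$, define $x_n =: \max\{c_1, \ldots, c_n\}$; this is an increasing sequence, and it is unbounded because its range dominates every element of the unbounded set $A$. (Strictly, I should check that $\{x_n\}$ is genuinely unbounded: given any $b \in \mathbb{K}$, since $A$ is unbounded there is some $c_m > b$, and then $x_m \ge c_m > b$.)

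For part (\emph{ii}), I plan to argue directly that every Cauchy sequence in $\mathbb{K}$ converges, without passing through saturation — the existence of the unbounded increasing sequence from part (\emph{i}) is the tool. Let $\{x_n\}$ be a Cauchy sequence. For each $k \in \mathbb{N}$ choose $N_k$ such that $|x_n - x_m| < \tfrac{1}{k}$ for all $n, m \ge N_k$, and we may take $N_k$ increasing in $k$. Then consider the nested family of closed intervals $I_k =: [x_{N_k} - \tfrac{1}{k},\, x_{N_k} + \tfrac{1}{k}]$. One checks this is a nested (or at least F.I.P.) family of bounded closed intervals: for $j \le k$, every $x_n$ with $n \ge N_k$ lies in both $I_j$ and $I_k$, so the finite intersections are nonempty. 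By Cantor completeness there is $L \in \bigcap_k I_k$. The remaining issue is to show $x_n \to L$ in the order topology, and \emph{this is where the non-Archimedean setting bites}: $\tfrac{1}{k}$ need not tend to $0$, so ``$L \in \bigcap_k I_k$'' does not by itself force convergence. This is the main obstacle, and it is exactly what part (\emph{i}) is for.

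To finish, I would use the unbounded increasing sequence $\{x_n\}$ of part (\emph{i}) to manufacture a sequence decreasing to $0$: set $\epsilon_n =: 1/x_n$ for $n$ large enough that $x_n > 0$; then $\{\epsilon_n\}$ is a positive sequence, and it is \emph{coinitial} with the positive elements of $\mathbb{K}$ — given any $\delta \in \mathbb{K}_+$, unboundedness of $\{x_n\}$ gives $x_n > 1/\delta$ for some $n$, hence $0 < \epsilon_n < \delta$. Now given $\delta \in \mathbb{K}_+$, pick such an $\epsilon_n < \delta$, pick $k > x_n$ (using unboundedness of $\{x_m\}$ again to find such a natural number index, or rather: pick $k$ with $1/k < \epsilon_n$, which is possible since... ) — here I need to be slightly careful, since $\mathbb{K}$ non-Archimedean means I cannot assume $1/k$ gets arbitrarily small over \emph{natural} $k$; instead the clean route is: the family $\{(L - \delta, L + \delta)\}$ for $\delta$ ranging over $\{\epsilon_n\}$ has the F.I.P. intersected with the tails of the Cauchy sequence, and one shows the Cauchy sequence is eventually inside each such interval by combining the Cauchy estimate $|x_n - x_m| < 1/k$ with $L \in I_k$ to get $|x_n - L| \le |x_n - x_{N_k}| + |x_{N_k} - L| < 1/k + 1/k = 2/k$ for $n \ge N_k$, and then choosing $k$ so that $2/k < \delta$ — which is possible precisely because $\{1/k : k \in \mathbb{N}\}$ is coinitial with $\mathbb{K}_+$, and \emph{that} coinitiality is equivalent to the existence of an unbounded increasing sequence of the sort produced in part (\emph{i}). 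So part (\emph{i}) feeds directly into closing part (\emph{ii}), and the whole argument hinges on converting ``not saturated at infinity'' into ``$\mathbb{N}$ (or some countable set) is unbounded'' and hence into the needed coinitiality statement.
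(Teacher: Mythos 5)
Your part (\emph{i}) is correct and follows the paper's route: Theorem~\ref{T: sat <-> cantor} converts ``Cantor complete but not algebraically saturated'' into ``not algebraically saturated at infinity,'' producing a countable unbounded set, and your choice $x_n =: \max\{c_1,\dots,c_n\}$ is if anything cleaner than the paper's inductive selection, since it visibly dominates the whole unbounded set (it is only non-decreasing as written, but passing to the indices where the maximum strictly grows fixes that).

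Part (\emph{ii}), however, does not close as written. You correctly diagnose the obstacle --- in a non-Archimedean field $\{1/k\}_{k\in\mathbb N}$ is not coinitial with $\mathbb K_+$, so $L\in\bigcap_k I_k$ with radii $1/k$ says nothing about convergence --- and you correctly name the tool: $\epsilon_n =: 1/x_n$ \emph{is} coinitial with $\mathbb K_+$ because $\{x_n\}$ is unbounded. But your final estimate reverts to $1/k$: you bound $|x_n-L|<2/k$ and then claim $k$ can be chosen with $2/k<\delta$ ``because $\{1/k : k\in\mathbb N\}$ is coinitial with $\mathbb K_+$, and that coinitiality is equivalent to the existence of an unbounded increasing sequence.'' That equivalence is false: coinitiality of $\{1/k\}_{k\in\mathbb N}$ is equivalent to $\mathbb N$ being unbounded in $\mathbb K$, i.e.\ to the Archimedean property, which is exactly what you may not assume (and the non-Archimedean case is the only interesting one here; in Example~\ref{E: robinson asymptotic} the sequence $\{\rho^{-n}\}$ is unbounded while $\mathbb N$ is bounded). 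The repair is the step you half-state and then abandon: build the intervals with the coinitial radii from the start. Choose $N_k$ so that $|x_n-x_m|<\epsilon_k$ for all $n,m\ge N_k$ (legitimate, since the Cauchy condition quantifies over all of $\mathbb K_+$), set $I_k =: [x_{N_k}-\epsilon_k,\ x_{N_k}+\epsilon_k]$, obtain $L\in\bigcap_k I_k$ by Cantor completeness, and conclude $|x_n-L|\le 2\epsilon_k<\delta$ for $n\ge N_k$ and $k$ chosen by coinitiality of $\{\epsilon_k\}$. With that substitution your argument becomes exactly the paper's proof.
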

\begin{proof}
  \mbox{}
  \begin{thm-enumerate}
  \item By Theorem~\ref{T: sat <-> cantor} we know that $\mathbb K$ has a subset $A\subset\mathbb K$ that is unbounded. Let $x_1\in A$ be arbitrary. Now assume that $x_n$ has been defined, then there exists $c\in A$ such that $x_n<c$ by assumption; define $x_{n+1}=:c$. Using this inductive definition (and the axiom of choice), we find that $\{x_n\}$ is an increasing unbounded sequence in $\mathbb K$.
  \item By Part $(i)$ of this corollary, we know there exists an unbounded increasing sequence $\{\frac{1}{\epsilon_n}\}$. We observe that $\{\epsilon_n\}$ is a decreasing never-zero sequence that converges to zero. Let $\{x_n\}$ be a Cauchy sequence in $\mathbb K$. For all $n\in\mathbb N$, we define $S_n=:[x_{m_n} - \epsilon_n, x_{m_n} + \epsilon_n]$, where $m_n=:\min\{k\in\mathbb N : (\forall l,j\in\mathbb N)(k\le l,j\Rightarrow |x_l-x_j|<\epsilon_n)\}$ (which exists as $\{x_n\}$ is a Cauchy sequence). To show that the family $\{S_n\}_{n\in\mathbb{N}}$ satisfies the finite intersection property, let $A\subset \mathbb N$ be finite and $\rho=:\max(A)$; then we observe that $x_{m_{\rho}}\in S_k$ for any $k\in A$ because $m_k\le m_{\rho}$. Therefore there exists $L\in\bigcap_{k=1}^{\infty}S_k$ by Cantor completeness. To show that $x_n\to L$, we first observe that, given any $\delta\in\mathbb K_+$, we can find an $n\in\mathbb N$ such that $2\epsilon_n<\delta$ because $\{\epsilon_n\}$ converges to zero. As well, we note that $L\in S_n$ and that the width of $S_n$ is $2\epsilon_n$ for all $n\in\mathbb N$. Thus, given $\delta\in\mathbb K_+$ we can find $n\in\mathbb N$ such that $2\epsilon_n<\delta$, and, because $(\forall l\in\mathbb N)(m_n\le l\Rightarrow x_l\in S_n)$, we have $(\forall l\in\mathbb N)(m_n\le l\Rightarrow |L-x_l|<2\epsilon_n<\delta)$.
  \end{thm-enumerate}
\end{proof}

The previous two corollaries can be summarized in the following result.

\begin{corollary}\label{T: non-arch completeness}
  Let $\mathbb K$ be an ordered field, then we have the following implications
  $$\mathbb K\textrm{ is }\kappa\textrm{-saturated}\implies\mathbb K\textrm{ is Cantor }\kappa\textrm{-complete}\implies \mathbb K\textrm{ is sequentially complete}$$
\end{corollary}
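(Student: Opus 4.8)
The plan is to prove the two implications separately, treating each as a near-immediate consequence of the machinery already assembled in this section. For the first implication, ``$\mathbb K$ is $\kappa$-saturated $\implies$ $\mathbb K$ is Cantor $\kappa$-complete,'' I would simply invoke Theorem~\ref{T: sat <-> cantor}: algebraic $\kappa$-saturation is there shown to be equivalent to the conjunction of Cantor $\kappa$-completeness \emph{and} algebraic $\kappa$-saturation at infinity, so in particular saturation implies Cantor completeness by dropping the second conjunct. This is a one-line deduction and carries no real content beyond citing the theorem.

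For the second implication, ``$\mathbb K$ is Cantor $\kappa$-complete $\implies$ $\mathbb K$ is sequentially complete,'' I would split into the two cases that the preceding corollaries were designed to handle. If $\mathbb K$ happens also to be algebraically saturated, then by Corollary~\ref{C: sat -> seq} every Cauchy sequence is eventually constant, hence convergent, so $\mathbb K$ is sequentially complete. If instead $\mathbb K$ is Cantor $\kappa$-complete but \emph{not} algebraically saturated, then $\mathbb K$ is in particular Cantor complete (i.e. Cantor $\aleph_1$-complete) since $\kappa$ is uncountable — wait, one should be slightly careful here: the statement of the corollary as displayed writes ``Cantor $\kappa$-complete'' without an explicit hypothesis that $\kappa$ is uncountable, but Cantor $\kappa$-completeness for any infinite $\kappa$ implies the $\aleph_1$ case, so Corollary~\ref{C: cantor -> sequential}(ii) applies and yields sequential completeness directly.

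The only genuine subtlety — and the step I expect to be the main obstacle in writing this up cleanly rather than a mathematical difficulty — is the case split on whether $\mathbb K$ is algebraically saturated: one must make sure the two corollaries together exhaust all possibilities, and that the ``not algebraically saturated'' branch really does only need Cantor $\aleph_1$-completeness (which follows from Cantor $\kappa$-completeness for the relevant $\kappa$) rather than the full strength of $\kappa$-completeness. Once that bookkeeping is checked, the proof is just: apply Theorem~\ref{T: sat <-> cantor} for the first arrow; then for the second arrow, if saturated use Corollary~\ref{C: sat -> seq}, and if not saturated use Corollary~\ref{C: cantor -> sequential}(ii). I would present it in roughly three sentences, since the substantive work has all been done in the corollaries above.
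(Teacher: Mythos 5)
Your proof is correct and follows exactly the paper's own argument: the first arrow is immediate from Theorem~\ref{T: sat <-> cantor}, and the second arrow is handled by the same case split on algebraic saturation, using Corollary~\ref{C: sat -> seq} in one case and Corollary~\ref{C: cantor -> sequential} in the other. Your extra bookkeeping remark (that Cantor $\kappa$-completeness for uncountable $\kappa$ implies Cantor $\aleph_1$-completeness, so Corollary~\ref{C: cantor -> sequential} applies) is a sound and welcome clarification that the paper leaves implicit.
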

\begin{proof}
  The first implication follows from Theorem~\ref{T: sat <-> cantor}. For the second implication we have two cases: $\mathbb{K}$ is algebraically saturated, or $\mathbb{K}$ is not-algebraically saturated. For the first case we have Corollary~\ref{C: sat -> seq} and for the second we have Corollary~\ref{C: cantor -> sequential}. 
\end{proof}


\section{Valuation Fields}\label{S: val}

Before we begin with the examples, see the next section, we quickly define \emph{ordered valuation fields} for our use later; for readers interested in the subject we refer to P. Ribenboim~\cite{riben}, A.H. Lightstone \& A. Robinson~\cite{lightstone}, and Todorov~\cite{todor-inf}. In what follows, let $\mathbb K$ be an ordered field.
 
\begin{definition}[Ordered Valuation Field]
  The mapping $v:\mathbb K\to\mathbb R\cup\{\infty\}$ is called a \emph{non-Archimedean valuation} on $\mathbb K$ if, for every $x,y\in\mathbb K$,
  \begin{def-enumerate}
    \item $v(x)=\infty$ \ifff $x=0$
    \item $v(xy)=v(x)+v(y)$ (Logarithmic property)
    \item $v(x + y)\ge\min\{v(x), v(y)\}$ (Non-Archimedean property)
    \item $|x| < |y|$ implies $v(x) \ge v(y)$ (Convexity property)
  \end{def-enumerate}
  The structure $(\mathbb K,v)$ is called an \emph{ordered valuation field}. As well, a valuation $v$ is \emph{trivial} if $v(x)=0$ for $x\in\mathbb K\setminus\{0\}$; otherwise, $v$ is \emph{non-trivial}.
\end{definition}
 
\begin{remark}[Krull's Valuation]
  It is worth noting that the definition given above can be considered as a specialized version of that given by Krull: a valuation is a mapping $v:\mathbb K\to G\cup\{\infty\}$ where $G$ is an ordered abelian group.
\end{remark}

\section{Examples of Sequentially and Spherically Complete Fields}\label{S: examples 1}

Although they may not be as well known as Archimedean fields,  non-Archimedean fields have been used in a variety of different settings: most notably to produce non-standard models in the model theory of fields and to provide a field for non-standard analysis. In an effort to make non-Archimedean fields seem less exotic than the may initially appear, we have compiled a very modest list of non-Archimedean fields that have been used in different areas of mathematics. Our first couple of examples are of fields of formal power series, which should be more familiar to the reader, as the use of these fields as generalized scalars in analysis predates A. Robinson's work in non-standard analysis \cite{todor-asymp} and are used quite often as non-standard models in model theory. For more information on these fields, we direct the reader to D. Laugwitz~\cite{laugwitz} and Todorov \& Wolf~\cite{todor-wolf}. The last of the examples are based on A. Robinson's theory of non-standard extensions. The key distinction that we would like to emphasize between these two forms of non-Archimedean fields, is that the fields of power series are at most spherically complete while the fields from non-standard analysis are always Cantor complete if not algebraically saturated.

In what follows, $\mathbb{K}$ is a totally ordered field. As well, in the next four examples we present sets of series for which we assume have been supplied with the normal operations of \emph{polynomial-like} addition and multiplication. Under this assumption, each of the following sets are in fact fields. 

\begin{example}[Hahn Series]
  Let $\supp(f)$ denote the \emph{support} of $f$ (i.e. values in the domain for which $f$ is non-zero). The field of \emph{Hahn series} is defined to be the set $$\mathbb K((t^{\mathbb R}))=: \left\{\sum_{r\in\mathbb R} a_rt^r : a_r\in\mathbb K\emph{ and }\supp(r\mapsto a_r)\subseteq\mathbb R\emph{ is well ordered}\right\}$$ which can be supplied with the \emph{canonical valuation} $\nu:\mathbb K((t^{\mathbb R}))\to\mathbb R\cup\{\infty\}$ defined by $\nu(0)=:\infty$ and $\nu(A)=:\min(\supp(r\mapsto a_r))$ for all $A\in\mathbb K((t^{\mathbb R}))$. As well, $\mathbb K((t^{\mathbb R}))$ has a natural ordering given by
  \[\mathbb K((t^{\mathbb R}))_+=:\left\{A=\sum_{r\in\mathbb R} a_rt^r \in\mathbb K((t^{\mathbb R})) : a_{\nu(A)}>0\right\}\]
\end{example}

\begin{remark}
  For our purposes here, the preceding definition of the field of Hahn series is sufficient; however, there is a more general definition in which the additive group $\mathbb{R}$ is replaced with an abelian ordered group $G$.
\end{remark}

\begin{example}[Levi-Civita]
  The field of Levi-Civita series is defined to be the set $$\mathbb K\langle t^{\mathbb R}\rangle=:\left\{\sum_{n=0}^{\infty}a_nt^{r_n} : a_n\in\mathbb K \emph{ and }\{r_n\}\emph{ is strictly increasing and unbounded in }\mathbb R\right\}.$$ As $\{r_n : n\in\mathbb N\}$ is well-ordered whenever $\{r_n\}$ is strictly increasing, we can embed $\mathbb K\langle t^{\mathbb R}\rangle$ into $\mathbb K((t^{\mathbb R}))$ in an obvious way: $\sum_{n=0}^{\infty}a_nt^{r_n}\mapsto\sum_{k\in\mathbb R}\beta_kt^k$ where $\beta_{r_n}=:a_n$ for all $n\in\mathbb N$, and $\beta_k=:0$ for $k\not\in\textrm{range}(\{r_n\})$. Thus, $\mathbb K\langle t^{\mathbb R}\rangle$ can be ordered by the ordering inherited from $\mathbb K((t^{\mathbb R}))$.
\end{example}

\begin{example}[Laurent Series]
  The field 
  $$\mathbb K(t^{\mathbb Z})=:\left\{\sum_{n=m}^{\infty}a_nt^n : a_n\in\mathbb K \emph{ and }m\in\mathbb Z\right\}$$ of formal \emph{Laurent series} have a very simple embedding into $\mathbb K\langle t^{\mathbb R}\rangle$, and, thus, $\mathbb K(t^{\mathbb Z})$ is an ordered field using the ordering inherited from $\mathbb K\langle t^{\mathbb R}\rangle$.
\end{example}

\begin{example}[Rational Functions]
  The field 
  $$\mathbb K(t)=:\left\{\frac{P(t)}{Q(t)} : P, Q\in\mathbb K[t]\emph{ and }Q\not\equiv0\right\}$$ of \emph{rational functions} can be embedded into $\mathbb K(t^{\mathbb Z})$ by associating each rational function with its Laurent expansion about zero; thus $\mathbb K(t)$ inherits the ordering from $\mathbb K(t^{\mathbb Z})$. As well, we can consider $\mathbb K\subset\mathbb K(t)$ under the canonical embedding given by $a\mapsto f_a$ where $f_a(t)\equiv a$, for all $a\in\mathbb K$.
\end{example}

As we mention above, each field that we defined in examples 1-4 can be structured as such: $$\mathbb K\subset \mathbb K(t)\subset \mathbb K(t^{\mathbb Z})\subset \mathbb K\langle t^{\mathbb R}\rangle\subset \mathbb K((t^{\mathbb R}))$$ What we claim, is that all of these extensions of $\mathbb K$ are in fact non-Archimedean (regardless of whether $\mathbb K$ is Archimedean or not). But this is simple: because each field is a subfield of the following fields, all we have to show is that $\mathbb K(t)$ is non-Archimedean, and this just follows from the observation that $\sum_{n=1}^{\infty}t^n=\frac{t}{1-t}\in\mathbb K(t)$ is a non-zero infinitesimal and $\sum_{n=-1}^{\infty} t^n=\frac{1}{t-t^2}\in\mathbb K(t)$ is infinitely large -- that is, $0<\frac{t}{1-t}<\frac{1}{n}$ and $\frac{1}{t-t^2}>n$ for all $n\in\mathbb{N}$.

\begin{theorem}\label{T: series real-closed}
  If $\mathbb K$ is real closed, then both $\mathbb K\langle t^{\mathbb R}\rangle$ and $\mathbb K((t^{\mathbb R}))$ are real closed.
\end{theorem}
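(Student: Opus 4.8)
The plan is to verify, for each of the fields $\mathbb K\langle t^{\mathbb R}\rangle$ and $\mathbb K((t^{\mathbb R}))$ (call the field in question $F$), the three defining properties of a real closed field in Definition~\ref{D: real closed}. Property~1, that $F$ is formally real, is already available: each of these fields was equipped with an explicit positive cone, so $F$ is orderable, hence formally real by Theorem~\ref{T: orderable formally real}. The remaining work is in properties~2 and~3, and the organizing tool throughout is the canonical valuation $\nu$: any nonzero $A\in F$ can be written $A = a\,t^{\gamma}(1+\eta)$, where $\gamma=\nu(A)$, $a=a_{\gamma}\in\mathbb K\setminus\{0\}$ is the leading coefficient, and $\eta\in F$ has $\supp(\eta)\subseteq(0,\infty)$, so $\eta$ is infinitesimal and $t^{\gamma}(1+\eta)>0$; in particular $\mathrm{sign}(A)=\mathrm{sign}(a)$.

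First I would settle property~2 (every element is $\pm$ a square). Given $A\neq 0$ written as above with, say, $A>0$, real-closedness of $\mathbb K$ gives $a=c^{2}$ for some $c\in\mathbb K$; divisibility of the exponent group $\mathbb R$ gives $t^{\gamma}=(t^{\gamma/2})^{2}$ --- this is precisely where divisibility is essential --- and it remains to produce a square root of $1+\eta$. For this I would use the formal binomial series $\sum_{k\ge 0}\binom{1/2}{k}\eta^{k}$ and verify it is a legitimate element of $F$: since $\supp(\eta)$ is well ordered and contained in the positive reals, the union $\bigcup_{k}\supp(\eta^{k})$ is again well ordered and each real number occurs in it for only finitely many $k$ (the standard ``Neumann lemma'' on well-ordered supports), so the sum defines a Hahn series whose square is $1+\eta$; hence $A=(c\,t^{\gamma/2}\,\sqrt{1+\eta})^{2}$, and if $A<0$ then $-A$ is a square, so $A$ is minus a square. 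For $\mathbb K\langle t^{\mathbb R}\rangle$ one additionally checks that this series has support of order type at most $\omega$, which holds because $\supp(\eta)$ is bounded below and has order type at most $\omega$.

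The main obstacle is property~3: every $P\in F[X]$ of odd degree has a root in $F$. Note this cannot simply be read off from the intermediate value theorem (Lemma~\ref{L: ivt}), since $F$ is non-Archimedean and hence not Dedekind complete. Instead I would induct on $\deg P$ using the Newton polygon of $P$ relative to $\nu$. After normalizing $P$ to be monic and possibly translating, the polygon prescribes the valuations of the roots of $P$ in an algebraic closure; selecting an edge of slope $-\rho$ and substituting $X\mapsto t^{\rho}X$ produces a polynomial whose coefficients are finite and whose reduction $\overline P$ modulo the maximal ideal $\{A\in F:\nu(A)>0\}$ is a nonzero polynomial over $\mathbb K$. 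The crucial arithmetic point is parity: because $\deg P$ is odd, the bookkeeping can be arranged so that some such $\overline P$ has odd degree, whereupon real-closedness of $\mathbb K$ furnishes a root $\overline r\in\mathbb K$. If $\overline r$ is simple I would lift it to a root of $P$ in $F$ by a Hensel-type successive-approximation scheme, the iterates forming a pseudo-Cauchy sequence that converges because $\mathbb K((t^{\mathbb R}))$ is spherically (maximally) complete; if $\overline r$ is multiple, one passes to the corresponding factor of $P$ of smaller degree --- keeping track of parity so that an odd-degree factor always survives --- and recurses. For $\mathbb K\langle t^{\mathbb R}\rangle$ one must instead verify that these approximations remain Levi-Civita series (equivalently, that only countably many exponents, of order type $\omega$, are generated), using the Cauchy-completeness of $\mathbb K\langle t^{\mathbb R}\rangle$; alternatively, one can sidestep this by proving that $\mathbb K(i)\langle t^{\mathbb R}\rangle$ and $\mathbb K(i)((t^{\mathbb R}))$ are algebraically closed (Neumann's theorem, valid since $\mathbb K(i)$ is algebraically closed when $\mathbb K$ is real closed) and then appealing to the Artin-Schreier characterization of real closed fields.

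I expect the genuinely delicate steps to be two. The first is the Newton-polygon combinatorics guaranteeing that at each stage of the recursion one meets a residue polynomial over $\mathbb K$ of \emph{odd} degree, so that real-closedness of $\mathbb K$ can actually be invoked; this is the place where the hypothesis on $\mathbb K$ enters and must be set up with care. The second, specific to $\mathbb K\langle t^{\mathbb R}\rangle$, is controlling the order type of the supports produced by the binomial series and by the successive approximations, so that one provably stays inside the Levi-Civita field rather than drifting out into the ambient Hahn field.
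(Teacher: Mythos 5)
The paper does not actually prove this theorem---its ``proof'' is the single line ``See Prestel~\cite{prestel}''---so there is no internal argument to measure yours against; what follows is an assessment of your sketch on its own terms. The outline is the standard valuation-theoretic route and is sound. Formal reality is immediate from the explicit positive cone. The square-root step is correct as written: factoring $A=a\,t^{\gamma}(1+\eta)$, using real closedness of $\mathbb K$ for the leading coefficient, divisibility of $\mathbb R$ for $t^{\gamma/2}$, and the binomial series for $\sqrt{1+\eta}$ is legitimate, and your observation that $\supp(\eta)$ is a well-ordered subset of $(0,\infty)$ and hence has a positive minimum $r_1$ (so $\supp(\eta^k)\subseteq[kr_1,\infty)$) is exactly what makes Neumann's lemma apply and what keeps the Levi-Civita case inside order type at most $\omega$. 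For odd-degree polynomials your Newton-polygon-plus-Hensel plan can be pushed through, but you are reconstructing by hand what standard machinery packages: both fields are henselian (the Hahn field because it is maximally/spherically complete, the Levi-Civita field because it is Cauchy complete for a rank-one valuation), and in equal characteristic zero a henselian valued field with divisible value group and algebraically closed residue field is algebraically closed; applying this to $F(\sqrt{-1\,})=\mathbb K(\sqrt{-1\,})((t^{\mathbb R}))$, whose residue field is algebraically closed by Artin--Schreier, and then using the Artin--Schreier characterization of real closed fields gives property~3 with no parity bookkeeping at all. This is precisely the alternative you mention in your last sentence, and I would promote it to the main argument: it eliminates the two steps you correctly flag as delicate (the odd-degree survival through the recursion, and the support-tracking through successive approximations in the Levi-Civita field, the latter being replaced by the single known fact of Cauchy completeness from Laugwitz~\cite{laugwitz}). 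As a blind proposal your sketch contains no wrong step, but it is an outline rather than a proof; the two places you identify as delicate are genuinely where the pages of work live, which is presumably why the paper elects to cite Prestel rather than write them out.
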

\begin{proof} 
  See Prestel~\cite{prestel}.
\end{proof}

\begin{definition}[Valuation Metric]
  Let $(\mathbb{K},v)$ be an ordered valuation field, then the mapping $d_v:\mathbb{K}\times\mathbb{K}\to\mathbb{R}$ given by $d_v(x,y)=e^{-v(x-y)}$, where $e^{-\infty}=0$, is the \emph{valuation metric} on $(\mathbb{K},v)$. We denote by $(\mathbb{K},d_v)$ the corresponding metric space. Further, if $c\in\mathbb{K}$ and $r\in\mathbb{R}_+$, then we define the corresponding sets of open and closed balls by
  \begin{align*}
    B(c, r)=:\{k\in\mathbb{K} : d_v(c,k)<r\}\\
    \overline{B}(c,r)=:\{k\in\mathbb{K} : d_v(c,k)\le r\}
  \end{align*}
  respectively.
\end{definition}

\begin{definition}[Spherically Complete]
  A metric space is \emph{spherically complete} if every nested sequence of closed balls has a non-empty intersection.
\end{definition}

From the definition, it should be clear that every metric space is spherically complete is sequentially complete.

\begin{example}
  The metric space $(\mathbb{R}((t^{\mathbb{R}})),d_v)$ is spherically complete. A proof of this can be found in W. Krull~\cite{krull} and Theorem~2.12 of W.A.J. Luxemburg~\cite{luxemburg-valuation}.
\end{example}

\begin{example}
  Both $\mathbb{R}(t^{\mathbb Z})$ and $\mathbb{R}\langle t^{\mathbb R}\rangle$ are sequentially complete fields. A proof of this can be found in D. Laugwitz~\cite{laugwitz}.
\end{example}


\section{Examples of Cantor Complete and Saturated Fields}\label{S: examples 2}
All the examples given from this point on are based on the non-standard analysis of A. Robinson; for an introduction to the area we refer the reader to either Robinson~\cite{robinson1} \cite{robinson2}, Luxemburg~\cite{luxemburg}, Davis~\cite{davis}, Lindstr\o m~\cite{lindstrom} or Cavalcante~\cite{cavalcante}. For axiomatic introductions in particular, we refer the reader to Lindstr\o m~\cite{lindstrom} (p. 81-83) and Todorov~\cite{todor-axiomatic} (p. 685-688).

As we remarked in the beginning of this section, what is so particularly interesting about these fields is that they are all Cantor complete; a property that none of the fields of formal power series with real coefficients share (i.e. the fields given by replacing $\mathbb{K}$ with $\mathbb{R}$).

\begin{example}
  Let $\kappa$ be an infinite cardinal and $^*\mathbb R$ be a $\kappa$-saturated non-standard extension of $\mathbb R$ (see Lindstr\o m~\cite{lindstrom}). Then $^*\mathbb R$ is a non-Archimedean, real closed, algebraically $\kappa$-saturated field (see Definition~\ref{D: algebraic saturation}) with $\mathbb R\subset{^*\mathbb R}$. It follows from Theorem~\ref{T: sat <-> cantor} that $^*\mathbb R$ is Cantor $\kappa$-complete.
\end{example}

\begin{definition}[Convex Subring]
  Let $F$ be an ordered ring and $R\subset F$ be a ring. Then $R$ is a \emph{convex subring}, if for all $x\in F$ and $y\in R$, we have that $0\le|x|\le|y|$ implies that $x\in R$. Similarily, an ideal $I$ in $R$ is called convex if, for every $x\in R$ and $y\in I$, $0\le|x|\le|y|$ implies that $x\in I$.
\end{definition}


\begin{example}[Robinson's Asymptotic Field]\label{E: robinson asymptotic}
  Let $^*\mathbb R$ be a non-standard extension of $\mathbb R$ and $\rho$ be a positive infinitesimal in $^*\mathbb{R}$. We define the sets of non-standard $\rho$-\emph{moderate} and $\rho$-\emph{negligible} numbers to be 
  \begin{eqnarray*}
    \mathcal M_{\rho}(^*\mathbb R)&=:&\{\zeta\in{^*\mathbb R} : |\zeta|\le \rho^{-m} \emph{ for some } m\in\mathbb N\}\\
    \mathcal N_{\rho}(^*\mathbb R)&=:&\{\zeta\in{^*\mathbb R} : |\zeta| < \rho^n \emph{ for all } n\in\mathbb N\}
  \end{eqnarray*}
  respectively. The \emph{Robinson field of real $\rho$-asymptotic numbers} (A. Robinson~\cite{robinson1},\cite{robinson2}) is the factor ring ${^{\rho}\mathbb R}=:\mathcal M_{\rho} / \mathcal N_{\rho}$. As it is not hard to show that $\mathcal M_{\rho}$ is a convex subring, and $\mathcal N_{\rho}$ is a maximal convex ideal, it follows that $^{\rho}\mathbb R$ is an orderable field. From Todorov \& Vernaeve~\cite{todor} (Theorem 7.3, p. 228) we know that $^\rho\mathbb{R}$ is real-closed. As well, $^{\rho}\mathbb R$ is not algebraically saturated as we observe that the sequence $\{\rho^{-n}\}_{n\in\mathbb N}$ is unbounded and increasing (see Corollary~\ref{C: cantor -> sequential}). Following from the preceding observation and the fact that $^{\rho}\mathbb R$ is Cantor complete (Todorov \& Vernaeve~\cite{todor-asymp}, Theorem 10.2, p. 24), we can apply Lemma~\ref{C: cantor -> sequential} to find that $^{\rho}\mathbb R$ is sequentially complete. 
\end{example}

There are several interesting properties of $^\rho\mathbb{R}$ that distinguish it from other fields: it is non-Archimedean; it is not algebraically saturated at infinity (see Definition~\ref{D: algebraic saturation}); it has a countable topological basis (the sequence of intervals $(-s^n, s^n)$, where $s$ is the image of $\rho$ under the quotient mapping from $\mathcal{M}_\rho$ to $^\rho\mathbb{R}$, forms a basis for the neighborhoods of 0). As well, the field of Hahn series $\mathbb{R}((t^\mathbb{R}))$ can be embedded into $^\rho\mathbb{R}$ (see Todorov \& Wolf~\cite{todor-wolf}) by mapping a Hahn series $\sum_{r\in\mathbb{R}} a_rt^r$ to the series $\sum_{r\in\mathbb{R}} a_rs^r$ in $^\rho\mathbb{R}$ -- which converges in $^\rho\mathbb{R}$, but not in $^*\mathbb{R}$! To summarize this fact, we present the following chain of inclusions that extends upon that given in the preceding subsection.

\[\mathbb R\subset \mathbb R(t)\subset \mathbb R(t^{\mathbb Z})\subset \mathbb R\langle t^{\mathbb R}\rangle\subset \mathbb R((t^{\mathbb R}))\subset {^\rho\mathbb{R}}\]

Thus ${^{\rho}\mathbb R}$ is a totally ordered, non-Archimedean, real closed, sequentially complete, and Cantor complete field that is not algebraically saturated and contains all of the fields of formal power series listed above.


\begin{example}
  Let $\mathcal M\subset{^*\mathbb R}$ be a convex subring of $^*\mathbb R$ and let $\mathcal I_{\mathcal M}$ be the set of non-invertible elements of $\mathcal M$. Then $\widehat{\mathcal M}=:\mathcal M/\mathcal I_{\mathcal M}$ is a real closed, Cantor complete field. In the case that $\mathcal M=\mathcal F(^*\mathbb R)$, then $\widehat{\mathcal M}=\mathbb R$. Otherwise, $\widehat{\mathcal M}$ is non-Archimedean. These fields, $\widehat{\mathcal M}$ are referred to as $\mathcal M$-asymptotic fields. It should also be noted that for some $\mathcal M$, it might be that $\widehat{\mathcal M}$ is saturated. For more discussion about these fields we refer to Todorov~\cite{todor-lecturenotes}. Notice that $^\rho\mathbb{R}$ is a particular $\mathcal{M}$-asymptotic field that is given by $\mathcal{M}=\mathcal{M}_\rho$.
\end{example}

\begin{remark}
  It may be worth noting that, in $^\rho\mathbb{R}$, the metric topology generated from the valuation metric, and the order topology are equivalent.
\end{remark}


\begin{thebibliography}{9}
  \bibitem{bartle} R. G. Bartle and D. R. Sherbert. \emph{Introduction to Real Analysis}. Wiley, third edition, 1999.
  \bibitem{bana} B. Banaschewski. \emph{On Proving the Existence of Complete Ordered Fields.} American Math Monthly 105(6), pp. 548-551.
  \bibitem{cavalcante} R. Cavalcante. \emph{Reduction of the Number of Quantifiers in Real Analysis through Infinitesimals}, 2008. arXiv:0809.4814v2.
  \bibitem{davis} M. Davis. \emph{Applied Non-standard Analysis}. Dover, 2005.
  \bibitem{HallTodDedekind11} James F. Hall and Todor D. Todorov, {\em Another Proof of the Existence a Dedekind Complete Totally Ordered Field}, arXiv:1101.3825v1 [math.LO] (http://arxiv.org/abs/1101.3825).
  \bibitem{hewitt} E. Hewitt and K. Stromberg. \emph{Real and Abstract Analysis}. Springer, first edition, 1965.
  \bibitem{hilbert} D. Hilbert, \" Uber den Zahlbegriff, \emph{Jber. Deutsche Math. Verein.} 8 (1900).
  \bibitem{keisler} H. J. Keisler, \emph{Foundations of Infinitesimal Calculus}. Prindle, Weber \& Schmidt, Boston 1976.
  \bibitem{keisler2} H. J. Keisler, \emph{Elementary Calculus: An Approach Using Infinitesimals}. Prindle, Weber \& Schmidt, Boston 1976.
  \bibitem{krull} W. Krull, \emph{Allgemeine Bewertungstheorie}, Journal f\"{u}r die reine und angewandte Mathematik 167 (1932), p. 225-236.
  \bibitem{lang} S. Lang, Algebra, Addison-Wesley, 1992.
  \bibitem{laugwitz} D. Laugwitz. {\em Eine Nichtarchimedische Erweiterung Angeordneter K\"orper}, Math. Nachrichte 37 (1968), p. 225-236.
  \bibitem{lindstrom} T. Lindstr\o m. An invitation to nonstandard analysis. In: Cutland N (ed) \emph{Nonstandard Analysis and its applications}. Cambridge University Press, London, 1988, pp 1-105.
\bibitem{lightstone} A. H. Lightstone and A. Robinson, {\em Nonarchimedean Fields and Asymptotic Expansions}, North-Holland, Amsterdam, 1975. 
\bibitem{luxemburg} W. A. J. Luxemburg, {\em Non-Standard Analysis: Lectures on A. Robinson's Theory of Infinitesimals and Infinitely large Numbers}, California Institute of Technology, Pasadena, California, 1973.
  \bibitem{luxemburg-valuation} W. A. J. Luxemburg, \emph{On a class of valuation fields introduced by Robinson}, Israel J. Math. 25 (1976), p. 189-201.
  \bibitem{oconnor} J. J. O'Connor and E. F. Robertson, \emph{History topic: The real numbers: Stevin to Hilbert}, October 2005. \url{http://www-history.mcs.st-and.ac.uk/PrintHT/Real_numbers_2.html}
  \bibitem{prestel} A. Prestel. \emph{Lectures on formally real fields}, Lecture notes in Mathematics 1093, Springer, New York, 1984.
  \bibitem{riben} P. Ribenboim, {\em The Theory of Classical Valuation}, Springer Monographs in Mathematics, Springer-Verlag, 1999.
  \bibitem{robinson1} A. Robinson. \emph{Nonstandard Analysis}. North Holland, Amsterdam.
  \bibitem{robinson2} A. Robinson. \emph{Function Theory on some Nonarchimedean fields}. Amer Math Monthly 80(6), Part II: Papers in the Foundations of Mathematics, pp 87-109.
  \bibitem{poma} W. Rudin. \emph{Principles of Mathematical Analysis}. McGraw Hill, third edition, 1976.
  \bibitem{strolux76} K. D. Stroyan and W. A. J. Luxemburg, {\em Introduction to the Theory of Infinitesimals}, Academic Press, New York, 1976.
  \bibitem{todor-axiomatic} T. D. Todorov, {\em An Existence of Solutions for Linear PDE with $C^{\infty}$-Coefficients in an Algebra of Generalized Functions}, in	Transactions of the American Mathematical Society, Vol. 348, 2, 	Feb. 1996, p.~673-689.
  \bibitem{todor-lecturenotes} T. D. Todorov, \emph{Lecture Notes: Non-Standard Approach to J.F. Colombeau's Theory of Generalized Function}, University of Vienna, Austria, 2006. arXiv:1010.3482v1.
  \bibitem{todor} T. D. Todorov and H. Vernaeve. \emph{Full algebra of generalized functions and non-standard asymptotic analysis}. Log Anal. 1:205-234, 2008. arXiv:0712.2603v1.
  \bibitem{todor-wolf} T. D. Todorov and R. Wolf. \emph{Hahn Field Representation of A. Robinson's Asymptotic Numbers}. Nonlinear Algebraic Analysis and Applications (ICGF 2000), 2004 CSP, p.357-374. arXiv:math/0601722v1.
  \bibitem{todor-inf} T. D. Todorov. \emph{Infinitesimals in Algebra}. Unpublished manuscript.
  \bibitem{todor-asymp} T. D. Todorov and H. Vernaeve. \emph{Generalized Power Series in Non-Standard Setting and Special Colombeau algebras of Generalized Functions}. Unpublished manuscript.
  \bibitem{vernaeve} H. Vernaeve. \emph{Ordered Fields}. Unpublished Manuscript.
  \bibitem{wade} W. R. Wade. \emph{An Introduction To Analysis}. Prentice Hall, fourth edition, 2010.
  \bibitem{waerden} B.L. van der Waerden. \emph{Algebra}. Springer, seventh edition, 1966.
\end{thebibliography}
\end{document}